\numberwithin{equation}{section}
\newtheoremstyle{mytheorem}
{10pt}% measure of space to leave above the theorem. E.g.: 3pt
{10pt}% measure of space to leave below the theorem. E.g.: 3pt
{\it}% name of font to use in the body of the theorem
{\parindent}% measure of space to indent
{\bf}% name of head font
{.}% punctuation between head and body
{ }% space after theorem head; " " = normal interword space
{\thmnumber{#2.~}\thmname{#1}\thmnote{~\rm#3}}% Manually specify head
\newtheoremstyle{myremark}
{10pt}% measure of space to leave above the theorem. E.g.: 3pt
{10pt}% measure of space to leave below the theorem. E.g.: 3pt
{\rm}% name of font to use in the body of the theorem
{\parindent}% measure of space to indent
{\bf}% name of head font
{}% punctuation between head and body
{ }% space after theorem head; " " = normal interword space
{\thmnumber{#2.~}\thmname{#1}\thmnote{~\rm#3}}% Manually specify head
\newtheoremstyle{myparagraph}
{10pt}% measure of space to leave above the theorem. E.g.: 3pt
{10pt}% measure of space to leave below the theorem. E.g.: 3pt
{\rm}% name of font to use in the body of the theorem
{\parindent}% measure of space to indent
{\bf}% name of head font
{.}% punctuation between head and body
{ }% space after theorem head; " " = normal interword space
{\thmnumber{#2.~}\thmname{#1}\thmnote{#3}}% Manually specify head
\theoremstyle{mytheorem}
\newtheorem{theorem}[subsubsection]{Theorem}
\newtheorem{definition}[subsubsection]{Definition}
\newtheorem{lemma}[subsubsection]{Lemma}
\newtheorem{corollary}[subsubsection]{Corollary}
\newtheorem{proposition}[subsubsection]{Proposition}
\theoremstyle{myremark}
\newtheorem{remark}[subsubsection]{Remark.}
\theoremstyle{myparagraph}
\newtheorem{parag}[subsubsection]{}
\newtheorem*{parag*}{}
    \def\@secnumfont{\sc}
    \def\section{\@startsection%
    {section}% name: section/subsection/etc.
    {1}% level: 1 for section/2 for subsection/etc.
    \z@{1.5\linespacing\@plus .2\linespacing}% vertical skip before
      {.7\linespacing}% vertical skip after
      {\normalfont\sc\centering}}% style
    \def\subsection{\@startsection{subsection}{2}
    \z@{1.0\linespacing\@plus .2\linespacing}% vertical skip before
      {.5\linespacing}% vertical skip after
      {\normalfont\it\centering}}% style
\renewenvironment{proof}[1][\proofname]{\par 
    \pushQED{\qed}% 
    \normalfont \topsep10\p@\@plus6\p@\relax 
    \trivlist 
    %\itemindent\normalparindent 
    \item[\hskip\labelsep 
    \bfseries 
    #1\@addpunct{.}]\ignorespaces}
    {\popQED\endtrivlist\@endpefalse} 
\providecommand{\proofname}{Proof}
\newlist{enumeraterem}{enumerate}{1}
\setlist[enumeraterem]{label=(\roman*), leftmargin=0pt, itemsep=3pt, itemindent=30pt}
\newlist{enumeratethm}{enumerate}{1}
\setlist[enumeratethm]{label={\rm(\roman*)}, leftmargin=30pt, itemsep=2pt}
\newcounter{cnstcnt}
\newcommand{\newr}{%
\refstepcounter{cnstcnt}%
\ensuremath{r_{\thecnstcnt}}}
\newcommand{\oldr}[1]{\ensuremath{r_{\ref{#1}}}}
\newcounter{const}
\newcommand{\newC}{\refstepcounter{const}\ensuremath{C_{\theconst}}}
\newcommand{\oldC}[1]{\ensuremath{C_{\ref{#1}}}}
\newcommand{\bfi}{\mathbf{i}}
\newcommand{\Haus}{\mathscr{H}}
\newcommand{\Leb}{\mathscr{L}}
\newcommand{\Gr}{\mathrm{Gr}}
\newcommand{\Span}{\mathrm{span}}
\newcommand{\Tan}{{\rm Tan}}
\newcommand{\Exp}{{\rm Exp}}
\newcommand{\de}{\mathrm{d}}
\newcommand{\bd}{\partial}
\newcommand{\Lip}{{\rm Lip}}
\newcommand{\cc}{Carnot-Carath\'eodory }
\DeclareMathOperator{\esterno}{\mbox{\large$\wedge$}}
\DeclareMathOperator{\trace}{\mbox{\Large$\llcorner$}}
\DeclareMathOperator{\antitrace}{\mbox{\kern-1.5pt\Large$\lrcorner$\kern.5pt}}
\newcommand{\scalar}[2]{\langle #1;\, #2\rangle}
\newcommand{\N}{\mathbb{N}}
\newcommand{\Q}{\mathbb{Q}}
\newcommand{\R}{\mathbb{R}}
\begin{document}

    % We do not use the maketitle command.
    % In the next lines we define the heading for the 
    % first page, then title, authors' names, and so on...
    % acknowledgements and affiliations are at the end of the paper

\thispagestyle{empty}
~\vskip -1.1 cm

	%
	% heading of first page
	%
% {\footnotesize\noindent 
% [version: \dataversione]
% \hfill %to apper on ******* \par
%\hfill DOI~\href{http://dx.doi.org/*******} \par
% }

\vspace{1.7 cm}

	%
	% title
	%
{\centering\Large\bf
Tangency sets of non-involutive distributions

\smallskip
and unrectifiability in Carnot-Carath\'eodory spaces
}

\vspace{.7 cm}

	%
	% authors' names
	%
{\centering\sc 
Giovanni Alberti, Annalisa Massaccesi, Andrea Merlo
\\
}

\vspace{.8 cm}

	%
	% abstract, keywords and MSC numbers
	%
{\rightskip 1 cm
\leftskip 1 cm
\parindent 0 pt
\footnotesize
{\sc Abstract.} In this paper, we establish refined versions of the Frobenius Theorem for non-involutive distributions and use these refinements to prove an unrectifiability result for Carnot–Carathéodory spaces. We also introduce a new class of metric spaces that extends the framework of Carnot–Carathéodory geometry and show that, within this class, Carnot–Carathéodory spaces are, in some sense, extremal. Our results provide new insights into the relationship between integrability, non-involutivity, and rectifiability in both classical and sub-Riemannian settings.

\par
\medskip\noindent
{\sc Keywords:} 
non-involutive distributions, 
Frobenius theorem, \cc spaces, rectifiability.
%integral currents, 
%normal currents,
%geometric property of the boundary.

\par
\medskip\noindent
{\sc 2020 Mathematics Subject Classification: 58A30, 53C17, 58A25, 35R03.} 
%58A30, 49Q15, 58A25, 53C17.
\par
}

\tableofcontents

\section{Introduction}
\label{sec:intro}

Involutivity is a concept deeply connected to rectifiability. A classical cornerstone of differential geometry is the Frobenius Theorem, which implies that if \(V\) is a distribution of \(k\)-dimensional planes on an open set \(\Omega \subset \R^n\), and if \(\Sigma\) is a \(k\)-dimensional smooth surface that is everywhere tangent to \(V\), then \(V\) must be involutive at every point of \(\Sigma\).  
Though classical, this is a powerful, very rigid connection between integrability—that is, the existence of a surface tangent to a given distribution—and involutivity, which is an algebraic condition on the derivatives of the distribution or differential relation.

On the other hand, a Lusin-type theorem proved by the first-named author in \cite{zbMATH00021945} shows that there exist \(C^1\)-regular surfaces tangent to non-involutive distributions on sets of positive surface measure. This result was later refined by Z. Balogh in \cite{Balogh2003SizeGradient}, who proved that such rectifiable sets can be taken to be of class \(\bigcap_{0<\alpha<1}C^{1,\alpha}\). This regularity is sharp, since the classical Frobenius theorem shows that if the surface were \(C^{1,1}\), then by the classical Lusin theorem the tangency set would be null.

{In the same spirit, one could also wonder if the same duality between the rigidity of Frobenius-type theorems and the flexibility of Lusin-type constructions applies in other frameworks, such as the realm of currents. Indeed, currents can be interpreted as weak surfaces with possible rectifiability properties. The answer is positive, as one can see in \cite{AMFrob_lincei,alberti2020geometric}.}

In recent years there has been a surge of interest in Carnot–Carathéodory spaces that arise naturally in the theory of partial differential equations and harmonic analysis, with particular attention to Carnot groups, which serve as the local model for these spaces. {Let us recall that a Carnot–Carathéodory space is intrinsically endowed with a non-involutive distribution, on which its intrinsic metric is based.}
% Correction: Changed "Let us remind" to "Let us recall" and inserted a hyphen in "non-involutive".

The study of geometric measure theory and rectifiability in Carnot groups was pioneered by the works of Ambrosio–Kirchheim \cite{AK00} and Franchi–Serapioni–Serra Cassano \cite{Serapioni2001RectifiabilityGroup}. Research in this area has been very active over the last two decades; see, e.g., \cite{MagnaniUnrect, step2, ambled, Mag13, MagnaniTowardArea, MatSerSC, JNGV20, DLDMV19, Vittone20, DDFO20, antonelli2020rectifiable2, antonelli2020rectifiableA, antonelli2020rectifiableB, Merlo_2021, MarstrandMattila20}.

One of the most intriguing aspects of Carnot groups is that they do not contain any low-codimensional Euclidean rectifiable sets. This is a well-known consequence of Pansu's differentiability theorem (see \cite{Pansu}). The main reason is that the Carnot–Carathéodory ball is forced to be squeezed in directions not tangent to the horizontal distribution, thereby preventing Lipschitz images of \(\R^m\) from spreading in those directions. As a result, Lipschitz images are forced to concentrate along the horizontal distribution, and this, on the one hand, gives an upper bound on the dimension of the images and, on the other, implies that they are \(C^2\)-rectifiable.

It is worth noting that in Carnot groups other notions of rectifiability are available (see \cite{Serapioni2001RectifiabilityGroup, MerloAntonelli,Merlo_2021,MarstrandMattila20, JNGV20, Pauls04}). These notions have almost nothing to do with the notion of Lipschitz continuity in the broadest sense of the word; however, it is possible to prove that Lipschitz images of Euclidean spaces are still the correct notion of regular surface in these very rough contexts (see \cite{AM20, IMM20, zbMATH07682699}).

This paper is devoted to highlighting the connections between the absence of rectifiable sets in sub-Riemannian geometry and the classical Frobenius Theorem.

\subsection{Main results}

This paper is ideally divided into three parts. The first part is devoted to refining the Frobenius Theorem for highly non-involutive distributions. We say that a \(k\)-dimensional distribution of planes is \emph{\(h\)-non-involutive} if every \(h\)-dimensional sub-distribution is non-involutive. The first result we obtain is of classical flavour.

\begin{parag}[Theorem. (Structure of tangency sets to \(C^{2}\) surfaces)]\label{th:main1.intro}
    Let \(2\leq h\leq k'\leq k<n\) and suppose \(V\) is an \(h\)-non-involutive \(k\)-dimensional distribution of class \(C^1\) in \(\R^n\) (in the sense of Definition \ref{def:hnoninv}). Let \(S\) be a submanifold of \(\R^n\) of dimension \(k'\) and of class \(C^{2}\). Then the set
    \[
    \mathscr{C}(S,V):=\{q\in S : \Tan(S,q) \subseteq V(q)\}
    \]
    is \((h-1)\)-rectifiable, where \(\Tan(S,q)\) denotes the classical tangent to the surface \(S\) at \(q\).
\end{parag}
% Corrections:
% - Changed "devoted to refine Frobenius Theorem" to "devoted to refining the Frobenius Theorem".
% - Moved "of dimension \(k'\)" to describe the submanifold \(S\) rather than \(V\).
% - Removed an extraneous comma before the parenthetical remark.
% - Added some spacing in the set definition for clarity.

Notice that the improvement on the classical Frobenius Theorem is twofold. If we do not impose any further hypothesis on \(V\) and we take \(h=k\), we see that we can infer more structure on the contact set: not only is it \(\Haus^k\)-null, but it is also \((k-1)\)-rectifiable. Secondly, if we have finer information on the distribution \(V\), i.e., that it is also \(h\)-non-involutive, then we infer that the tangency set is \((h-1)\)-rectifiable. Theorem \ref{th:main1.intro}, however, can be obtained with fairly standard techniques, and for our applications it is not strong enough. We were able to obtain the following result:

\begin{parag}[Theorem. (Structure of tangency sets to \(C^{1,1}\) surfaces)]\label{th:main2.intro}
    Let \(2\leq h\leq k'\leq k<n\) and suppose \(V\) is an \(h\)-non-involutive \(k\)-dimensional distribution of class \(C^1\) in \(\R^n\). Let \(S\) be a submanifold of \(\R^n\) of class \(C^{1,1}\) and of dimension \(k'\). Then \(\mathscr{C}(S,V)\) is \(h\)-purely unrectifiable.
\end{parag}
% Corrections:
% - Removed the extra phrase "of dimension \(k'\)" from the description of \(V\); instead, \(S\) is of dimension \(k'\).
% - Changed "purely-unrectifiable" to "purely unrectifiable" (without the hyphen) for consistency.
% - Added a clarifying phrase "result:" after the sentence "We were able to obtain the following result:".

The above result trades off the regularity of the surface, passing from \(C^{2}\)-regularity to \(C^{1,1}\) while losing the strong geometric information of Theorem \ref{th:main1.intro}. This second result is way more delicate than the previous one. The attentive reader might wonder if it is possible to improve on \eqref{th:main2.intro}; however, unfortunately, it is not possible. The reason is the following: with the same arguments employed by the first-named author in \cite{zbMATH00021945}, for every \(h\)-non-involutive \(k\)-distribution \(V\) and every \(s<n\) it is possible to construct surfaces of class \(C^{1,1}\) for which the tangency set \(\mathscr{C}(S,V)\) has Hausdorff dimension \(s\). This should signal that passing from \(C^2\) to \(C^{1,1}\) represents a very delicate endeavour, also in light of the results obtained by Z. Balogh in \cite{Balogh2003SizeGradient}.

An application of Theorem \ref{th:main2.intro} is the following structural result for \cc spaces.

\begin{theorem}\label{TH:UN.intro}
Suppose \(V\) is a smooth distribution of \(k\)-planes with the H\"ormander condition (see \S\ref{par:hor}). Let \(1<h\leq k\) be the smallest positive integer for which \(V\) is \(h\)-non-involutive. Suppose \(K\) is a compact subset of \(\R^m\) for some \(m\geq h\) and \(f\) is a Lipschitz map from \(K\), endowed with the Euclidean distance, to the \cc space \((\R^n,d_V)\), where \(d_V\) is the natural \cc metric induced by the distribution \(V\). Then
$\Haus^{m}_{d_V}(f(K))=0$.
\end{theorem}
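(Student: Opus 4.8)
The plan is to reduce the statement to Theorem \ref{th:main2.intro} by passing to the graph of $f$. First I would set up the ambient space: since $(\R^n,d_V)$ is locally bi-Lipschitz to a domain in Euclidean $\R^n$ with the Ball--Box distance replaced, and more importantly since $f:K\to(\R^n,d_V)$ is Lipschitz, the Ball--Box theorem (a consequence of the Hörmander condition) tells us that along any horizontal curve the displacement in the non-horizontal directions is controlled quadratically (or with higher weights) by the horizontal displacement. Concretely, I would argue that Lipschitz maps into $(\R^n,d_V)$ are differentiable in the sense that, after a Euclidean change of variables, $f$ agrees $\Leb^m$-a.e.\ — hence on all of $K$ up to an $\Haus^m_{d_V}$-null set — with a map whose image lies on a $C^{1,1}$ (in fact $C^{1,1}$ is the key regularity we can afford) submanifold $S$ tangent to the distribution $V$. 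The mechanism is: the metric $d_V$ forces $f$ to have vanishing ``vertical derivative,'' so that the Euclidean Lipschitz extension of $f$ (which exists since $d_V\gtrsim$ a power of the Euclidean distance locally, so $f$ is Euclidean-continuous, and one uses Kirszbraun together with the structure of $d_V$) produces, via Whitney-type arguments, a surface $S$ of dimension $m$ and class $C^{1,1}$ with $f(K)\subseteq S$ up to a $d_V$-null set, and with $\Tan(S,q)\subseteq V(q)$ for $q$ in the image.

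The second step is to invoke Theorem \ref{th:main2.intro}. Since $V$ is smooth (hence $C^1$) with the Hörmander condition, and $h$ is by hypothesis the smallest integer with $V$ being $h$-non-involutive — note $h>1$ because the Hörmander condition forbids $V$ itself, let alone a $1$-dimensional subdistribution, from being involutive on an open set, but one needs $h\ge 2$ which is exactly $1<h$ — Theorem \ref{th:main2.intro} applies with $k'=m$ (one may take $m\le k$ after the reduction; if $m>k$ the image has empty interior in $S$ for trivial dimensional reasons and one can reduce to $m=k$, or simply note $S$ has dimension $\min(m, \text{something})$ and re-run the argument). It gives that the contact set $\mathscr{C}(S,V)$ is $h$-purely unrectifiable. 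Since $m\ge h$, an $h$-purely unrectifiable set has $\Haus^m$-measure zero when $m \ge h$... — more precisely, $h$-pure unrectifiability gives $\Haus^h$-a.e.\ the set meets every $C^1$ $h$-surface in a null set, and combined with the fact that $f(K)$, being a Lipschitz image of $K\subseteq\R^m$ into a metric space that is locally a power-quasimetric perturbation of Euclidean, is contained in $\mathscr{C}(S,V)$, we get $\Haus^m_{d_V}(f(K)) = 0$. Here one uses that on $S$ the metric $d_V$ restricted to the $(h-1)$-unrectifiable part degenerates, and that the $\Haus^m_{d_V}$ measure of a Lipschitz image is controlled by $\Haus^m$ of the domain times a density that vanishes precisely on the non-involutive contact set.

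The cleanest route for the last implication: $\Haus^m_{d_V}(f(K))\le C\cdot\Haus^m_{\mathrm{eucl}}(\pi(f(K)))$ where $\pi$ is a coordinate projection, fails directly, so instead I would argue by the area-formula philosophy — decompose $K$ into the set where $f$ is ``Euclidean-differentiable with rank $m$'' and the rest; on the rest $\Leb^m$ is null so $\Haus^m_{d_V}$ of the image is null by Lipschitzianity; on the former, the image lies on the $C^{1,1}$ surface $S$ and every point of $f(K)$ there is a tangency point, so $f(K)\subseteq\mathscr{C}(S,V)$, which is $h$-purely unrectifiable, hence (being also $m$-rectifiable as a Lipschitz image, since $m\ge h$) it must be $\Haus^m_{d_V}$-null. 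The main obstacle I expect is precisely this reduction step — proving that a $d_V$-Lipschitz map from a Euclidean cube lands, up to a null set, on a $C^{1,1}$ surface tangent to $V$: this requires a careful Pansu-type differentiation argument adapted to show $C^{1,1}$ (not merely $C^1$) regularity of the limiting surface, using the second-order control coming from the Ball--Box estimates. Everything after that is a bookkeeping argument combining Theorem \ref{th:main2.intro} with the area formula.
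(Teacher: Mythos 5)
Your plan is recognisably the same in outline as the paper's: reduce to bi-Lipschitz pieces, show that at differentiability points the image of $\mathrm{d}f$ lies in $V(f(x))$, package $f(K)$ (locally) as a subset of the contact set $\mathscr{C}(S,V)$ of a $C^{1,1}$ surface $S$ tangent to $V$, and then invoke Theorem~\ref{th:main2.intro}. The paper does exactly this (via Proposition~\ref{prop:coni} for the quadratic tangency and Proposition~\ref{whitney} for the Whitney $C^{1,1}$ extension), so you have correctly identified the right reduction.

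However there is a genuine gap at the very end, and it is precisely the step you identify as ``a bookkeeping argument.'' All that Theorem~\ref{th:main2.intro} together with the Fubini-slicing argument yields is that $f(K)$ is null for the \emph{Euclidean} measure $\Haus^m$. This does \emph{not} give $\Haus^m_{d_V}(f(K))=0$, because $d_V\geq d_{\mathrm{eucl}}$ (any horizontal curve has at least Euclidean length), whence $\Haus^m_{d_V}\geq \Haus^m_{\mathrm{eucl}}$ as measures: the inequality runs the wrong way. A Euclidean ball $U(z,\rho)$ has $d_V$-diameter of order $\rho^{1/\mathrm{step}}$, so a cover with small $\sum\rho_l^m$ gives no control on $\sum(\mathrm{diam}_{d_V})^m$. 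Your statement ``hence (being also $m$-rectifiable as a Lipschitz image, since $m\ge h$) it must be $\Haus^m_{d_V}$-null'' is not justified, and the vague follow-up about ``a density that vanishes on the contact set'' does not constitute an argument. The paper closes this gap with a dedicated covering argument in the final paragraph of the proof of Theorem~\ref{TH:UN}: one chooses Euclidean balls $U(z_l,\rho_l)$ with $\sum\rho_l^m\le\epsilon$ centred on $f(K)$, and then observes that $f(K)\cap U(z_l,\rho_l)$ is trapped inside the anisotropic cone $\Exp_{z_l}(X(\Lambda))$ by Proposition~\ref{prop:coni}; Lemma~\ref{lem:palle} then shows that this intersection sits inside a \emph{$d_V$-ball} of radius comparable to $\rho_l$, which gives the required bound $\sum(\mathrm{diam}_{d_V})^m\lesssim\epsilon$. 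This cone-cutting step is the crux of the promotion to the intrinsic measure and must appear explicitly. Separately, your handling of the case $m>k$ (``reduce to $m=k$'') is too loose: the paper instead observes that $\mathrm{rk}(\mathrm{d}f)\le k<m$ almost everywhere and applies the metric area formula (via the cited \cite[Theorem~5.3]{HAL}) directly to get $\Haus^m_{d_V}(f(K))=0$, which is the correct mechanism; one cannot simply ``reduce to $m=k$.''
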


Theorem \ref{TH:UN.intro} not only tells us that all \cc spaces are \(k\)-purely unrectifiable, where we recall that \(k\) is the dimension of the horizontal distribution \(V\), but if the distribution is \(h\)-non-involutive then the \cc space \((\R^n,d_V)\) does not contain any non-null \(h\)-rectifiable sets. Besides extending to all \cc spaces the well-known results of pure unrectifiability of Carnot groups (see, e.g., \cite{AK00,MagnaniUnrect}), the strategy of the proof of Theorem \ref{TH:UN.intro} is also completely novel. For the sake of simplicity we will stick to discussing the proof in the case where the distribution \(V\) is of step 2, i.e., if the H\"ormander condition is satisfied with just the first commutators of the vectors spanning \(V\). Thanks to the Ball-Box lemma (see Proposition \ref{BallBox}), we prove that Lipschitz images of compact sets of \(\R^m\) must have an \emph{osculating paraboloid}. This shows that these images are contained in the countable union of \(C^2\) \(k\)-dimensional surfaces. By Theorem \ref{th:main2.intro} this implies that these sets are \(\Haus^m\)-null, where here \(\Haus^m\) stands for the Euclidean Hausdorff measure. In order to promote this nullness to the \emph{intrinsic Hausdorff measure} \(\Haus^m_{d_V}\), we will perform a delicate cut of the coverings of the sets for which the \(\Haus^m\) premeasure is small (see the proof of Theorem \ref{TH:UN}).

In order to complete our analysis of the relationship between non-involutivity and \cc spaces, we introduce the concept of an \(\eta\)-squeezed metric. Let \(V\) be a distribution of \(k\)-planes in \(\R^n\). A metric \(d\) on \(\R^n\) is said to be \(\eta\)-squeezed, with \(\eta\in [1,2]\), if the metric balls \(B(x,r)\) (relative to \(d\)) are contained in cylinders with a base of radius \(r>0\) and height of order \(r^\eta\) (see Definition \ref{def:squmet} for a precise definition). This constraint on the shape of the ball is a substitute for the Ball-Box lemma, and indeed we are able to prove that all step-2 distributions are \(2\)-squeezed (see Proposition \ref{inclusioncc}). For a fixed \(V\) and \(\eta\), these metrics are all locally bi-Lipschitz equivalent, and we build a variant of the \cc distance (see Definition \ref{dVmtr}) that is \(\eta\)-squeezed. Moreover, we prove that the dependence on \(\eta\) is continuous (see \cref{limit}). Interestingly, then the following result holds.

\begin{proposition}
If the metric \(d\) is \(\eta\)-squeezed on \(V\) for some \(\eta\in (1,2)\), then there exists a Lipschitz function $f:K\to (\mathbb{R}^n,d)$
with \(K\Subset \mathbb{R}^k\), such that
\[
\Haus^k_{d}\bigl(f(K)\bigr)>0.
\]
On the other hand, if \(\eta=2\) and \(V\) is non-involutive, then the metric space \((\mathbb{R}^n,d)\) is \(k\)-purely unrectifiable.
\end{proposition}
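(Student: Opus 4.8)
The two halves of the proposition are independent, so I would organize the proof accordingly.

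\textbf{Part 1: construction of a Lipschitz image of positive measure when $\eta\in(1,2)$.} The idea is that an $\eta$-squeezed metric with $\eta<2$ gives the metric ball enough room in the non-horizontal directions that a Hölder-type surface can still spread positively. Concretely, I would adapt the Lusin-type construction of the first-named author from \cite{zbMATH00021945}: build $f\colon K\to\R^n$ as a uniform limit of surfaces $f_j$ that are piecewise affine on finer and finer dyadic grids, at each stage "tilting" the pieces so that on scale $r$ the graph moves by $O(r)$ horizontally but only $O(r^{\beta})$ transversally, where one picks an exponent $\beta$ with $1<\beta<\eta$. Since $\eta$-squeezedness says $B_d(x,r)$ sits inside a cylinder of radius $r$ and height $\sim r^\eta$, the transversal displacement $r^\beta$ is much smaller than $r^\eta$ times... wait, the point is the reverse: we need $d(f(x),f(y))\lesssim|x-y|$, and $d$ between two points at Euclidean horizontal distance $\rho$ and vertical distance $\rho^\beta$ is comparable to $\rho$ precisely because $\rho^\beta \gg \rho^\eta$ means such a pair lies \emph{well inside} a $d$-ball of radius $\rho$ (the height budget $\rho^\eta$ is larger than the actual vertical displacement $\rho^\beta$ only if $\beta>\eta$, so in fact one wants $\beta>\eta$; choose $\eta<\beta<2$). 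With that choice the map is Lipschitz from $(K,|\cdot|)$ to $(\R^n,d)$, while the projection of $f(K)$ to the horizontal $\R^k$ covers a set of positive $\Leb^k$-measure by a standard area/Jacobian estimate, and since the horizontal projection is $1$-Lipschitz for $d$ (balls have horizontal radius $r$), this forces $\Haus^k_d(f(K))>0$. I would quote \cite{zbMATH00021945,Balogh2003SizeGradient} for the surface construction and only verify the two metric estimates, which are where $\eta<2$ and $\eta<\beta<2$ enter.

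\textbf{Part 2: pure unrectifiability when $\eta=2$ and $V$ non-involutive.} Here I would mirror the strategy used for Theorem \ref{TH:UN.intro} in the step-$2$ \cc case. Suppose $f\colon K\to(\R^n,d)$ is Lipschitz with $K\Subset\R^k$. Because $d$ is $2$-squeezed, the image $f(K)$ has, at every point and every scale $r$, a transversal "thickness" of order $r^2$ relative to the horizontal $r$-ball: this is exactly the quantitative input that lets one run the osculating-paraboloid argument, showing that $f(K)$ is contained in a countable union of $k$-dimensional surfaces of class $C^2$ (or at least $C^{1,1}$) tangent to $V$. Then Theorem \ref{th:main2.intro}, applied with $k'=k$ and with $h$ the non-involutivity index (here $h\le k$, and $V$ is $k$-non-involutive by hypothesis so we may take any $2\le h\le k$), tells us that each contact set $\mathscr C(S,V)$ is $h$-purely unrectifiable; in particular $f(K)$, being a Lipschitz image of a $k$-dimensional set contained in such contact sets, must be $\Haus^k$-null in the Euclidean sense. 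Finally one promotes Euclidean $\Haus^k$-nullity to $\Haus^k_d$-nullity by the same covering-refinement argument indicated for Theorem \ref{TH:UN} (cutting coverings with small $\Haus^k$-premeasure and rebuilding them in the $d$-metric using $2$-squeezedness): this gives $\Haus^k_d(f(K))=0$, i.e.\ $(\R^n,d)$ is $k$-purely unrectifiable.

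\textbf{Main obstacle.} The delicate point is Part 1: one must produce a surface that is simultaneously Lipschitz for the squeezed metric and has positive-measure horizontal shadow, and the tension is precisely between the two metric estimates — the construction must keep the transversal oscillation below the $r^2$ height budget at \emph{every} scale (so that $d\lesssim$ Euclidean horizontal distance) while keeping the horizontal spreading nondegenerate. For $\eta=2$ these two requirements collide (there is no room, which is exactly why $\eta=2$ forces unrectifiability), so the construction genuinely uses the slack $\eta<2$ and must be calibrated carefully, following \cite{zbMATH00021945} but tracking the dependence of all constants on $2-\eta$. In Part 2 the only non-routine step is the promotion of the nullity to the intrinsic measure, which is handled exactly as in the proof of Theorem \ref{TH:UN}.
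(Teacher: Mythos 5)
Your proposal is essentially the same approach as the paper's, in both halves. For Part~1 the paper does not rebuild the multiscale dyadic surface by hand but instead invokes a packaged Lusin-type lemma (Proposition~\ref{lusin}, quoted from \cite{AMMFrobenius3}) that directly yields, near a point $z$ with $V(z)=\R^k$, a function $f$ of class $C^{1,\alpha}$ for every $\alpha<1$ whose graph is tangent to $V$ on a compact set of positive $\Leb^k$-measure; the Lipschitz estimate for $d$ is then exactly the one you isolate (transversal displacement $O(r^{1+\alpha})$ stays below the $r^\eta$ height budget once $\alpha>\eta-1$, possible because $\eta<2$), so your self-corrected "$\eta<\beta<2$" is the right relation. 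For Part~2 the paper likewise argues as in Proposition~\ref{whitney} to get $C^{1,1}$-rectifiability and then closes with a tangency-nullity theorem — the paper cites Balogh~\cite{Balogh2011SizeOT} while you quote the paper's own Theorem~\ref{th:main2.intro}, which is a cleaner, more self-contained choice here; you also spell out the covering-refinement step needed to upgrade Euclidean $\Haus^k$-nullity to $\Haus^k_d$-nullity (a step the paper silently absorbs into "this implies"), which is indeed required since $d$ dominates the Euclidean distance so nullity does not transfer for free.
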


This proposition, together with the above discussion, shows that \cc spaces are extremal in the following sense. As remarked above, we show that \cc spaces of step 2 are Gromov-Hausdorff limits of metric \(\eta\)-squeezed metric spaces that contain \(k\)-rectifiable sets (where \(k\) is the dimension of \(V\)), and whose balls are cylinders with a base of radius \(r\) and height \(r^\eta\). While these spaces contain \(k\)-rectifiable sets for every \(\eta<2\), in the critical case \(\eta=2\) (with \(V\) non-involutive) these rectifiable objects disappear in the limit. This highlights the fact that the exponent \(2\) in the Ball-Box lemma is crucial in obtaining the geometric properties of \cc spaces.

{\small
\begin{parag*}[Acknowledgements]
Part of this research was carried out during visits
of the second and third author at the Mathematics Department in Pisa.
These were supported by the University of Pisa through 
the 2015 PRA Grant ``Variational methods for geometric problems'', by the 2018 INdAM-GNAMPA 
project ``Geometric Measure Theoretical approaches to Optimal Networks'' and by PRIN project MUR 2022PJ9EFL ``Geometric Measure Theory: Structure of Singular Measures, Regularity Theory and Applications in the Calculus of Variations''.
   
The research of G.A. has been partially supported
by the Italian Ministry of Education, University and Research (MIUR) 
through the PRIN project 2010A2TFX2
%``Calcolo delle Variazioni''. 
The research of A. Ma has received funding from the European Union's Horizon 2020 research and innovation programme under the grant agreement No. 752018 (CuMiN), STARS@unipd research grant ``QuASAR -- Questions About Structure And Regularity of currents'' (MASS STARS MUR22 01) and INdAM project ``VAC\&GMT''.  The research of A. Me has has received funding from the European Union's Horizon 2020 research and innovation programme under the Marie Sk\l odowska-Curie grant agreement no 101065346.
%``Currents and Minimizing Networks''.
\end{parag*}
}

\section{Notations and preliminaries}

\subsection{Notation}

In this section we need introduce some frequent notations used throughout the paper.

\begin{itemize}
[leftmargin=50pt, itemsep=4 pt plus 1pt, labelsep=10pt]
\item[$I(n,k)$]
set of all multi-indices $\bfi:=(i_1,\dots,i_k)$ 
with $1\le i_1<\dots<i_k\le n$;

\item[$\esterno_k(V)$]
space of $k$-vectors in a linear space $V$;  
the canonical basis of $\esterno_k(\R^n)$ is formed by the 
simple $k$-vectors $e_\bfi:=e_{i_1}\wedge\dots\wedge e_{i_k}$ 
with $\bfi\in I(n,k)$, where $\{e_1,\dots,e_n\}$
is the canonical basis of $\R^n$;
$\esterno_k(\R^n)$ is endowed with the Euclidean
norm $|\cdot|$ associated to this basis;

\item[$\esterno^k(V)$]
space of $k$-covectors on a linear space $V$; 
the canonical basis of $\esterno^k(\R^n)$ is formed by the 
simple $k$-covectors 
$\de x_\bfi:=\de x_{i_1}\wedge\dots\wedge \de x_{i_k}$ 
with $\bfi\in I(n,k)$, where $\{\de x_1,\dots, \de x_n\}$
is the canonical basis of the dual of $\R^n$;
$\esterno^k(\R^n)$ is endowed with Euclidean norm $|\cdot|$
associated to this basis;

\item[$\de x$]
$:= \de x_1\wedge\dots\wedge \de x_n$;

\item[$\wedge$]
exterior product of $k$-vectors, or of $h$-covectors;

\item[$ \trace$]

interior products of a $k$-vector and a $h$-covector
(\S\ref{def:int_prod});

\item[$\de$]
exterior derivative of a $k$-form (\S\ref{def:diffop}); 
%differential of a map;

\item[$\mathrm{div}$]
divergence of a $1$-vectorfield (\S\ref{def:diffop});

\item[{$[v,v']$}]
Lie bracket of vector fields $v$ and $v'$ (\S\ref{def:lie});

\item[{$\Haus^m$}] $m$-dimensional Hausdorff measure;

\item[{$\mathrm{Exp}_x$}] Exponential map based at $x$ (\S\ref{par:BBT});
\end{itemize}

Throughout this paper, we adopt Federer's convention, denoting closed balls by \(B(x,r)\) and open balls by \(U(x,r)\) in a metric space.

\subsection{Multilinear algebra and differential calculus of forms}

\begin{parag}[Interior product]
\label{def:int_prod}
Given a $k$-vector $v$ in $V$ and an $h$-covector 
$\alpha$ on $V$ with $h\le k$, the \emph{interior product} 
$v\trace\alpha$ is the $(k-h)$-vector in $V$ defined by
\[
\scalar{v\trace\alpha}{\beta}:=\scalar{v}{\alpha\wedge\beta}
\quad\text{for every $(k-h)$-covector $\beta$.}
\]
\end{parag}

\begin{parag}[Forms]
Let $k\in\{1,\ldots,n-1\}$.
We view $k$-forms as maps $\omega:\R^n\to\esterno^k(\R^n)$, 
which we sometime write in terms of the canonical basis of
$\esterno^k(\R^n)$, that is
\[
\omega(x) = \sum_{\bfi\in I(n,k)} \omega_\bfi(x) \, \de x_\bfi
\, .
\]
\end{parag}

\begin{parag}[Distributions of $\boldsymbol k$-planes]
\label{def:distrib}
A \emph{$k$-dimensional distribution of planes} in $\R^n$ is a map  $V:\R^n\to \Gr(n,k)$, where as usual with the symbol $\Gr(n,k)$ we denote the Grassmannian of $k$-dimensional planes of $\R^n$. 
We say that a \emph{system of vector fields}  $\mathcal{X}:=\{X_1,\dots, X_k\}$, where $k\in \N$,
\emph{spans} the distribution $V$ if, for every $x\in \R^n$, one has
\[
V(x)
=\Span(\mathcal{X}(x)) :=\Span\big\{ X_1(x),\dots,X_k(x) \big\}
\, .
\] 
Let $r\in \N$. The distribution $V$ is said to be of class $C^r$ if there is a system of vector fields of class $C^r$ that span $V(x)$ at every $x\in \R^n$. Finally, we say that a vector field $Y$ is tangent to the distribution $V$ if $Y(x)\in V(x)$ for every $x\in \R^n$ and if $\mathcal{W}$ is a system of vector fields, we say that $\mathcal{W}$ is tangent to $V$ if each of its vector fields is tangent to $V$.
\end{parag}

% \begin{parag}[Restriction of forms to distributions] Let $k\leq \ell$ and suppose $W$ is a distribution of $\ell$-planes. For any $k$-form $\omega$ we define the restriction $\omega\rvert_W$ of $\omega$ to the distribution $W$ at the point $x$ as the $k$-form on $W(x)$ that coincide with the action of $\omega$ on $W(x)$. 
% % the $k$-form that acts as
% % $$\omega_x\vert_{W(x)}(u_1\wedge\ldots\wedge u_k):=\omega_x(\pi_{W(x)}(u_1)\wedge\ldots\wedge\pi_{W(x)}(u_k)),$$
% % where $\pi_{W(x)}$ is the orthogonal projection on $W(x)$. 
% \footnote{AMe: E' chiaro a tutti cosa vuol dire?}
% \end{parag}

\begin{parag}[Restriction of forms to distributions]
    Let \( k \leq \ell \) and let \( W \) be a distribution of \(\ell\)-planes. For every \( k \)-form \(\omega\) on \( M \), we define the \emph{restriction} of \(\omega\) to \( W \), denoted by \(\omega|_W\), as the \( k \)-form on \( W \) given at each point \( x \in M \) by
\[
(\omega|_W)_x(v_1, \dots, v_k) := \omega_x(v_1, \dots, v_k)\qquad\text{for all \( v_1, \dots, v_k \in W(x) \).}
\]
\end{parag}

\begin{parag}[Distributions as the kernel of forms] 
Let $r\in \N$ and $V$ be a $k$-dimensional distribution in $\R^n$ of class $C^r$. It is immediate to see that $V(x)$ can be identified with the intersection of the kernel of $(n-k)$ independent $1$-forms $\omega_1,\ldots,\omega_{n-k}$ of class $C^r$.   
\end{parag}

\begin{parag}[Lie brackets of vector fields]
\label{def:lie}
Recall that, given two vector fields $X$, $X'$ on $\R^n$ of class $C^1$, 
the Lie bracket $[X,X']$ is the vector field on $\R^n$ 
defined by
\[
[X,X'](x)
:= \frac{\bd X}{\bd X'}(x) - \frac{\bd X'}{\bd X}(x)
= \mathrm{d}_x X \, (X'(x)) - \mathrm{d}_x X' \, (X(x))
\, , 
\]
where $\mathrm{d}_x X$ and $\mathrm{d}_x X'$ stand for the differentials of $X$ and $X'$
at the point $x$, viewed as linear maps from $\R^n$ into itself.

    Given a system of $k$ smooth vector fields $\mathcal{X}:=\{X_1,\dots, X_k\}$ on $\R^n$, we say that a vector field $Y$ is an \emph{elementary commutator of $\mathcal{X}$} if there are  $X_{i_1},\ldots,Y_{i_N}\in\mathcal{X}$ such that
    \begin{equation}
        Y=[X_{i_1},[\ldots,[X_{i_{N-1}},Y_{i_N}]\ldots]].
        \nonumber
    \end{equation}
In addition, if a vector field $Y$ is an elementary commutator we will denote by $\mathrm{deg}(Y)$, \emph{the degree of $Y$}, the number
$$\min\big\{N:\text{there are }Y_1,\ldots,Y_N\in\mathcal{X}\text{ such that } Y=[X_{i_1},[\ldots,[X_{i_{N-1}},Y_{i_N}]\ldots]]\big\}.$$
\end{parag}

\begin{parag}[Exterior derivative and divergence]
\label{def:diffop}
If $\omega$ is a $k$-form of class $C^1$, 
the \emph{exterior derivative} $\de\omega$ is the $(k+1)$-form 
defined in coordinates by the usual formula:
\begin{equation}
\label{e:diff}
\de\omega(x)
:=\sum_{\bfi\in I(n,k)} \sum_{j=1}^n
   \frac{\bd\omega_\bfi}{\bd x_j}(x) \, \de x_j\wedge\de x_\bfi
= \sum_{j=1}^n \de x_j \wedge \frac{\bd\omega}{\bd x_j}(x)
\, .
\end{equation}
Given a vector field $X$ of class $C^1$ on $\R^n$ we define as usual
$$\mathrm{div}X:=\sum_{i=1}^n\frac{\partial X_i}{\partial x_i}.$$
\end{parag}

\subsection{Notions of involutivity}

\begin{parag}[Non-involutivity of a $\boldsymbol{k}$-dimensional distribution $\boldsymbol{V}$]
\label{def:inv}
Let $V$ be a distribution of planes
of class $C^1$ in $\R^n$.
We say that $V$ is \emph{non-involutive} at a point $x$ if
there exists a couple of vector fields $X$, $X'$ of class $C^1$ which
are tangent to $V$ whose
the commutator $[X,X'](x)$ \emph{does not belongs to $V(x)$}.
We say that $V$ is \emph{non-involutive} (respectively \emph{involutive}) if it is non-involutive (respectively involutive) at every point of $\R^n$.
\end{parag}

\begin{lemma}
  \label{prop:iddifferenziale}
    Let $\omega$ be a $1$-form and $X,Y$ be vector fields. Then 
$$-\mathrm{div}(X\wedge Y\trace \omega)=\mathrm{d}\omega(X\wedge Y)+\omega(X)\mathrm{div}(Y)-\omega(Y)\mathrm{div}(X)-\omega([X,Y]).$$
\end{lemma}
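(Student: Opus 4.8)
The plan is to verify the identity by a direct computation using coordinates, exploiting the second formula for the exterior derivative in \eqref{e:diff}, namely $\de\omega = \sum_j \de x_j \wedge \partial\omega/\partial x_j$, together with the coordinate expression for the interior product. Write $\omega=\sum_i \omega_i\,\de x_i$, $X=\sum_i X_i\,\partial_{x_i}$ and $Y=\sum_i Y_i\,\partial_{x_i}$. The bivector field is $X\wedge Y = \sum_{i<j}(X_iY_j-X_jY_i)\,e_i\wedge e_j$, so its interior product with $\omega$ is the vector field $X\wedge Y\trace\omega = \sum_{i,j}(X_iY_j)\,\omega_j\, e_i \cdot (\text{with sign bookkeeping})$; more cleanly, $X\wedge Y\trace\omega = \omega(Y)\,X - \omega(X)\,Y$ by the defining property $\scalar{X\wedge Y\trace\omega}{\beta}=\scalar{X\wedge Y}{\omega\wedge\beta}$ applied to an arbitrary $1$-covector $\beta$. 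This is the key algebraic simplification and should be recorded first.

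Once $X\wedge Y\trace\omega = \omega(Y)\,X - \omega(X)\,Y$ is established, the left-hand side becomes $-\mathrm{div}\big(\omega(Y)\,X - \omega(X)\,Y\big)$. Now I would apply the Leibniz rule for the divergence of a scalar times a vector field, $\mathrm{div}(fX) = f\,\mathrm{div}(X) + X(f) = f\,\mathrm{div}(X) + \de f(X)$, to each of the two terms. This yields
\[
-\mathrm{div}(X\wedge Y\trace\omega) = -\omega(Y)\,\mathrm{div}(X) - X(\omega(Y)) + \omega(X)\,\mathrm{div}(Y) + Y(\omega(X)).
\]
It remains to match the two directional-derivative terms $-X(\omega(Y)) + Y(\omega(X))$ against $\de\omega(X\wedge Y) - \omega([X,Y])$ on the right-hand side. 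For this I would use the intrinsic (Cartan) formula $\de\omega(X,Y) = X(\omega(Y)) - Y(\omega(X)) - \omega([X,Y])$, which is exactly the classical identity for the exterior derivative of a $1$-form evaluated on two vector fields; equivalently one can derive it in coordinates from \eqref{e:diff} by expanding $\de\omega(X\wedge Y) = \sum_{j}\sum_i (\partial\omega_i/\partial x_j)(\de x_j\wedge\de x_i)(X\wedge Y)$ and regrouping, together with the coordinate expression $\omega([X,Y]) = \sum_i \omega_i\big(X(Y_i) - Y(X_i)\big)$. Substituting $X(\omega(Y)) - Y(\omega(X)) = \de\omega(X,Y) + \omega([X,Y])$ into the displayed expression gives precisely $\de\omega(X\wedge Y) + \omega(X)\,\mathrm{div}(Y) - \omega(Y)\,\mathrm{div}(X) - \omega([X,Y])$, which is the claimed right-hand side.

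The only mild subtlety, and the place to be careful, is sign and orientation bookkeeping: one must fix the convention that $\de\omega(X\wedge Y) = \de\omega(X,Y)$ (antisymmetric evaluation, consistent with the pairing of $2$-vectors and $2$-covectors used in the paper), and make sure the interior product convention $\scalar{v\trace\alpha}{\beta} = \scalar{v}{\alpha\wedge\beta}$ from \S\ref{def:int_prod} produces $\omega(Y)X - \omega(X)Y$ rather than its negative. There is no real obstacle here — the whole proof is a routine unwinding of definitions — so the "main obstacle" is simply getting every sign right; once the identity $X\wedge Y\trace\omega = \omega(Y)X - \omega(X)Y$ and the Cartan formula for $\de\omega$ on a $1$-form are in hand, the rest is the Leibniz rule for divergence and collecting terms. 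I would present the proof in exactly that order: (i) compute the interior product, (ii) expand the divergence via Leibniz, (iii) invoke the Cartan formula to rewrite the derivative terms, (iv) collect.
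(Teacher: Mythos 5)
Your route is genuinely different from the paper's: the paper grinds through the identity purely in coordinates, whereas you first isolate the algebraic identity for the interior product, then reduce everything to the Leibniz rule for $\mathrm{div}$ and the Cartan formula. Conceptually this is cleaner, and it is a legitimate alternative proof. But there is a concrete sign error at the crucial first step. With the paper's convention $\scalar{v\trace\alpha}{\beta}=\scalar{v}{\alpha\wedge\beta}$, one finds for any $1$-covector $\beta$
\[
\scalar{X\wedge Y\trace\omega}{\beta}
 \;=\; \scalar{X\wedge Y}{\omega\wedge\beta}
 \;=\; \omega(X)\,\beta(Y) - \omega(Y)\,\beta(X)
 \;=\; \beta\bigl(\omega(X)\,Y - \omega(Y)\,X\bigr),
\]
so $X\wedge Y\trace\omega = \omega(X)\,Y - \omega(Y)\,X$, \emph{not} $\omega(Y)\,X - \omega(X)\,Y$ as you write. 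Indeed the paper itself records the correct formula (as $X\wedge Y\llcorner\omega = \omega(X)Y-\omega(Y)X$) inside the proof of Proposition~\ref{prop:iddifferenziale:utile}. This is not a harmless convention choice: your subsequent steps are tied to your (wrong) sign, and if you carry out the Leibniz/Cartan substitution you actually land on $-\de\omega(X\wedge Y)+\omega(X)\mathrm{div}(Y)-\omega(Y)\mathrm{div}(X)-\omega([X,Y])$, with the wrong sign on the exterior-derivative term, so the claimed conclusion does not follow from your own intermediate displayed expression.

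Two further points you should be aware of if you redo the computation carefully. First, the paper's Lie bracket is defined by $[X,X'] = \de_xX(X') - \de_xX'(X)$, which is the \emph{negative} of the usual commutator of derivations; the textbook Cartan identity $\de\omega(X,Y) = X(\omega(Y)) - Y(\omega(X)) - \omega([X,Y])$ uses the usual convention, so a sign has to be flipped when translating. Second, if you push the clean (corrected) computation all the way through, you will find that the divergence terms come out as $\omega(Y)\mathrm{div}(X) - \omega(X)\mathrm{div}(Y)$ rather than $\omega(X)\mathrm{div}(Y) - \omega(Y)\mathrm{div}(X)$ as printed in the lemma statement — a discrepancy one can confirm with the test case $n=2$, $\omega = \de x_2$, $X = x_1\partial_1$, $Y = x_2\partial_2$, where the left-hand side equals $x_2$ but the printed right-hand side equals $-x_2$. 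This has no bearing on the paper's use of the lemma, since in Proposition~\ref{prop:iddifferenziale:utile} one has $\omega(X)=\omega(Y)=0$ so the divergence terms drop out, but it does mean your goal of reproducing the printed right-hand side exactly was not a sound target. Fix the interior-product sign, track the bracket convention, and your structure (interior product identity, Leibniz for $\mathrm{div}(fX)$, Cartan formula) gives a correct and tidy proof of the identity in the form in which it is actually used.
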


\begin{proof}
     The action of $d\omega$ on the $2$-vector $X\wedge Y$ can be written in coordinates as 
$$\mathrm{d}\omega(X\wedge Y)=\sum_{ i,j}(\partial_i\omega_j-\partial_j\omega_i)X_iY_j.$$
Leibniz's rule tells us that 
\begin{equation}
    \begin{split}
\partial_i(\omega_jX_iY_j)=\partial_i\omega_jX_iY_j+\omega_j\partial_iX_iY_j+\omega_jX_i\partial_iY_j,\\
\partial_j(\omega_iX_iY_j)=\partial_j\omega_iX_iY_j+\omega_i\partial_jX_iY_j+\omega_iX_i\partial_jY_j.
    \end{split}
\end{equation}
Subtracting the two expressions above and summing them over the indexes $i,j$, we infer that 
\begin{equation}
    \begin{split}
        \sum_{i,j} \partial_i(\omega_jX_iY_j)-&\partial_j(\omega_iX_iY_j)=d\omega(X\wedge Y)+\\
        +&\sum_{i,j}(\omega_j\partial_iX_iY_j-\omega_iX_i\partial_jY_j)+\sum_{i,j}(\omega_jX_i\partial_iY_j-\omega_i\partial_jX_iY_j)\\
        =&\mathrm{d}\omega(X\wedge Y)+\omega(X)\mathrm{div}(Y)-\omega(Y)\mathrm{div}(X)-\omega([X,Y]).
        \label{id:diff1}
    \end{split}
\end{equation}
Finally, we are left to study the left-hand side of \eqref{id:diff1}. Note that 
\begin{equation}
    \begin{split}
        \sum_{i,j} \partial_i(\omega_jX_iY_j)-\partial_j(\omega_iX_iY_j)=\sum_{i,j} \partial_i(\omega_j(X_iY_j-X_jY_i))=-\mathrm{div}(X\wedge Y\trace \omega).
        \nonumber
    \end{split}
\end{equation}
This concludes the proof.
\end{proof}

\begin{proposition}\label{prop:iddifferenziale:utile}
    Let $\omega$ be a $1$-form of class $C^1$ and $X,Y$ be vector fields of class $C^1$ such that $\omega(X)=\omega(Y)=0$. Then 
$$\mathrm{d}\omega(X\wedge Y)=\omega([X,Y]).$$
\end{proposition}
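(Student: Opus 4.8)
The plan is to deduce the Proposition as an immediate consequence of Lemma \ref{prop:iddifferenziale}: the extra hypothesis $\omega(X)=\omega(Y)=0$ makes three of the four terms on the right-hand side of that identity vanish, together with its left-hand side, leaving precisely the claimed equality.

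The first step is to record the elementary identity
\[
X\wedge Y\trace\omega=\omega(X)\,Y-\omega(Y)\,X,
\]
which follows directly from the definition of the interior product in \S\ref{def:int_prod}: testing against a canonical $1$-covector $\de x_i$ gives $\scalar{X\wedge Y\trace\omega}{\de x_i}=\scalar{X\wedge Y}{\omega\wedge\de x_i}=\omega(X)\,Y_i-\omega(Y)\,X_i$. In particular, under the hypothesis $\omega(X)=\omega(Y)=0$ the vector field $X\wedge Y\trace\omega$ is identically zero, and hence so is $\mathrm{div}(X\wedge Y\trace\omega)$; likewise the scalar functions $\omega(X)\,\mathrm{div}(Y)$ and $\omega(Y)\,\mathrm{div}(X)$ vanish identically.

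Substituting these vanishings into the identity of Lemma \ref{prop:iddifferenziale} leaves $0=\mathrm{d}\omega(X\wedge Y)-\omega([X,Y])$, which is the assertion. There is no real obstacle here: the only place where a bit of care is needed is the sign bookkeeping in the interior-product computation of the first step, but this is irrelevant for the conclusion, since the relevant quantity vanishes identically under the hypothesis regardless of sign conventions. (Alternatively, one could bypass Lemma \ref{prop:iddifferenziale} and argue directly from Cartan's formula $\mathrm{d}\omega(X\wedge Y)=X(\omega(Y))-Y(\omega(X))+\omega([X,Y])$, observing that the first two terms are derivatives of the identically vanishing functions $\omega(Y)$ and $\omega(X)$; but since Lemma \ref{prop:iddifferenziale} is already at our disposal, the first route is the shortest.)
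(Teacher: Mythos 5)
Your proof is correct and follows essentially the same route as the paper's: apply Lemma \ref{prop:iddifferenziale}, observe that the hypothesis $\omega(X)=\omega(Y)=0$ kills the two scalar terms and that $X\wedge Y\trace\omega=\omega(X)\,Y-\omega(Y)\,X=0$ kills the divergence term, leaving the claimed identity. The only (harmless) difference is that you verify the interior-product identity by testing against basis covectors, whereas the paper simply states it.
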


\begin{proof}
    Since $X,Y$ are in the kernel of $\omega$, Proposition \ref{prop:iddifferenziale} implies that 
$$-\mathrm{div}(X\wedge Y\trace \omega)=\mathrm{d}\omega(X\wedge Y)-\omega([X,Y]).$$
However, notice further that $X\wedge Y$ is a $2$-vector tangent to the $1$-codimensional distribution $\mathrm{Ker}(\omega)$, i.e. $X\wedge Y(z)$ is a $2$-vector of $\mathrm{Ker}(\omega(z))$ for any $z\in \R^n$. This implies that 
$$X\wedge Y\llcorner \omega=\omega(X)Y-\omega(Y)X=0.$$
The above identity concludes the proof of the proposition. 
\end{proof}

\begin{parag}[Commutators of weighted vector fields]\label{commexpr}
Let $X,Y$ be vector fields of class $C^1$ and let $f,g$ be two functions of class $C^1$. Then, in coordinates we have 
    \begin{equation}
        \begin{split}
           [fX,gY]=& \sum_{i,j=1}^n (fX_j\partial_j(gY_i)-gY_j\partial_j(fX_i))e_i\\
           =&fg[X,Y]+f\langle \nabla g,X\rangle Y-g\langle\nabla f,Y\rangle X.
           \nonumber
        \end{split}
    \end{equation}
\end{parag}

\begin{proposition}\label{prop:implicinv}
Let $V$ be a distribution of $k$-planes in $\R^n$. The following are equivalent
\begin{itemize}
    \item[(i)]$V$ is  non-involutive at $x\in \R^n$;
    \item[(ii)]there exists a ball $B$ centered at $x$ and a $1$-form $\omega$ of class $C^1$ such that $\omega\rvert_{V}=0$ for any $z\in B$ and $\mathrm{d}\omega\rvert_{V}(x)\neq 0$;
    \item[(iii)] let $Y_1,\ldots, Y_k$ be vector fields of class $C^1$ spanning $W$ in a neighborhood of $x$. Then, there are $a_1,a_2\in \{1,\ldots,k\}$ such that $[Y_{a_1},Y_{a_2}](x)\not\in V(x)$.
\end{itemize} 
\end{proposition}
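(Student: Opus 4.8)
The plan is to prove the cycle of implications $(i)\Rightarrow(ii)\Rightarrow(iii)\Rightarrow(i)$, exploiting Proposition \ref{prop:iddifferenziale:utile} as the main bridge between the differential-form characterization and the Lie-bracket characterization. The key technical point throughout is that non-involutivity is a pointwise condition, but the vector fields witnessing it must be defined (and $C^1$) on a neighbourhood, so all the algebraic manipulations with $1$-forms and commutators have to be carried out locally.

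First I would prove $(iii)\Rightarrow(i)$, which is essentially trivial: if $Y_1,\dots,Y_k$ span $V$ near $x$ and some $[Y_{a_1},Y_{a_2}](x)\notin V(x)$, then $X:=Y_{a_1}$ and $X':=Y_{a_2}$ are $C^1$ vector fields tangent to $V$ whose commutator at $x$ escapes $V(x)$, which is exactly Definition \ref{def:inv}. Next, $(i)\Rightarrow(iii)$: suppose $X,X'$ are $C^1$ vector fields tangent to $V$ with $[X,X'](x)\notin V(x)$. Near $x$ we may write $X=\sum_i f_i Y_i$ and $X'=\sum_j g_j Y_j$ for $C^1$ coefficients $f_i,g_j$ (using that $Y_1,\dots,Y_k$ span $V$ and, after possibly shrinking the neighbourhood, form a $C^1$ frame so the coefficients are $C^1$ — this requires a completion of $Y_1,\dots,Y_k$ to a local frame of $\R^n$, or a smooth partition argument). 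Then by the bilinearity of the bracket and the Leibniz-type formula in \S\ref{commexpr}, $[X,X']=\sum_{i,j} f_i g_j [Y_i,Y_j] + (\text{terms of the form } (\text{function})\cdot Y_\ell)$. The correction terms all lie in $V$, so modulo $V(x)$ we get $[X,X'](x) \equiv \sum_{i,j} f_i(x) g_j(x)\,[Y_i,Y_j](x)$. Since the left side is nonzero modulo $V(x)$, at least one $[Y_i,Y_j](x)$ must be nonzero modulo $V(x)$, giving the desired $a_1,a_2$.

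For the equivalence with $(ii)$, I would use the local description of $V$ as a common kernel of $1$-forms. For $(ii)\Rightarrow(i)$: if $\omega|_V=0$ on a ball $B$ and $\mathrm{d}\omega|_V(x)\neq 0$, pick $C^1$ vector fields $X,Y$ tangent to $V$ with $\mathrm{d}\omega(X\wedge Y)(x)\neq 0$; since $\omega(X)\equiv\omega(Y)\equiv 0$ on $B$, Proposition \ref{prop:iddifferenziale:utile} gives $\mathrm{d}\omega(X\wedge Y)=\omega([X,Y])$ on $B$, so $\omega([X,Y])(x)\neq 0$, hence $[X,Y](x)\notin V(x)=\mathrm{Ker}(\omega(x))\cap\dots$, i.e. $V$ is non-involutive at $x$. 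For $(i)\Rightarrow(ii)$: choose, via the paragraph ``Distributions as the kernel of forms'', $C^1$ $1$-forms $\omega_1,\dots,\omega_{n-k}$ on a ball $B$ whose common kernel is $V$; by $(i)$ (and the already-established $(i)\Leftrightarrow(iii)$) there are $X,Y$ tangent to $V$ with $[X,Y](x)\notin V(x)$, so some $\omega_\ell([X,Y])(x)\neq 0$; applying Proposition \ref{prop:iddifferenziale:utile} to $\omega:=\omega_\ell$ gives $\mathrm{d}\omega_\ell(X\wedge Y)(x)=\omega_\ell([X,Y])(x)\neq 0$, hence $\mathrm{d}\omega_\ell|_V(x)\neq 0$, and $\omega_\ell|_V\equiv 0$ on $B$ by construction.

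I expect the main obstacle to be the bookkeeping in $(i)\Rightarrow(iii)$, namely justifying that $C^1$ vector fields tangent to $V$ can be expressed with $C^1$ coefficients against a chosen spanning set $Y_1,\dots,Y_k$ — one must either assume/arrange that the $Y_i$ extend to a local $C^1$ frame of $\R^n$ (so the coefficient extraction is just solving a linear system with $C^1$, invertible coefficient matrix) or argue more carefully near points where the frame degenerates; once the coefficients are $C^1$, the computation in \S\ref{commexpr} handles the rest cleanly. The form-theoretic implications are comparatively routine given Proposition \ref{prop:iddifferenziale:utile}.
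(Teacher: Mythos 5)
Your proposal is correct and takes essentially the same approach as the paper: both prove $(i)\Leftrightarrow(ii)$ via Proposition \ref{prop:iddifferenziale:utile} and $(i)\Leftrightarrow(iii)$ via the commutator identity of \S\ref{commexpr}, the only cosmetic differences being that the paper argues $(i)\Rightarrow(iii)$ by contradiction while you do it directly, and you are somewhat more explicit about extracting $C^1$ coefficients against the spanning frame and about the final step of $(ii)\Rightarrow(i)$ (which the paper leaves implicit after obtaining $\mathrm{d}\omega(X_i\wedge X_j)(x)\neq 0$).
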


\begin{proof}
Suppose $V$ is non-involutive at $x$, there are two vector fields tangent to $V$ such that $[X,Y](x)\not\in V(x)$. This implies in particular that there exists a $1$-form of class $C^1$ such that $\omega\vert_{V}=0$ and $\omega_x([X,Y](x))\neq 0$. Proposition \ref{prop:iddifferenziale:utile} implies in particular that at $x$, we have
$$(\mathrm{d}\omega)_x(X\wedge Y(x))=\omega_x([X,Y](x))\neq 0.$$
Viceversa, suppose there exists a ball $B$ centered at $x$ and a $1$-form $\omega$ of class $C^1$ such that $\omega\vert_{V(z)}=0$ for every $z\in B$ and $(\mathrm{d}\omega_x)\lvert_{V(x)}\neq 0$. Then, there is $w\in \Lambda^2(V(x))$ such that $(\mathrm{d}\omega_x)(w)\neq 0$. Denoted by $\{v_1,\ldots, v_k\}$ a family of independent vectors of $V(x)$,
there are coefficients $\lambda_{i,j}\in \R$ with $i,j\in\{1,\ldots, k\}$ such that 
$$w=\sum_{i<j}\lambda_{i,j}v_i\wedge v_j.$$
Since $\{v_i\wedge v_j\}_{i<j}$ is a basis of $\Lambda^2(V(x))$, we infer that there must exist $i<j$ such that 
$$\mathrm{d}\omega_x(v_i\wedge v_j)\neq 0.$$
Let $X_1,\ldots,X_k$ be $C^1$ vector fields tangent to $V$ such that $X_i(x)=v_i$ for every $i=1,\ldots,k$. However, it is immediate to see, thanks to the very definition of $\mathrm{d}\omega$, that 
$$0\neq \mathrm{d}\omega_x(v_i\wedge v_j)=\mathrm{d}\omega(X_i\wedge X_j)(x).$$
This proves the equivalence of (i) and (ii).

The fact that (i) implies (iii) is immediate while the viceversa is more delicate. Suppose by contradiction (iii) did not imply (i). Since $V$ is supposed to be non-involutive we can find two vector fields $X_1,X_2$ of class $C^1$ such that $[X_1,X_2](x)\not \in V(x)$. It is immediate to see that there are functions $\eta_k^j$ with $k\in \{1,2\}$ and $j\in \{1,\ldots,n\}$ of class $C^1$ such that $$X_k=\sum_{j=1}^n\eta_k^jY_j.$$
    Thank to \S\ref{commexpr}, we have 
    \begin{equation}
        \begin{split}
            &\qquad\qquad\qquad [X_1,X_2](x)=\sum_{j_1,j_2}[\eta_1^{j_1}Y_{j_1},\eta_2^{j_2}Y_{j_2}](x)\\
    =&\sum_{j_1,j_2}(\eta_1^{j_1}\eta_2^{j_2}[Y_{j_1},Y_{j_2}])(x)+\sum_{j_1,j_2}(\eta_1^{j_1}\langle \nabla \eta_2^{j_2},Y_{j_1}\rangle Y_{j_2}-\eta_2^{j_2}\langle \nabla \eta_1^{j_1},Y_{j_2}\rangle Y_{j_1})(x).
    \nonumber
        \end{split}
    \end{equation}
However, since for every $j_1,j_2\in \{1,\ldots,n\}$ we have $[Y_{j_1},Y_{j_2}](x)\in V(x)$ and $Y_{j_1}(x),Y_{j_2}(x)\in V(x)$ we see how the above identity chain results in contradiction with the fact that $[X_1,X_2](x)\not\in V(x)$. 
\end{proof}

\begin{parag}[$h$-non-involutivity]\label{def:hnoninv}
    Let $V$ be a $C^1$ distribution of $k$-planes and $h\leq k$. We say that $V$ is \emph{$h$-non-involutive} at a point $x$ if any $C^1$ regular $h$-dimensional distribution tangent to $V$ is non-involutive at $x$.
\end{parag}

\begin{parag}[Equivalence of $C^k$ and $C^1$ $h$-non-involutivity]
    Let $h\leq k$ and suppose $V$ is a $k$-dimensional distribution of class $C^\ell$, for some $\ell\geq 1$. Clearly, if $V$ is $h$-non-involutive at $x$ then for every $\ell\leq h$ each $C^\ell$-regular $h$-dimensional distribution tangent to $V$ is non-involutive at $x$. 
    
    Viceversa, suppose that every $C^\ell$-regular distribution tangent to $V$ of dimension $h$ is non-involutive at $x$. 
    Let $X_1,\ldots X_k$ be vector fields of class $C^\ell$ spanning $V$ and note that for any $h$-distribution $W$ of class $C^1$ tangent to $V$, we can find $C^1$ functions $p_{i,j}$ with $i\in \{1,\ldots,h\}$ and $j\in\{1,\ldots,k\}$ such that the vector fields $Y_i:=\sum_{j=1}^kp_{i,j}X_j$ span $W$. 
        We claim that there are $a,b\in\{1,\ldots,h\}$ such that $[Y_a,Y_b](x)\not\in W(x)$.

Let $q_{i,j}(y):=p_{i,j}(x)+\nabla p_{i,j}(x)[y-x]$,
and denote by $\bar{Y}_i$ the vector field 
$$\bar{Y}_{i}:=\sum_{i,j} q_{i,j}X_j.$$
The vector fields $\bar{Y}_i$ are of class $C^r$, and 
\begin{equation}
    \begin{split}
        [\bar Y_a,\bar Y_b]=&\sum_{i,j}q_{a,i}q_{b,j}[X_i,X_j]+\sum_{i,j}(q_{a,i}\langle \nabla q_{b,j},X_i\rangle X_j-
q_{b,j}\langle \nabla q_{a,i},X_j\rangle X_i)\\
=&\sum_{i,j}q_{a,i}q_{b,j}[X_i,X_j]+\sum_{i,j}(q_{a,i}\langle \nabla p_{b,j}(x),X_i\rangle X_j-
q_{b,j}\langle \nabla p_{a,i}(x),X_j\rangle X_i).
\nonumber
    \end{split}
\end{equation}
Moreover, at the point $x$, the vector fields $\bar Y_i$ span $W(x)$. This concludes the proof, indeed
\begin{equation}
    \begin{split}
        &\qquad\qquad\qquad[\bar Y_a,\bar Y_b](x)=\sum_{i,j}p_{a,i}(x)p_{b,j}(x)[X_i,X_j](x)\\
&+\sum_{i,j}(p_{a,i}(x)\langle \nabla p_{b,j}(x),X_i\rangle X_j-
p_{b,j}(x)\langle \nabla p_{a,i}(x),X_j\rangle X_i)=[Y_a,Y_b](x).
\nonumber
    \end{split}
\end{equation}
The vector fields $\bar Y_1,\ldots,\bar Y_h$ in a neighbourhood of $x$ are independent and thus they span a $h$-dimensional distribution $\bar W$ of class $C^r$, which thanks to our hypothesis is non-involutive. Therefore, thanks to Proposition \ref{prop:implicinv}, we can find $a,b\in\{1,\ldots,h\}$ such that $[Y_a,Y_b](x)=[\bar Y_a,\bar Y_b](x)\not\in W(x)$.   
\end{parag}

\begin{parag}[An example]
Thanks to Proposition \ref{prop:implicinv} it is immediate that a $k$-distribution $V$ is $h$-non-involutive if and only if for every $h$-dimensional sub-distribution $W$ of $V$ there exists a $1$-form $\omega$ satisfying 
\[
\omega\vert_W = 0 \quad \text{and} \quad \mathrm{d}\omega\vert_W(x) \neq 0.
\]

An elementary consequence of the above observation is the following: if a $k$-dimensional distribution $V$ has the property that for every $h$-dimensional sub-distribution $W$ there exists a $C^1$ $1$-form $\omega$ with $\omega\vert_V = 0$ and $\mathrm{d}\omega\vert_W(x) \neq 0$, then $V$ is $h$-non-involutive. A natural question is whether this sufficient condition is also necessary. Unfortunately, as we shall see below, the answer is no. However, in many concrete cases, such as the horizontal distribution of Carnot groups, this stronger form of $h$-non-involutivity is indeed satisfied.

We now proceed with the construction of a counterexample. Let $F(3,3)$ denote the free Lie algebra with three generators $e_1, e_2, e_3$ and three steps. Associated to $F(3,3)$ is the Carnot group $\mathbb{F}_{3,3}$, whose Lie algebra is $F(3,3)$. By identifying $\mathbb{F}_{3,3}$ with $\R^{14}$ (endowed with the group operation induced by the Baker--Campbell--Hausdorff formula), one can construct left-invariant vector fields $X_1,\ldots,X_{14}$ that, at the origin, coincide with the canonical basis $e_1,\ldots,e_{14}$ of $\R^{14}$ and satisfy the following commutation relations
\begin{equation}
    \begin{split}
        &[X_1,X_2]=X_4,\,\,\,[X_1,X_3]=X_5,\,\,\, [X_2,X_3]=X_6,
        [X_1,X_4]=X_7,\,\,\, [X_1,X_5]=X_8,\\
        &\qquad[X_1,X_6]=X_9,\,\,\,[X_2,X_4]=X_{10},\,\,\,[X_2,X_5]=X_{11},\,\,\,
        [X_2,X_6]=X_{12},\\
         &\qquad\qquad[X_3,X_4]=-X_8+X_{11},\,\,\, [X_3,X_5]=X_{13},\,\,\,[X_3,X_6]=X_{14}.
         \nonumber
    \end{split}
\end{equation}

Let $V$ be the smooth $6$-dimensional distribution spanned by the vector fields $\{X_1,X_2,\ldots,X_6\}$. Note that any $2$-dimensional sub-distribution of class $C^1$ of $V$ is non-involutive. Indeed, let $\alpha_1,\ldots,\alpha_6$ and $\beta_1,\ldots,\beta_6$ be $C^1$ functions, and define
\[
Y:=\sum_{i=1}^6\alpha_i X_i\quad \text{and}\quad Z:=\sum_{i=1}^6\beta_i X_i.
\]
One easily verifies that
\begin{equation}
    \begin{split}
[Y,Z] = \sum_{i,j=1}^6 \alpha_i\beta_j[X_i,X_j] 
&\;+\; \sum_{i,j=1}^6 \Bigl(\alpha_i\langle \nabla \beta_j,X_i\rangle X_j - \beta_j\langle \nabla \alpha_i,X_j\rangle X_i\Bigr).
    \end{split}
\end{equation}
An elementary computation shows that
\begin{equation}
    \begin{split}
&[Y,Z] + V = (\alpha_1\beta_4-\alpha_4\beta_1)X_7+(\alpha_1\beta_5-\alpha_5\beta_1)X_8+(\alpha_1\beta_6-\alpha_6\beta_1)X_9\\[1mm]
    &\qquad+ (\alpha_2\beta_4-\alpha_4\beta_2)X_{10}+(\alpha_2\beta_5-\alpha_5\beta_2)X_{11}+(\alpha_2\beta_6-\alpha_6\beta_2)X_{12}\\[1mm]
    + &(\alpha_3\beta_4-\alpha_4\beta_3)(-X_8+X_{11})+(\alpha_3\beta_5-\alpha_5\beta_3)X_{13}+ (\alpha_3\beta_6-\alpha_6\beta_3)X_{14} + V.
    \nonumber
    \end{split}
\end{equation}
The expression on the right-hand side implies that $[Y,Z]$ belongs to $V$ if and only if $Y$ and $Z$ are linearly dependent.

On the other hand, consider the distribution $W$ spanned by $X_1, X_2, X_3$. This distribution is clearly non-involutive. Nevertheless, for any $1$-form $\omega$ of class $C^1$ satisfying $\omega\vert_V=0$, we have $\mathrm{d}\omega\vert_W=0$. Indeed, for every pair of $C^1$ vector fields $Y,Z$ tangent to $W$ we have
\[
\mathrm{d}\omega(Y\wedge Z)=\omega([Y,Z])=0,
\]
since all the commutators of the vector fields $X_1, X_2, X_3$ lie in $V$.
\end{parag}

\subsection{A technical lemma}

\begin{parag}[Currents]
\label{def:curr}
A $k$-dimensional current ($k$-current)
$T$ on the open set $\Omega$ in $\R^n$ is a continuous 
linear functional on the space of smooth $k$-forms 
with compact support in~$\Omega$. 
The \emph{boundary} of $T$ is the $(k-1)$-current 
$\bd T$ on $\Omega$ defined by 
$\scalar{\bd T}{\omega} := \scalar{T}{d\omega}$
for every smooth $(k-1)$-form $\omega$ with compact support.
The \emph{mass} of $T$, denoted by 
$\mathbb{M}(T)$, is the supremum of $\scalar{T}{\omega}$ over
all $k$-forms $\omega$ such that $|\omega(x)|\le 1$ for every~$x\in\Omega$.

By Riesz's representation theorem, the fact that $T$ has finite mass 
is equivalent to saying that $T$ can be represented 
as a finite measure on $\Omega$ with values 
in the space $\esterno_k(\R^n)$, that is,
$T=\tau\mu$ where $\mu$ is a finite positive measure
on $\Omega$ and $\tau$ is a Borel $k$-vector field 
in~$L^1(\mu)$.
Thus
\[
\scalar{T}{\omega} 
= \int_{\Omega} \scalar{\tau(x)}{\omega(x)} \, d\mu(x)
\]
for every admissible $k$-form $\omega$ on $\Omega$,   
and $\mathbb{M}(T)=\int_{\Omega} |\tau| \, d\mu$.
Finally, a $k$-current $T$ is said to be \emph{normal} if both $T$ 
and $\bd T$ have finite mass.
\end{parag}

\begin{parag}[Distributional differential of $1$-forms]
    Let $\omega$ be a continuous $1$-form. We let $\mathrm{d}\omega$ be the distributional external differential of $\omega$, defined by duality as
    $$\langle\partial T,\omega\rangle=\langle T,d\omega\rangle,$$
for any $2$-current $T$ such that $T=\tau\mathscr{L}^n$, where $\tau$ is a $2$-vector of class $C^\infty_c$.  
\end{parag}

\begin{proposition}\label{lemmabordodistro}
    Let $\omega$ be a continuous $1$-form, such that its distributional external differential $\mathrm{d}\omega$ is represented by a continuous $2$-form. Then, 
$$\langle \partial T,\omega\rangle=\langle T,\mathrm{d}\omega\rangle\qquad\text{for any $2$-normal current $T$ with compact support.}$$ 
\end{proposition}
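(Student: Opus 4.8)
The plan is to reduce the general statement, via a standard mollification argument, to the case where $\omega$ is smooth, where the identity $\langle \partial T,\omega\rangle=\langle T,\mathrm{d}\omega\rangle$ is the very definition of the boundary of $T$ tested against the smooth form $\omega$. The point is that both sides of the claimed identity are well-defined and continuous with respect to the right notions of convergence once $T$ is a normal current with compact support: the left-hand side only sees the values of $\omega$ on $\mathrm{spt}(\partial T)$, and since $\partial T$ has finite mass, $\langle \partial T,\omega\rangle=\int \langle \sigma;\omega\rangle\, d\nu$ where $\partial T=\sigma\nu$; the right-hand side is $\int\langle\tau;\mathrm{d}\omega\rangle\,d\mu$ where $T=\tau\mu$. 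So both are integrals of a continuous integrand against a finite measure.

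First I would fix $T$ a normal $2$-current with compact support in $\Omega$, and choose a cutoff so we may assume $\omega$ is defined on all of $\R^n$ (multiply by a smooth function equal to $1$ on a neighbourhood of $\mathrm{spt}(T)$; this changes neither side). Let $\rho_\varepsilon$ be a standard mollifier and set $\omega_\varepsilon:=\omega*\rho_\varepsilon$, a smooth $1$-form. Since $\omega$ is continuous, $\omega_\varepsilon\to\omega$ locally uniformly; hence $\langle\partial T,\omega_\varepsilon\rangle\to\langle\partial T,\omega\rangle$ because $\partial T=\sigma\nu$ with $\nu$ finite and compactly supported. For the other side, the key fact is that mollification commutes with exterior differentiation in the distributional sense: $\mathrm{d}(\omega_\varepsilon)=(\mathrm{d}\omega)*\rho_\varepsilon$ as smooth forms, where $\mathrm{d}\omega$ on the right is the given continuous $2$-form representing the distributional differential. (This is immediate by testing against smooth compactly supported $2$-vectorfields and using the definition in the ``Distributional differential of $1$-forms'' paragraph, together with the fact that convolution commutes with constant-coefficient differential operators.) Therefore $\mathrm{d}\omega_\varepsilon\to\mathrm{d}\omega$ locally uniformly, and $\langle T,\mathrm{d}\omega_\varepsilon\rangle\to\langle T,\mathrm{d}\omega\rangle$ because $T=\tau\mu$ with $\mu$ finite and compactly supported. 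Since $\omega_\varepsilon$ is smooth, $\langle\partial T,\omega_\varepsilon\rangle=\langle T,\mathrm{d}\omega_\varepsilon\rangle$ by definition of $\partial T$; passing to the limit on both sides gives the claim.

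The main obstacle, and the only step requiring a little care, is the identity $\mathrm{d}(\omega*\rho_\varepsilon)=(\mathrm{d}\omega)*\rho_\varepsilon$: one must check that the distributional differential of $\omega$, which a priori is only a distribution, is represented by the continuous form $\mathrm{d}\omega$ \emph{after} convolution it becomes the classical differential of the smooth form $\omega_\varepsilon$. This follows by writing, for a test $2$-vectorfield $\tau\in C^\infty_c$,
\[
\langle (\mathrm{d}\omega)*\rho_\varepsilon,\tau\rangle
=\langle \mathrm{d}\omega,\tau*\check\rho_\varepsilon\rangle
=\langle \omega,\delta(\tau*\check\rho_\varepsilon)\rangle
=\langle \omega,(\delta\tau)*\check\rho_\varepsilon\rangle
=\langle \omega*\rho_\varepsilon,\delta\tau\rangle
=\langle \mathrm{d}(\omega*\rho_\varepsilon),\tau\rangle,
\]
where $\delta$ denotes the formal adjoint of $\mathrm{d}$ (a constant-coefficient first-order operator) and $\check\rho_\varepsilon(x):=\rho_\varepsilon(-x)$; the third equality is the definition of the distributional differential and the fourth uses that constant-coefficient differential operators commute with convolution. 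Once this is in hand, the rest is the routine limiting argument above, and one should also note that all the relevant integrals stay supported in a fixed compact neighbourhood of $\mathrm{spt}(T)$ for $\varepsilon$ small, so no issues at infinity arise.
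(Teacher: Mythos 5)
Your proof is correct and takes essentially the same mollification-and-limit approach as the paper. The only (cosmetic) difference is the side on which mollification is applied: the paper mollifies the current $T$ and verifies $\partial(T*\rho_\varepsilon)=(\partial T)*\rho_\varepsilon$, while you mollify the form $\omega$ and verify $\mathrm{d}(\omega*\rho_\varepsilon)=(\mathrm{d}\omega)*\rho_\varepsilon$; by adjointness of convolution these are dual statements, producing the same intermediate identity $\langle T,(\mathrm{d}\omega)*\rho_\varepsilon\rangle=\langle\partial T,\omega*\rho_\varepsilon\rangle$ before the limit.
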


\begin{proof}
    Let $\rho_\varepsilon$ be a standard radial kernel of convolution and let $T_\varepsilon:=T*\rho_\varepsilon$. There holds that $\partial(T_\varepsilon)=(\partial T)_\varepsilon$, indeed 
    \begin{equation}
        \begin{split}
            \langle(\partial T)_\varepsilon,\tilde\omega\rangle=&\langle \partial T,\tilde\omega*\rho_\varepsilon\rangle\\
            =&\langle T,\mathrm{d}(\tilde\omega*\rho_\varepsilon)\rangle=\langle T,\mathrm{d}\tilde\omega*\rho_\varepsilon\rangle=\langle T_\varepsilon,\mathrm{d}\tilde\omega\rangle=\langle\partial(T_\varepsilon),\tilde\omega\rangle,
            \nonumber
        \end{split}
    \end{equation}
    for any $1$-form $\tilde\omega$ of class $C^\infty_c$. Note that in the above computations we exploited the fact that $\tilde{\rho}_\varepsilon(z)=\rho_\varepsilon(-z)$ for any $z\in \R^n$.  The action of $\partial(T_\varepsilon)$ and $(\partial T)_\varepsilon$ on $\omega$ and $\mathrm{d}\omega$ can be represented by integration. Therefore, the identity 
    $\langle T,\mathrm{d}\omega*\rho_\varepsilon\rangle=\langle\partial T,  \omega*\rho_\varepsilon\rangle$ follows from the chain of identities below
    $$\langle T,\mathrm{d}\omega*\rho_\varepsilon\rangle=\langle T_\varepsilon,\mathrm{d}\omega\rangle=\langle\partial(T_\varepsilon),\omega\rangle=\langle (\partial T)_\varepsilon,\omega\rangle=\langle \partial T,\omega*\rho_\varepsilon\rangle,$$
    where the first and last identity follow from the fact that $\partial T$ and $T$ are compactly supported measures. The second identity follows from the definition of distributional differential. The third identity follows by observing that the currents $\partial(T_\varepsilon)$ and $(\partial T)_\varepsilon$ can be represented as vector-valued compactly supported Radon measures. For the same reason, since $\omega*\rho_\varepsilon$ and $\mathrm{d}\omega*\rho_\varepsilon$ converge locally uniformly to $\omega$ and $\mathrm{d}\omega$ respectively, we see that $\langle \partial T,\omega\rangle=\langle T,\mathrm{d}\omega\rangle$ concluding the proof of the proposition. 
\end{proof}

\begin{proposition}\label{id:distromega}
    Let $\Omega\subseteq \R^k$ be an open set be such that  $\Phi\in\mathscr{C}^{1}(\Omega,\R^n)$ and let $\omega$ be a $1$-form of class $C^1$. Then
$$\mathrm{d}(\Phi^\#\omega)=\Phi^\#(\mathrm{d}\omega),$$
    where the identity above has to be understood in the sense of distributions. 
\end{proposition}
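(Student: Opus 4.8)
The plan is to deduce the identity from its classical counterpart — the naturality $\mathrm{d}(\Psi^\#\omega)=\Psi^\#(\mathrm{d}\omega)$, which holds whenever $\Psi$ is of class $C^\infty$ and $\omega$ is of class $C^1$ — by mollifying $\Phi$. Recall that $\Phi^\#\omega$ is only a \emph{continuous} $1$-form on $\Omega$ (its coefficients are products of $\omega_i\circ\Phi$ with the first derivatives of $\Phi$), so $\mathrm{d}(\Phi^\#\omega)$ must be read distributionally; on the other hand, since $\omega$ is $C^1$, $\mathrm{d}\omega$ is a continuous $2$-form and hence $\Phi^\#(\mathrm{d}\omega)$ is an honest continuous $2$-form. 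Thus, fixing a $2$-vector field $\tau\in C^\infty_c(\Omega,\esterno_2(\R^k))$, it suffices to show
\[
\langle \partial(\tau\,\mathscr{L}^k),\,\Phi^\#\omega\rangle=\int_\Omega \langle\tau(x),\,\Phi^\#(\mathrm{d}\omega)(x)\rangle\,\mathrm{d}x,
\]
which by the definition of the distributional differential is exactly the claim.

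First I would pick open sets with $\mathrm{supp}(\tau)\Subset\Omega'\Subset\Omega$ and set $\Phi_\varepsilon:=\Phi*\rho_\varepsilon$ for a standard radial mollifier $\rho_\varepsilon$; for $\varepsilon$ small enough $\Phi_\varepsilon\in C^\infty(\Omega',\R^n)$ and $\Phi_\varepsilon\to\Phi$, $\mathrm{d}\Phi_\varepsilon\to\mathrm{d}\Phi$ uniformly on $\overline{\Omega'}$. Since $\Phi_\varepsilon$ is smooth and $\omega$ is $C^1$, the $C^1$ form $\Phi_\varepsilon^\#\omega$ satisfies the classical identity $\mathrm{d}(\Phi_\varepsilon^\#\omega)=\Phi_\varepsilon^\#(\mathrm{d}\omega)$ on $\Omega'$; pairing with $\tau\,\mathscr{L}^k$ and using the definition of the boundary gives
\[
\langle \partial(\tau\,\mathscr{L}^k),\,\Phi_\varepsilon^\#\omega\rangle=\langle\tau\,\mathscr{L}^k,\,\mathrm{d}(\Phi_\varepsilon^\#\omega)\rangle=\int_{\Omega'}\langle\tau(x),\,\Phi_\varepsilon^\#(\mathrm{d}\omega)(x)\rangle\,\mathrm{d}x .
\]
Then I would let $\varepsilon\to0$: the coefficients of $\Phi_\varepsilon^\#\omega$ and of $\Phi_\varepsilon^\#(\mathrm{d}\omega)$ are finite sums of products of $\omega_i\circ\Phi_\varepsilon$ (resp.\ of $(\mathrm{d}\omega)_{ij}\circ\Phi_\varepsilon$) with $2\times 2$ minors of $\mathrm{d}\Phi_\varepsilon$, so the uniform convergence $\Phi_\varepsilon\to\Phi$, $\mathrm{d}\Phi_\varepsilon\to\mathrm{d}\Phi$ on $\overline{\Omega'}$ together with the continuity of $\omega$ and $\mathrm{d}\omega$ forces $\Phi_\varepsilon^\#\omega\to\Phi^\#\omega$ and $\Phi_\varepsilon^\#(\mathrm{d}\omega)\to\Phi^\#(\mathrm{d}\omega)$ uniformly on $\overline{\Omega'}$. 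Since $\partial(\tau\,\mathscr{L}^k)$ is a fixed compactly supported (smooth) $1$-vector measure and $\tau\,\mathscr{L}^k$ a fixed compactly supported integrable $2$-vector field, both sides of the displayed identity pass to the limit, yielding the required equality.

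The one genuinely non-routine point is the classical identity $\mathrm{d}(\Phi_\varepsilon^\#\omega)=\Phi_\varepsilon^\#(\mathrm{d}\omega)$ for the smooth map $\Phi_\varepsilon$: starting from $\Phi_\varepsilon^\#\omega=\sum_i(\omega_i\circ\Phi_\varepsilon)\,\mathrm{d}\Phi_\varepsilon^i$, the chain rule $\mathrm{d}(\omega_i\circ\Phi_\varepsilon)=\sum_m(\partial_m\omega_i\circ\Phi_\varepsilon)\,\mathrm{d}\Phi_\varepsilon^m$ and the identity $\mathrm{d}(\mathrm{d}\Phi_\varepsilon^i)=0$ give $\mathrm{d}(\Phi_\varepsilon^\#\omega)=\sum_{m<i}\big((\partial_m\omega_i-\partial_i\omega_m)\circ\Phi_\varepsilon\big)\,\mathrm{d}\Phi_\varepsilon^m\wedge\mathrm{d}\Phi_\varepsilon^i=\Phi_\varepsilon^\#(\mathrm{d}\omega)$. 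It is precisely here that $C^2$ regularity of the map (hence smoothness after mollification, rather than bare $C^1$) is used, which is why the mollification step cannot be bypassed. An alternative I would keep in reserve, bypassing mollification, is to argue directly with currents: $\langle \partial(\tau\,\mathscr{L}^k),\Phi^\#\omega\rangle=\langle \Phi_\#\big(\partial(\tau\,\mathscr{L}^k)\big),\omega\rangle=\langle \partial\big(\Phi_\#(\tau\,\mathscr{L}^k)\big),\omega\rangle$, and since $\Phi_\#(\tau\,\mathscr{L}^k)$ is a normal current with compact support and $\mathrm{d}\omega$ is continuous, Proposition~\ref{lemmabordodistro} turns this into $\langle\Phi_\#(\tau\,\mathscr{L}^k),\mathrm{d}\omega\rangle=\langle\tau\,\mathscr{L}^k,\Phi^\#(\mathrm{d}\omega)\rangle$; this route rests instead on the commutation $\partial\circ\Phi_\#=\Phi_\#\circ\partial$ for normal currents pushed forward by $C^1$ maps.
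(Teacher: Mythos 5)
Your argument is correct and is essentially the paper's own proof: both reduce the distributional identity to pairing against test $2$-currents $\tau\,\mathscr{L}^k$ with $\tau\in C^\infty_c$, invoke the classical naturality $\mathrm{d}(\Psi^\#\omega)=\Psi^\#(\mathrm{d}\omega)$ for a $C^2$ (or smooth) approximant $\Psi$ of $\Phi$ in the $C^1$ topology, and then pass to the limit using uniform convergence of $\Psi^\#\omega$ and $\Psi^\#(\mathrm{d}\omega)$ together with the fact that $T$ and $\partial T$ are compactly supported measures. Your mollification $\Phi_\varepsilon=\Phi*\rho_\varepsilon$ (with the localization $\Omega'\Subset\Omega$) is just an explicit realization of the abstract $C^2$-approximating sequence the paper uses, and the alternative route via $\partial\circ\Phi_\#=\Phi_\#\circ\partial$ and Proposition~\ref{lemmabordodistro} is a legitimate shortcut, though not the one the paper takes.
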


\begin{proof}
    First of all, let us note that the form $\tilde \omega$ is a Lipschitz form. Furthermore, it is immediate to see that $\Phi^\#(\mathrm{d}\omega)$ is a continuous $2$-form. Note that if $\Psi$ is a $C^2$ regular function and $\omega$ is a $1$-form of class $C^1$, we have
$$\mathrm{d}(\Psi^\#\omega)=\Psi^\#(\mathrm{d}\omega).$$
The identity above is classical and it holds pointwise. Now, let $\Psi_n$ be a sequence of $C^2$ functions converging to $\Psi$ in the topology induced by the $C^1$ norm. It is immediate to see that $\Psi_n^\#\omega$ and $\Psi^\#_n(\mathrm{d}\omega)$ converge uniformly to $\Phi^\#\omega$ and $\Phi^\#(\mathrm{d}\omega)$ respectively. For any $2$-current $T$ such that $T=\tau\mathscr{L}^n$, where $\tau$ is a $2$-vector of class $C^\infty_c$ we have 
$$\langle T,\Psi_n^\#(\mathrm{d}\omega)\rangle=\langle T,\mathrm{d}(\Psi_n^\#\omega)\rangle=\langle \partial T,\Psi_n^\#\omega\rangle.$$
However, since $T$ and $\partial T$ are measures, thanks to the above observed uniform convergence we conclude that $\langle T,\Phi^\#(\mathrm{d}\omega)\rangle=\langle\partial T,\Phi^\#\omega\rangle$ for any $T$ as above. However, thanks to the definition of distributional differential, the claimed identity is proved. 
\end{proof}

\begin{parag}[The tangential differential of a $1$-form along a surface]\label{tangentialdiff} Let $\Sigma$ be a $k$-dimensional graph of class $C^1$. Denote by $\mathrm{Tan}(\Sigma,x)$ the tangent of the surface $\Sigma$ at the point $x$ and let $e_1,\ldots,e_k$ be a family of continuous orthonormal vector fields that span $\mathrm{Tan}(\Sigma,x)$ at every $x\in \Sigma$. Denote with $e_1^*,\ldots,e_k^*$ the continuous $1$-forms for which $\scalar{e_i^*}{e_j}=\delta_{i,j}$ for $i,j\in \{1,\ldots,n\}$. For any Lipschitz $1$-form $\omega$ we define the tangential differential $d_\Sigma\omega$ of $\omega$ as
$$d_\Sigma\omega(x):=\sum_{i,j}\partial_{e_j}\omega_i(x)\,e_j^*\wedge dx_i,$$
where $\partial_{e_j}\omega_i(x)$ denotes the derivative of the Lipschitz function $\omega_i$ along the vector $e_j(x)$ at $x$. Such derivative exists and it is well defined for $\Haus^k\llcorner\Sigma$-almost every $x\in \R^n$ since $e_j(x)\in \mathrm{Tan}(\Sigma,x)$ for any $j$ and $x\in \Sigma$.
\end{parag}

\begin{parag}[Boundary and tangential differential]\label{bordoAM}
In the notations of \S \ref{tangentialdiff} let $X_1,\ldots,X_k$ be $k$ continuous vector fields on $\R^n$ be such that they span $\mathrm{Tan}(\Sigma,x)$ at every point $x$ of $\Sigma$.
Let $T:=X_1\wedge\ldots\wedge X_k\mathscr{H}^k\llcorner \Sigma$ and for every $x\in\Sigma$ and $r>0$ we define $T_{x,r}:=\mathbb{1}_{B(x,r)}T$. The currents $T_{x,r}$ are normal for every $x\in \Sigma$ and every $r>0$. Therefore, thanks to \cite[Proposition 5.13]{AlbMar}, we have
\begin{equation} 
    \begin{split}
        \scalar{\partial T_{x,r}}{\omega}=\int_{B(x,r)} \scalar{X_1\wedge\ldots\wedge X_k}{d_\Sigma \omega}\, d\mathscr{H}^k\llcorner \Sigma.
    \end{split}
\end{equation}    
\end{parag}

\begin{lemma}\label{prop:tangentialdifferential}
Let $\Phi:\Omega\to \R^n$ be a map of class $C^{1,1}$ where $\Omega$ is some open set of $\R^{k'}$, $\omega$ be a $1$-form of class $C^1$ on $\R^n$ and let $\Sigma$ be a $h$-dimensional surface of class $C^1$ in $\R^{k'}$. Then
$$\mathrm{d}_\Sigma(\Phi^\#\omega)(y)=\Phi^\#(\mathrm{d}\omega)(y)\vert_{\mathrm{Tan}(\Sigma,y)},$$
for every $y\in \Sigma$ such that $\Phi^\#\omega$ is differentiable along $\mathrm{Tan}(\Sigma,y)$ and a Lebesgue continuity point for $\mathrm{d}_\Sigma(\Phi^\#\omega)$ with respect to the measure $\mathscr{H}^h\llcorner \Sigma$. Note that the set of such $y$ is of full $\mathscr{H}^h\llcorner \Sigma$-measure. 
\end{lemma}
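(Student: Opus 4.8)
The statement is a "commutation of exterior differential with pullback, restricted to a surface" identity, but under only $C^{1,1}$-regularity of $\Phi$ (so $\Phi^\#\omega$ is merely Lipschitz, not $C^1$, and the classical pointwise identity $\mathrm{d}(\Phi^\#\omega)=\Phi^\#(\mathrm{d}\omega)$ cannot be invoked pointwise). The strategy I would follow is to reduce the tangential identity on $\Sigma$ to the distributional identity of Proposition \ref{id:distromega} via the boundary-operator characterization of the tangential differential in \S\ref{bordoAM}, and then extract the pointwise (a.e.) statement by a Lebesgue-point argument.

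Concretely, first I would record that $\Phi^\#\omega$ is a Lipschitz $1$-form on $\Omega$ (since $\Phi\in C^{1,1}$ means $\mathrm{d}\Phi$ is Lipschitz and $\omega$ is $C^1$, so the coefficients of $\Phi^\#\omega$ are products of Lipschitz functions), and that $\Phi^\#(\mathrm{d}\omega)$ is a continuous $2$-form; hence by Rademacher the tangential differential $\mathrm{d}_\Sigma(\Phi^\#\omega)$ is defined $\mathscr{H}^h\llcorner\Sigma$-a.e. Next, fix vector fields $X_1,\ldots,X_h$ spanning $\mathrm{Tan}(\Sigma,\cdot)$ and, for $x\in\Sigma$ and $r>0$, form the normal currents $T_{x,r}:=\mathbb{1}_{B(x,r)}\,X_1\wedge\cdots\wedge X_h\,\mathscr{H}^h\llcorner\Sigma$. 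On one side, \S\ref{bordoAM} gives $\langle\partial T_{x,r},\Phi^\#\omega\rangle=\int_{B(x,r)}\langle X_1\wedge\cdots\wedge X_h;\,\mathrm{d}_\Sigma(\Phi^\#\omega)\rangle\,d\mathscr{H}^h\llcorner\Sigma$. On the other side, by Proposition \ref{id:distromega} the distributional differential of $\Phi^\#\omega$ equals the continuous form $\Phi^\#(\mathrm{d}\omega)$, so by Proposition \ref{lemmabordodistro} (applied to the normal compactly supported current $T_{x,r}$ and the continuous $1$-form $\Phi^\#\omega$ whose distributional differential is continuous) we get $\langle\partial T_{x,r},\Phi^\#\omega\rangle=\langle T_{x,r},\Phi^\#(\mathrm{d}\omega)\rangle=\int_{B(x,r)}\langle X_1\wedge\cdots\wedge X_h;\,\Phi^\#(\mathrm{d}\omega)\rangle\,d\mathscr{H}^h\llcorner\Sigma$. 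Equating the two and using that $\mathscr{H}^h\llcorner\Sigma$ is, by the $C^1$-regularity of $\Sigma$, asymptotically doubling with a nice density, I divide by $\mathscr{H}^h(\Sigma\cap B(x,r))$ and let $r\to 0$: at every Lebesgue continuity point $y$ of $\mathrm{d}_\Sigma(\Phi^\#\omega)$ (and since $\Phi^\#(\mathrm{d}\omega)$ is continuous, automatically a Lebesgue point for it) the averages converge to $\langle (X_1\wedge\cdots\wedge X_h)(y);\,\mathrm{d}_\Sigma(\Phi^\#\omega)(y)\rangle$ and $\langle (X_1\wedge\cdots\wedge X_h)(y);\,\Phi^\#(\mathrm{d}\omega)(y)\rangle$ respectively. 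Since the simple $h$-vector $X_1\wedge\cdots\wedge X_h$ spans $\Lambda_h(\mathrm{Tan}(\Sigma,y))$, and since both $\mathrm{d}_\Sigma(\Phi^\#\omega)(y)$ and $\Phi^\#(\mathrm{d}\omega)(y)|_{\mathrm{Tan}(\Sigma,y)}$ are covectors acting on $\mathrm{Tan}(\Sigma,y)$, I would need to repeat the argument with all coordinate $h$-vectors $e_{i_1}\wedge\cdots\wedge e_{i_h}$ tangent to $\Sigma$ (equivalently, choose the $X_i$'s among a rotating family of tangent frames) to conclude that the two $h$-covectors agree as forms on $\mathrm{Tan}(\Sigma,y)$, which is exactly the claimed identity $\mathrm{d}_\Sigma(\Phi^\#\omega)(y)=\Phi^\#(\mathrm{d}\omega)(y)|_{\mathrm{Tan}(\Sigma,y)}$.

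**The main obstacle.** The delicate point is the passage from the integral (distributional) identity to the pointwise one via Lebesgue differentiation, performed with respect to $\mathscr{H}^h\llcorner\Sigma$ on a $C^1$-surface rather than Lebesgue measure on $\R^{k'}$: one must make sure the Besicovitch/Lebesgue differentiation theorem applies to this measure (it does, since $\mathscr{H}^h\llcorner\Sigma$ is locally finite and, on a $C^1$ surface, asymptotically equivalent near each point to a flat measure, hence doubling at small scales), and that "Lebesgue continuity point for $\mathrm{d}_\Sigma(\Phi^\#\omega)$" is exactly the hypothesis needed so that the left-hand average converges to the pointwise value — which the statement wisely builds into its hypotheses. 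A secondary technical care is verifying that $T_{x,r}$ is genuinely normal (finite mass of $\partial T_{x,r}$), which follows because $\partial(\mathbb{1}_{B(x,r)}T)$ combines the tangential differential term from \S\ref{bordoAM} with a boundary term supported on $\Sigma\cap\partial B(x,r)$ of finite $\mathscr{H}^{h-1}$-measure for a.e. $r$; but since \S\ref{bordoAM} already asserts normality of $T_{x,r}$ for \emph{every} $r$, I may simply cite it. The last bookkeeping step — upgrading "agreement against one spanning $h$-vector field" to "agreement as covectors on the tangent space" — is routine linear algebra once the frame is allowed to vary, and can be handled by noting that the identity, being proved for $\mathscr{H}^h$-a.e.\ $y$ for each fixed choice of frame among countably many generating the relevant Grassmannian data, holds simultaneously a.e.

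**Conclusion of the write-up.** Finally I would remark, as the statement does, that the exceptional set is $\mathscr{H}^h\llcorner\Sigma$-null: it is contained in the union of the non-differentiability set of the Lipschitz form $\Phi^\#\omega$ along $\mathrm{Tan}(\Sigma,\cdot)$ (null by Rademacher applied on $\Sigma$, using that $\Sigma$ is $C^1$) and the set of non-Lebesgue-continuity points of the $L^\infty_{\mathrm{loc}}$ form $\mathrm{d}_\Sigma(\Phi^\#\omega)$ (null by the Lebesgue differentiation theorem for $\mathscr{H}^h\llcorner\Sigma$), and a countable union of such null sets over the finitely/countably many frame choices is still null.
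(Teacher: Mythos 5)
Your overall strategy is the same as the paper's --- pass to the boundary characterization of the tangential differential in \S\ref{bordoAM}, use the distributional commutation $\mathrm{d}(\Phi^\#\omega)=\Phi^\#(\mathrm{d}\omega)$ of Proposition \ref{id:distromega} together with the normal-current testing of Proposition \ref{lemmabordodistro}, and finally Lebesgue-differentiate. But as written, your argument has a genuine degree mismatch that only resolves when $h=2$. The current $T_{x,r}$ is $h$-dimensional, so $\partial T_{x,r}$ is an $(h-1)$-current and cannot be paired with the $1$-form $\Phi^\#\omega$; likewise $T_{x,r}$ cannot be paired with the $2$-form $\Phi^\#(\mathrm{d}\omega)$, nor can the $h$-vector $X_1\wedge\cdots\wedge X_h$ be paired with the $2$-form $\mathrm{d}_\Sigma(\Phi^\#\omega)$. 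Moreover, the two objects you want to identify, $\mathrm{d}_\Sigma(\Phi^\#\omega)(y)$ and $\Phi^\#(\mathrm{d}\omega)(y)\vert_{\Tan(\Sigma,y)}$, are $2$-covectors on $\Tan(\Sigma,y)$, not $h$-covectors: your ``rotating frame'' step cannot recover them, since $\Lambda_h(\Tan(\Sigma,y))$ is one-dimensional and testing against it yields only a single scalar constraint per point.

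What is missing is the contraction with constant auxiliary covectors, which the paper's proof uses precisely to cure both problems at once. For each constant $\alpha\in\Lambda^{h-2}(\R^n)$, the paper introduces the normal $2$-current $T_{y,r}^\alpha:=T_{y,r}\trace\alpha$ and tests the wedged forms: $\Phi^\#(\mathrm{d}\omega)\wedge\alpha$ is an $h$-form, $\Phi^\#\omega\wedge\alpha$ is an $(h-1)$-form, and the Leibniz rule $\mathrm{d}(\Phi^\#\omega\wedge\alpha)=\mathrm{d}(\Phi^\#\omega)\wedge\alpha$ (valid because $\alpha$ is constant) makes every step in the chain from $\langle T_{y,r},\Phi^\#(\mathrm{d}\omega)\wedge\alpha\rangle$ to $\int_{B(y,r)}\langle X_1\wedge\cdots\wedge X_h;\mathrm{d}_\Sigma(\Phi^\#\omega)\wedge\alpha\rangle\,d\Haus^h\llcorner\Sigma$ degree-correct. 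Lebesgue differentiation at $y$ then yields the identity paired against the single $2$-vector $(X_1(y)\wedge\cdots\wedge X_h(y))\trace\alpha$, and letting $\alpha$ range over $\Lambda^{h-2}(\R^n)$ sweeps these $2$-vectors over all of $\Lambda_2(\Tan(\Sigma,y))$ --- which is exactly what is needed to conclude the pointwise identity of $2$-forms on the tangent plane. Without this $\alpha$-bookkeeping, your argument proves the lemma only for $h=2$.
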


\begin{proof}
Let $X_1,\ldots,X_h$ be continuous vector fields in $\R^n$ such that 
$$\mathrm{span}(X_1(y),\ldots,X_h(y))=\mathrm{Tan}(\Sigma,y) \qquad\text{for $\mathscr{H}^h$-almost every $y\in \Sigma$.}$$
Let $B_1$ be the set of those $y\in \Sigma$ where $\Phi^\#\omega$ is differentiable along $\Tan(\Sigma,y)$, which is a set of full measure, since $\Phi^\#\omega$ is Lipschitz. Let us further notice that $\Phi^\#(d\omega)$ is a continuous $2$-form.
    For every $\alpha\in \Lambda^{h-2}(\R^n)$, $y\in \Sigma$ and $r>0$ let 
    $T_{y,r}^\alpha:= T_{y,r}\trace \alpha$, where the currents $T_{y,r}$ were introduced in \S\cref{bordoAM}. This implies in particular, thanks to Proposition \ref{id:distromega} we have
\begin{equation}
    \begin{split}
       &\scalar{T_{y,r}}{\Phi^\#(\mathrm{d}\omega)\wedge \alpha}=\scalar{T_{y,r}^\alpha}{\Phi^\#(\mathrm{d}\omega)}=\scalar{T_{y,r}^\alpha}{\mathrm{d}(\Phi^\#(\omega))}\\
       =&\scalar{T_{y,r}}{\mathrm{d}(\Phi^\#(\omega))\wedge \alpha}=\scalar{T_{y,r}}{\mathrm{d}(\Phi^\#(\omega)\wedge \alpha)}=\scalar{\partial T_{y,r}}{\Phi^\#(\omega)\wedge \alpha}.
       \nonumber
    \end{split}
\end{equation}
where the second last identity follows from the fact that $\alpha$ is constant and \cite[\S 2.8]{alberti2020geometric} and the last identity follows from Proposition \ref{lemmabordodistro}.
Furthermore, thanks to \S \ref{bordoAM} we infer that 
\begin{equation}
    \begin{split}
        \scalar{\partial T_{y,r}}{\Phi^\#\omega\wedge \alpha}=&\int_{B(y,r)} \scalar{X_1\wedge\ldots\wedge X_k}{d_\Sigma(\Phi^\#(\omega)\wedge \alpha)}\, d\mathscr{H}^k\llcorner \Sigma\\
        =&\int_{B(y,r)} \scalar{X_1\wedge\ldots\wedge X_k}{d_\Sigma(\Phi^\#\omega)\wedge \alpha}\, d\mathscr{H}^k\llcorner \Sigma.
    \end{split}
\end{equation}
We let $B_2$ be the subset of $B_1$ of those $w\in B_1$ that are Lebesgue continuity points of $\mathrm{d}_\Sigma(\Phi^\#\omega)$ with respect to $\mathscr{H}^h\llcorner \Sigma$.
Since $X_1\wedge\ldots\wedge X_k$ is a continuous $k$-vector, it is immediately seen that for 
% $\mathscr{H}^h\llcorner \Sigma$ almost 
every $y\in B_2$ we have 
$$\scalar{X_1(y)\wedge\ldots\wedge X_k(y)}{\Phi^\#(\mathrm{d}\omega)(y)\wedge \alpha}=\scalar{X_1(y)\wedge\ldots\wedge X_k(y)}{\mathrm d_\Sigma(\Phi^\#\omega)(y)\wedge \alpha}.$$
Since the choice of the vector fields $X_1,\ldots,X_k$ is arbitrary and since the above identity holds for every $y\in \Sigma$, we infer that 
$$\Phi^\#(\mathrm{d}\omega)(y)\wedge \alpha=\mathrm d_\Sigma(\Phi^\#\omega)(y)\wedge \alpha \quad\text{on $\Lambda_2(\mathrm{Tan}(\Sigma,y))$,}$$
for every $y\in B_2$ and every $\alpha\in \Lambda^{h-2}(\R^n)$. This concludes the proof of the lemma.
\end{proof}

\subsection{Rectifiability}

\begin{parag}[Rectifiability in metric spaces]\label{def:rect}
Given a metric space $(X,d)$ we say that a $\mathscr{H}^k$-measurable set $E\subseteq X$ is $k$-rectifiable if there are countably many compact set $K_i\subseteq \R^k$ and Lipschitz maps $f_i:K_i\to X$ such that 
$$\mathscr{H}^k_d\Big(E\setminus \bigcup_{i\in\N}f_i(K_i)\Big)=0.$$
Here as usual with $\mathscr{H}^k_d$ we denote is the $k$-dimensional Hausdorff measure constructed with the distance $d$ on $X$. See \cite[\S 2.10.2]{Federer1996GeometricTheory} for a reference. 
\end{parag}

\begin{parag}[Unrectifiability in metric spaces]
Let $(X,d)$ and $E$ be as in \S\ref{def:rect}. We say that $E$ is \emph{$k$-purely unrectifiable} if for every compact set $K\subseteq \R^n$ and any Lipschitz map $f:K\to X$, we have
$\mathscr{H}^k_d(f(K)\cap E)=0$.
\end{parag}

\subsection{Carnot-Carath\'eodory spaces}

Throughout this section we suppose $V$ to be a fixed distribution of $k$-planes of class $C^\infty$. 

\begin{parag}[Horizontal curves and Carnot-Carath\'eodory distance]\label{ccdistance}
An absolutely continuous curve $\gamma:[0,1]\to \R^n$ is said \emph{horizontal with respect to $V$} or simply \emph{horizontal} if, for $\Leb^1$-almost every $t\in [0,1]$, one has
$$\gamma^\prime(t)\in V(\gamma(t)).$$
For every $x,y\in\Omega$, we define \emph{Carnot-Carath\'eodory extended distance} as
\begin{equation} 
    d_V(x,y):=\inf\{\ell(\gamma):\gamma\text{ is a horizontal curve, }\gamma(0)=x\text{, }\gamma(1)=y\}.\nonumber
\end{equation}
where $\ell$ is the Euclidean length of $\gamma$.
\end{parag}

\begin{parag}[\cc spaces]\label{par:hor}
We say that $V$ satisfies the \emph{H\"ormander condition}, if at every point $x$ of $\R^n$, there exists a neighborhood $U$ of $x$, $N(x)\in\N$ and 
vector fields $X_1,\ldots, X_k$ spanning $V$ in $U$
such that the elementary commutators of $X_1,\ldots, X_k$ of length at most $N(x)$ span $\R^n$ at $x$. An important consequence of the H\"ormander condition is the following well known result. 

\begin{theorem}[{[Chow-Rashevskii, see \cite[p.95, \S 0.4]{Gromov1996Carnot-CaratheodoryWithin}]}]\label{CR}
Suppose there exists a system of $k$ vector fields
$\mathcal{X}$ spanning $V$ that satisfies the H\"ormander condition. Then, every couple of points $x,y$ in $\R^n$ can be joined by a horizontal curve. In particular the extended distance $d_V$ is actually a distance.
\end{theorem}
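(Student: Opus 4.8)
The plan is to prove the reachability statement first and then deduce that $d_V$ is a genuine distance. Write
\[
\mathcal O(p):=\{q\in\R^n:\ \exists\ \text{a horizontal curve joining }p\text{ to }q\}.
\]
Since horizontal curves can be reparametrized, reversed, and concatenated, $\mathcal O(\cdot)$ defines an equivalence relation on $\R^n$, and it suffices to show that each class $\mathcal O(p)$ is open: the classes then partition the connected space $\R^n$ into open sets, so there is only one class, which is the first assertion. The ``in particular'' is then routine: concatenation gives the triangle inequality, reversal gives symmetry, the constant curve gives $d_V(x,x)=0$, and since the Euclidean length of any curve from $x$ to $y$ is at least $|x-y|$ we get $d_V(x,y)\ge|x-y|$, so $d_V(x,y)=0$ forces $x=y$; finiteness of $d_V$ is exactly the reachability just proved, a joining curve being obtainable as a finite concatenation of integral curves of the $X_i$'s.

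To prove openness, the first and main step is to show $\mathcal O(p)$ has non-empty interior (this is essentially the content of Sussmann's orbit theorem in our bracket-generating situation). Fix $p$, a neighbourhood $U$, and a frame $X_1,\dots,X_k$ of $V$ on $U$ whose elementary commutators of length at most $N$ span $\R^n$ at $p$. Denoting by $\Phi^X_s$ the time-$s$ flow of $X$, consider the family of \emph{polyflow maps}
\[
\phi(t_1,\dots,t_m):=\Phi^{X_{i_m}}_{t_m}\circ\cdots\circ\Phi^{X_{i_1}}_{t_1}(p),
\]
over all $m\in\N$, all index sequences $i_1,\dots,i_m\in\{1,\dots,k\}$, and small $t$; every value $\phi(t)$ lies in $\mathcal O(p)$, being reached by concatenating integral curves of the horizontal fields $X_{i_j}$ (allowing both signs of the parameters). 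Let $r$ be the maximal rank of $\mathrm d\phi$ over all such maps and all parameter values, attained at some $\phi$ and $\bar t$; by lower semicontinuity of the rank, $\mathrm d\phi$ has constant rank $r$ near $\bar t$, so $\phi$ maps a neighbourhood of $\bar t$ onto an $r$-dimensional embedded submanifold $S\subseteq\mathcal O(p)$ through $\bar q:=\phi(\bar t)$.

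The heart of the proof is to show $r=n$. If some $X_{i_0}$ failed to be tangent to $S$ at a point $\phi(t_1)\in S$ (with $t_1$ near $\bar t$), then appending the flow $\Phi^{X_{i_0}}_s$ to $\phi$ would give a polyflow whose differential at $(t_1,0)$ has image $\Tan(S,\phi(t_1))+\R\,X_{i_0}(\phi(t_1))$, of rank $r+1$, contradicting maximality; hence each $X_i$ restricts to a vector field tangent to $S$ in a neighbourhood of $\bar q$. Since the Lie bracket of two vector fields tangent to a submanifold is again tangent to it, every iterated bracket of $X_1,\dots,X_k$ is tangent to $S$ along $S$, so its value at $\bar q$ lies in $\Tan(S,\bar q)$. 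By the H\"ormander condition these values span $\R^n$, forcing $\Tan(S,\bar q)=\R^n$, i.e. $r=n$ and $S$ open; thus $\mathcal O(p)$ — and by the same argument every $\mathcal O(q)$ — has non-empty interior.

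It remains to upgrade ``non-empty interior'' to ``open''. Let $q\in\mathcal O(p)$; apply the previous step with base point $q$ to obtain a polyflow $\phi(\cdot\,;q)$ and a parameter $\bar t$ with $\bar q:=\phi(\bar t;q)$ lying in an open set $S\subseteq\mathcal O(q)$. Viewed as a function of the base point, $q'\mapsto\phi(\bar t;q')$ is a composition of flow diffeomorphisms, hence a diffeomorphism of a neighbourhood $W$ of $q$ onto a neighbourhood of $\bar q$; shrinking $W$ so that $\phi(\bar t;W)\subseteq S$, each $q'\in W$ satisfies both $\phi(\bar t;q')\in\mathcal O(q')$ (polyflow evaluation) and $\phi(\bar t;q')\in S\subseteq\mathcal O(q)$, whence $\mathcal O(q')=\mathcal O(\phi(\bar t;q'))=\mathcal O(q)=\mathcal O(p)$. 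Thus $W\subseteq\mathcal O(p)$ and $q$ is an interior point, so $\mathcal O(p)$ is open. I expect the two delicate points to be the constant-rank/maximality bookkeeping producing the submanifold $S$ and the verification that bracket-generation transfers to tangency of $S$; the rest is the standard equivalence-relation-and-connectedness packaging.
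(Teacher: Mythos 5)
The paper does not prove this theorem: it is stated with a citation to Gromov, so there is no internal proof to compare against and your proposal must stand on its own.

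Your argument is the classical orbit-theorem proof of Chow--Rashevskii (reduce to showing each orbit $\mathcal O(p)$ is open, produce an $r$-dimensional submanifold $S$ via a maximal-rank polyflow, and show $r=n$ by bracket generation), and the overall structure is sound. The one step that is not justified as written is the invocation of the H\"ormander condition. You set it up at the base point $p$ (a frame $X_1,\dots,X_k$ on a neighbourhood $U$ of $p$ whose commutators of length at most $N$ span $\R^n$ \emph{at $p$}), but then conclude that the iterated brackets of the $X_i$ span $\R^n$ \emph{at $\bar q=\phi(\bar t)$}, which need not equal $p$. The fix is short but should be said: the paper's H\"ormander condition is a pointwise hypothesis holding at every point of $\R^n$, so at $\bar q$ there is some local frame $\tilde X_1,\dots,\tilde X_k$ of $V$ whose commutators span $\R^n$ at $\bar q$. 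Since you have already shown that every $X_i$ --- hence $V$ itself --- is tangent to $S$ near $\bar q$, every vector field tangent to $V$ (in particular each $\tilde X_i$) is tangent to $S$, and so are all its iterated brackets; thus $\Tan(S,\bar q)=\R^n$. (Equivalently, one can note that the pointwise span of the Lie algebra generated by a spanning frame of $V$ is independent of the choice of frame, so bracket-generation transfers to the $X_i$.) Without this remark, the leap from ``H\"ormander at $p$'' to ``brackets of the $X_i$ span $\R^n$ at $\bar q$'' is a gap.

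A minor side remark, not a gap: for finiteness of $d_V$ in the ``in particular'' step you do not need the joining curve to be a finite concatenation of integral curves --- any horizontal curve is, by the paper's definition, absolutely continuous on $[0,1]$ and hence of finite Euclidean length, so reachability alone suffices. The rest of the ``in particular'' (triangle inequality by concatenation, symmetry by reversal, nondegeneracy from $d_V(x,y)\ge|x-y|$) is correct.
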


The metric spaces of type $(\R^n,d_V)$ are commonly known as \emph{\cc spaces}. Note that in all the above statements and definitions we could harmlessly replace $\R^n$  with an open and connected set. However, since we are interested essentially in local properties of our space for the sake of exposition we limit ourselves to the discussion of \cc spaces defined on $\R^n$.
\end{parag}

\begin{parag}[Exponential map and shape of \cc balls]\label{par:BBT}

Suppose $V$ satisfies the H\"ormander condition and let $\mathcal{X}=\{X_1,\ldots,X_k\}$ be a system of $k$ vector fields spanning $V$. 
Since $V$ satisfies the H\"ormander condition, this implies in particular that for every $x\in\R^n$ there is an open neighbourhood $U_x$ of $x$, which we refer to as the \emph{patch centered at }$x$, and $n-k$ vector fields $X_{k+1}\ldots, X_n$, which are simple commutators of $\mathcal{X}$ of length less than $N(x)$ 
such that $X_1(\zeta),\ldots,X_n(\zeta)$ are linearly independent at every $\zeta\in \text{cl}(U_x)$.

For every $\zeta\in U_x$ we let $\mathrm{Exp}_\zeta: U(0,\newr\label{r:4})\to\R^n$ be the function:
$$\mathrm{Exp}_\zeta(t_1,\ldots,t_n)=\Phi_{\sum_{j=1}^nt_jX_j}(\zeta,1),$$
where $\Phi_{\sum_{j=1}^nt_jX_{j}}(\zeta,1)$ is the flow of the vector field $\sum_{j=1}^nt_jX_j$ starting at $\zeta$ and computed at time $1$ and $\oldr{r:4}=\oldr{r:4}(x)$ is a sufficiently small positive constant for which $\Phi_{\sum_{j=1}^nt_jX_{j}}(\zeta,1)$ is well defined for every $\sum_{i}^n\lvert t_i\rvert\leq\oldr{r:4}$. 
The map $\mathrm{Exp}_\zeta$ is smooth, since the vector fields $X_1,\ldots, X_k$ are smooth, and hence for every $\zeta\in U_x$  and every $\lvert t\rvert\leq \oldr{r:4}$, there is $s\in[0,1]$ such that
\begin{equation}
\mathrm{Exp}_\zeta(t)=\mathrm{Exp}_\zeta(0)+\sum_{i=1}^n t_iX_i(\zeta)+\frac{\mathrm{d}^2\Exp_\zeta(st)[t,t]}{2}.
    \label{eq:10}
\end{equation}
By construction, the operator norm of $\mathrm{d}\Exp_{\zeta}(0)$ is (uniformly in $\zeta$) bounded away from $0$ on $U_x$, and thus thanks to the Inverse Function Theorem, there is $\oldr{nr1}=\newr\label{nr1}(x)>0$ such that for every $0<\rho<\oldr{nr1}$, there exists $\delta(\rho)>0$ for which
\begin{equation}
    U(\zeta,\delta(\rho))\subseteq \Exp_{\zeta}(U(0,\rho)),\qquad\text{whenever }\zeta\in U_x.
    \label{exp:open}
\end{equation}

The following theorem is commonly known as the Ball-Box theorem and it characterises the structure of small \cc balls. For a proof we refer for instance to \cite[Theorem 7]{nagel}. Here below we state the structure theorem for balls in a simpler way that is sufficients for our scopes.

\begin{theorem}[{\cite[p.98, \S 0.5.A]{Gromov1996Carnot-CaratheodoryWithin}}]\label{BallBox}
In the notations above, for every $x\in\Omega$ there are constants $\newC\label{C:1}=\oldC{C:1}(x)>1$ and $\newr\label{r:7}=\oldr{r:7}(x)>0$  such that:
$$
\Exp_\zeta(Q(\oldC{C:1}^{-1}\rho))\subseteq B_{d_V}(\zeta,\rho)\subseteq \Exp_\zeta(Q(\oldC{C:1}\rho)),\qquad\text{ for every }0<\rho<\oldr{r:7},$$
whenever $\zeta\in U_x$ and where $Q(\rho)$ is the \emph{anisotropic box} of side $\rho>0$, i.e.: 
$$Q(\rho):=\big\{t\in\R^n:\lvert t_j \rvert\leq \rho^{\text{deg}X_j}\text{ for every }j=1,\ldots,n\big\}.$$
\end{theorem}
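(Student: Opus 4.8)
This is the classical Ball--Box estimate (see Nagel--Stein--Wainger \cite{nagel} or Gromov \cite{Gromov1996Carnot-CaratheodoryWithin}), which for the write-up one would simply cite; here is how I would argue it. First I would fix $x$ and work on the compact set $\mathrm{cl}(U_x)$: since $X_1,\dots,X_n$ are smooth and pointwise linearly independent there, all constants produced below are uniform in $\zeta\in U_x$, which gives the uniformity claimed in the statement. I would also take the frame to be adapted to the flag $V=V^1\subseteq V^2\subseteq\cdots$ generated by $V$, so that $w_j:=\deg X_j$ is the correct weight of the $j$-th coordinate, and abbreviate $\|t\|:=\max_{1\le j\le n}|t_j|^{1/w_j}$, so that $Q(\rho)=\{t:\|t\|\le\rho\}$. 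The two inclusions are then treated separately.

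For the inner inclusion $\Exp_\zeta(Q(\oldC{C:1}^{-1}\rho))\subseteq B_{d_V}(\zeta,\rho)$ I would use the commutator--flow trick: if $Y$ is an elementary commutator of $\mathcal X$ of degree $d$, then for small $\tau$ the time-$\tau$ flow of $Y$ from any $p\in\mathrm{cl}(U_x)$ coincides, up to an error of order higher than $|\tau|^{1/d}$, with the endpoint of a concatenation of boundedly many flows of the horizontal fields, each run for time $O(|\tau|^{1/d})$, hence of a horizontal curve of Euclidean length $O(|\tau|^{1/d})$ (for $d=2$ this is the usual four-fold commutator of time-$\sqrt{|\tau|}$ flows; for $d>2$ one iterates it more carefully, as in \cite{nagel}). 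Concatenating over the $n$ coordinates I would thus build, for every $t$ of small weighted norm, a horizontal curve from $\zeta$ of length $\lesssim\sum_j|t_j|^{1/w_j}\le n\|t\|$ ending at a point $\Psi_\zeta(t)$, where the box map $\Psi_\zeta$ is a local chart with the same weighted structure as $\Exp_\zeta$; the accumulated higher-order errors are absorbed by a contraction argument based on the Taylor expansion \eqref{eq:10} of $\Exp_\zeta$. Finally, any two such adapted charts at $\zeta$ differ by a near-identity diffeomorphism distorting $\|\cdot\|$ by at most a constant factor, so $\Exp_\zeta(Q(\rho'))\subseteq\Psi_\zeta(Q(C'\rho'))$; combining this with the length bound for $\Psi_\zeta$ gives the inner inclusion once $\oldC{C:1}$ is taken large and $\oldr{r:7}$ small.

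For the outer inclusion $B_{d_V}(\zeta,\rho)\subseteq\Exp_\zeta(Q(\oldC{C:1}\rho))$ I would pass to the coordinates $t=\Exp_\zeta^{-1}(\cdot)$ and exploit that they are \emph{privileged}: each horizontal field $X_i$ has weighted order $\ge-1$ at $0$, i.e.\ writing $X_i=\sum_{j=1}^n a_i^j(t)\,\partial_{t_j}$ one has $|a_i^j(t)|\le C\|t\|^{w_j-1}$ near $0$, uniformly in $\zeta\in\mathrm{cl}(U_x)$. Granting this, for $p$ with $d_V(\zeta,p)<\rho$ I would pick a horizontal curve $\gamma$ from $\zeta$ to $p$ of Euclidean length $\ell<\rho$, write $\gamma'=\sum_{i\le k}c_i\,X_i(\gamma)$ with $\sum_i|c_i|\le C|\gamma'|$ (uniform linear independence of the frame), and deduce in the coordinates $t(s):=\Exp_\zeta^{-1}(\gamma(s))$ the differential inequality $|\dot t_j(s)|\le C\,|\gamma'(s)|\,\|t(s)\|^{w_j-1}$. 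A Gronwall-type bootstrap on the pseudo-norm $\|t(\cdot)\|$, handling each $|t_j|^{1/w_j}$ separately, then gives $\|t(s)\|\le C\int_0^s|\gamma'|\le C\ell$, hence $p=\Exp_\zeta(t(1))\in\Exp_\zeta(Q(C\ell))\subseteq\Exp_\zeta(Q(C\rho))$; since $Q$ is closed and $\Exp_\zeta$ continuous, letting $\ell\to d_V(\zeta,p)$ and renaming $\oldC{C:1}$ yields the inclusion.

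The main obstacle is the outer inclusion, and within it the privileged-coordinate bound $|a_i^j(t)|\le C\|t\|^{w_j-1}$: establishing it uniformly over the patch is an involved computation with iterated brackets and the transformation rule for vector fields under $\Exp_\zeta$ (the vanishing orders being forced precisely because $X_{k+1},\dots,X_n$ are iterated brackets of the horizontal fields), and one must also be careful with the non-smoothness of $\|\cdot\|$ in the Gronwall step; the careful higher-degree version of the commutator trick needed for the inner inclusion is a secondary technical point. A conceptually cleaner route handling both inclusions at once is to introduce the nilpotent approximation of $(X_1,\dots,X_k)$ at $\zeta$---for which the estimate is immediate by homogeneity and compactness of the unit sphere---and to transfer it to $d_V$ by a uniform comparison of the flows; either way this is exactly the content of \cite{nagel} and \cite[\S 0.5]{Gromov1996Carnot-CaratheodoryWithin}.
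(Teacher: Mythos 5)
The paper gives no proof of this statement: Theorem~\ref{BallBox} is imported verbatim from Gromov and Nagel--Stein--Wainger, with the remark ``For a proof we refer for instance to \cite[Theorem 7]{nagel}'' preceding it, which is exactly the disposition your opening sentence proposes. Your sketch (flow-commutator trick and length-counting for the inner inclusion, privileged-coordinate vanishing estimates plus a Gronwall bootstrap for the outer inclusion, with the nilpotent approximation as the cleaner alternative) is a faithful summary of what those cited references actually do, so there is nothing to reconcile.
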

\end{parag}

\subsection{Kirchheim's metric area formula}

In this subsection we briefly recall the notations and some few results necessary to state the area formula B. Kirchheim proved in \cite{Kirchheimarea}. In the following we assume that $f:\R^m\to(X,\lVert\cdot\rVert)$ is a Lipschitz map taking values in some Banach space $(X,\lVert \cdot\rVert)$. The following proposition, shows that the partial metric derivatives of $f$ along a fixed vector exist $\Leb^m$-almost everywhere.

\begin{proposition}[(Proposition 1, \cite{Kirchheimarea})]\label{prop:parder}
Let $f:\R^m\to(X,\lVert\cdot\rVert)$ be a Lipschitz map and suppose $u\in\mathbb{S}^{m-1}$. Then, for almost every $x\in\R^m$, $\lim_{r\to0}\lVert f(x+ru)-f(x)\rVert/r$ exists.
\end{proposition}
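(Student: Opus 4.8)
The plan is to reduce the statement to the one-dimensional case, i.e.\ to Lipschitz curves $\gamma\colon\R\to X$, and then to prove metric differentiability of such curves by comparing them with a primitive of a suitable measurable function. First, $f(\R^m)$ is separable, being a continuous image of $\R^m$, so we may assume $X$ separable. Fix $u\in\mathbb{S}^{m-1}$, complete it to an orthonormal basis of $\R^m$, and write $x=tu+y$ with $t\in\R$ and $y\in u^\perp\cong\R^{m-1}$; for each fixed $y$ the curve $t\mapsto f(tu+y)$ is Lipschitz. Granting that every Lipschitz curve into $X$ has a metric derivative $\lim_{s\to0}\|\gamma(t+s)-\gamma(t)\|/|s|$ at $\Leb^1$-almost every point, the set $N$ of points where the metric derivative of $f$ along $u$ does not exist meets each line parallel to $u$ in an $\Leb^1$-null set; since $N$ is Borel (it is described by $\limsup$ and $\liminf$ of difference quotients that depend continuously on $x$, so the relevant suprema over small $r$ may be restricted to rational $r$), Fubini gives $\Leb^m(N)=0$, which is the assertion of the proposition.

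Now let $\gamma\colon\R\to X$ be Lipschitz with $X$ separable. Fix a countable dense set $\{z_j\}_{j\in\N}$ in $\gamma(\R)$ and put $\phi_j(t):=\|\gamma(t)-z_j\|$; each $\phi_j$ is real valued and Lipschitz with $\Lip(\phi_j)\le\Lip(\gamma)$, hence differentiable $\Leb^1$-almost everywhere, and we set $g(t):=\sup_j|\phi_j'(t)|$, a measurable function bounded by $\Lip(\gamma)$. Since $|\phi_j(t+s)-\phi_j(t)|\le\|\gamma(t+s)-\gamma(t)\|$, at every common differentiability point of the $\phi_j$ we get $g(t)\le\liminf_{s\to0}\|\gamma(t+s)-\gamma(t)\|/|s|$. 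The crucial reverse estimate is the global inequality
$$\|\gamma(s)-\gamma(t)\|\le\Big|\int_t^s g(\tau)\,d\tau\Big|\qquad\text{for all }s,t\in\R .$$
Granting it, the Lipschitz primitive $V(t):=\int_0^t g$ satisfies $V'(t)=g(t)$ at every Lebesgue point of $g$, whence $\limsup_{s\to0}\|\gamma(t+s)-\gamma(t)\|/|s|\le g(t)$ there; together with the lower bound this shows that $\lim_{s\to0}\|\gamma(t+s)-\gamma(t)\|/|s|$ exists and equals $g(t)$ for $\Leb^1$-almost every $t$, which completes the one-dimensional case and hence the proposition.

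It remains to prove the global inequality, which is the only genuinely non-routine point. Fix $t<s$ and $\varepsilon>0$, and choose $j$ with $\|\gamma(s)-z_j\|<\varepsilon$; then $\|\gamma(s)-\gamma(t)\|\le\phi_j(t)+\phi_j(s)$, and by the fundamental theorem of calculus for $\phi_j$ we have $\phi_j(t)=\phi_j(s)-\int_t^s\phi_j'\le\phi_j(s)+\int_t^s|\phi_j'|\le\varepsilon+\int_t^s g$, so that $\|\gamma(s)-\gamma(t)\|<2\varepsilon+\int_t^s g$; letting $\varepsilon\to0$ proves the claim. Everything else used---the separability reduction, Fubini, almost everywhere differentiability of real Lipschitz functions, and differentiation of primitives---is classical. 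An equivalent route replaces the distance functions $\phi_j$ by a countable norming family of functionals $x_k^*\in X^*$ with $\|x_k^*\|\le1$ and $\sup_k x_k^*(v)=\|v\|$ for all $v\in X$, setting $g:=\sup_k|(x_k^*\circ\gamma)'|$; the argument is essentially the same.
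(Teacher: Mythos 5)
The paper does not prove this proposition: it is stated as a direct citation of Kirchheim's Proposition~1 in \cite{Kirchheimarea}, so there is no in-paper proof to compare against. Your argument is correct and is in fact the standard Kirchheim--Ambrosio proof of metric differentiability: reduce to the one-dimensional case by Fubini (after checking that the exceptional set is Borel, which you do by restricting to rational $r$), introduce the auxiliary Lipschitz functions $\phi_j(t)=\lVert \gamma(t)-z_j\rVert$ for $z_j$ in a countable dense subset of the (separable) image, set $g=\sup_j\lvert\phi_j'\rvert$, obtain the lower bound from the reverse triangle inequality, and obtain the upper bound from the integral inequality $\lVert\gamma(s)-\gamma(t)\rVert\le\int_t^s g$ at Lebesgue points of $g$; all the intermediate steps (measurability of $N$, FTC for Lipschitz $\phi_j$, the $\varepsilon$-argument for the global inequality) check out.
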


For every $x,u\in\R^m$ we let
$$MD(f,x)[u]:=\lim_{r\to 0}\frac{\lVert f(x+ru)-f(x)\rVert}{r},$$
whenever this limit exists. Thanks to Proposition \ref{prop:parder}, we know that $MD(f,x)[u]$ exists for any $u\in\mathbb{S}^{m-1}$ and  $\Leb^m$-almost every $x\in\R^n$, and in this case we say that $MD(f,x)$ is the \emph{metric differential} of $f$ at $x$. The following result is a metric version of the Rademacher's theorem.

\begin{theorem}[(Theorem 2, \cite{Kirchheimarea})]\label{metricrademacher}
Let $f:\R^m\to(X,\lVert \cdot\rVert)$ be a Lipschitz map. Then, for almost every $x\in\R^m$, $MD(f,x)[\cdot]$ is a seminorm on $\R^m$ and:
\begin{equation}
\lVert f(z)-f(y)\rVert-MD(f,x)[z-y]=o(\lvert z-x\rvert+\lvert y-x\rvert).
\label{eq:1031}
\end{equation}
\end{theorem}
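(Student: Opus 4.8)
The plan is to follow B.~Kirchheim's original argument, whose two engines are a countable dense set of directions and repeated applications of Fubini's theorem together with Lebesgue differentiation. Fix a symmetric countable dense set $D\subseteq\mathbb{S}^{m-1}$ (so $D=-D$). Applying Proposition~\ref{prop:parder} to each $u\in D$ and intersecting the countably many full-measure sets, we obtain a set $G$ with $\Leb^m(\R^m\setminus G)=0$ such that $MD(f,x)[u]=\lim_{r\to0}\lVert f(x+ru)-f(x)\rVert/r$ exists for every $x\in G$ and every $u\in D$. For fixed $x\in G$ the reverse triangle inequality gives $\bigl|\lVert f(x+ru)-f(x)\rVert-\lVert f(x+ru')-f(x)\rVert\bigr|\le \Lip(f)\,r\,|u-u'|$, so $u\mapsto MD(f,x)[u]$ is $\Lip(f)$-Lipschitz on $D$ and admits a unique $\Lip(f)$-Lipschitz, positively $1$-homogeneous extension to $\R^m$, still written $MD(f,x)[\cdot]$; a squeezing argument shows that for $x\in G$ this extension equals $\lim_{r\to0}\lVert f(x+rw)-f(x)\rVert/r$ for \emph{every} $w\in\R^m$.

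Next I would show that $MD(f,x)[\cdot]$ is a seminorm for a.e.\ $x$; since homogeneity is built in, only subadditivity and symmetry require work, and both come from one averaging lemma. For $u\in D$ put $\phi^u_t(y):=\lVert f(y+tu)-f(y)\rVert/t$ and $\psi^u(y):=MD(f,y)[u]$. As $\phi^u_t\to\psi^u$ a.e.\ and $\lvert\phi^u_t\rvert\le\Lip(f)$, the convergence also holds in $L^1_{\mathrm{loc}}$; changing variables $y\mapsto y+tw$ and using continuity of translations in $L^1$, one gets $\phi^u_t(\cdot+tw)\to\psi^u$ in $L^1_{\mathrm{loc}}$ for each $w\in D\cup\{0\}$, hence along a sequence $t_j\downarrow0$ (depending on $x$ and on $(u,w)$) one has $\phi^u_{t_j}(x+t_jw)\to\psi^u(x)$ for a.e.\ $x$. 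Taking a countable union over the pairs $(u,w)\in D\times D$ produces a full-measure set $G'\subseteq G$ on which this holds for all such pairs. On $G'$, for $u,v\in D$, dividing the triangle inequality $\lVert f(x+t(u+v))-f(x)\rVert\le\lVert f(x+tu+tv)-f(x+tu)\rVert+\lVert f(x+tu)-f(x)\rVert$ by $t$ and passing to the limit along a suitable $t_j$ yields $MD(f,x)[u+v]\le MD(f,x)[v]+MD(f,x)[u]$, while taking $w=-u$ gives $MD(f,x)[-u]=MD(f,x)[u]$. By continuity of the extension, $MD(f,x)[\cdot]$ is subadditive and symmetric, hence a seminorm, on $G'$.

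It remains to prove the infinitesimal estimate \eqref{eq:1031}. I would first establish the one-point version: for $x\in G'$, $\lVert f(x+v)-f(x)\rVert=MD(f,x)[v]+o(|v|)$ as $v\to0$, \emph{uniformly in the direction of} $v$. Indeed, given $\varepsilon>0$ choose a finite $\tfrac{\varepsilon}{4\Lip(f)}$-net $D_\varepsilon\subseteq D$ of $\mathbb{S}^{m-1}$; for small $v$, replacing $v/|v|$ by the nearest $u\in D_\varepsilon$, using the $\Lip(f)$-Lipschitz dependence on direction and the fact that $\lVert f(x+ru)-f(x)\rVert/r\to MD(f,x)[u]$ \emph{uniformly over the finite set} $D_\varepsilon$, one gets $\bigl|\lVert f(x+v)-f(x)\rVert-MD(f,x)[v]\bigr|\le\varepsilon|v|$. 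For the two-point statement one must compare $MD(f,y)[\cdot]$ with $MD(f,x)[\cdot]$ for $y$ near $x$: starting from $\lVert\phi^u_t-\psi^u\rVert_{L^1(\R^m)}=:\eta_u(t)\to0$, the identity $\int_{\R^m}\Bigl(\tfrac{1}{|B(x,Ct)|}\int_{B(x,Ct)}|\phi^u_t-\psi^u|\Bigr)dx=\eta_u(t)$ gives that $x\mapsto\tfrac{1}{|B(x,Ct)|}\int_{B(x,Ct)}|\phi^u_t-\psi^u|$ tends to $0$ in $L^1(\R^m)$, hence along a sequence $t_j\downarrow0$ for a.e.\ $x$; combined with $x$ being a Lebesgue point of every $\psi^u$ ($u\in D_\varepsilon$), this shows $\tfrac{1}{|B(x,Ct_j)|}\int_{B(x,Ct_j)}\bigl|\phi^u_{t_j}(y)-MD(f,x)[u]\bigr|\,dy\to0$. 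A Fubini/Chebyshev step over the positions $y$ in this ball, again reducing the direction of $z-y$ to the finite net $D_\varepsilon$ by Lipschitzness, then upgrades the one-point estimate to $\bigl|\lVert f(z)-f(y)\rVert-MD(f,x)[z-y]\bigr|\le\varepsilon(|z-x|+|y-x|)$ for $y,z$ close to $x$, i.e.\ to \eqref{eq:1031}.

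The main obstacle is precisely this last step. The pointwise facts available at that stage give $MD(f,y)[\cdot]$ only for a.e.\ $y$, whereas \eqref{eq:1031} must hold at a \emph{fixed} good point $x$ for \emph{all} nearby $y,z$, and the error in the one-point estimate at $y$ is not uniform as $y$ varies. Getting around this forces the scale-dependent Lebesgue-differentiation estimate above and an averaging argument over admissible positions, and it is here that both the symmetry of $MD(f,x)[\cdot]$ and the countable-dense-directions reduction are genuinely needed; controlling the non-uniformity of the $o(|v|)$ is the delicate heart of the proof.
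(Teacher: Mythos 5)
Note first that the paper does not prove this statement: it is Theorem~2 of \cite{Kirchheimarea}, cited without proof, so the appropriate comparison is with Kirchheim's original argument. Your outline tracks that argument faithfully in its first two thirds: passing through a countable dense set of directions $D$, using Proposition~\ref{prop:parder} plus the reverse triangle inequality to get a $\Lip(f)$-Lipschitz function of the direction, and establishing subadditivity and symmetry via the $L^1_{\mathrm{loc}}$--subsequence device applied to the translated difference quotients $\phi^u_t(\cdot+tw)$. The ``one-point'' estimate $\lVert f(x+v)-f(x)\rVert = MD(f,x)[v]+o(|v|)$, uniform in the direction of $v$ via a finite $\varepsilon$-net in $D$, is also correct and is a genuine stepping stone in Kirchheim's proof.

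The gap is exactly where you flag it, and the Fubini/Chebyshev averaging you sketch does not close it. The quantity you control, $\frac{1}{|B(x,Ct_j)|}\int_{B(x,Ct_j)}|\phi^u_{t_j}(y)-MD(f,x)[u]|\,dy$ along a subsequence $t_j\downarrow0$, couples the scale of the increment ($t_j$) to the radius of the averaging ball; but in \eqref{eq:1031} the increment $\lVert f(z)-f(y)\rVert$ is taken at a scale $s:=|z-y|$ that is decoupled from, and possibly much smaller than, $r:=|z-x|+|y-x|$, and is needed at a \emph{specific} $y$, not for ``most'' $y$. What Kirchheim actually does is, for each direction $u$ in a finite net and each $\lambda>0$, to introduce
\begin{equation*}
A(u,\lambda):=\Bigl\{w:\ \bigl|\lVert f(w+su)-f(w)\rVert - s\,MD(f,w)[u]\bigr|\le \varepsilon s\ \text{ for all }\ 0<s<\lambda\Bigr\},
\end{equation*}
take $x$ to be simultaneously a Lebesgue density point of some $A(u,\lambda)\ni x$ and a point of approximate continuity of $MD(\cdot)[u]$ for each $u$ in the net, and then, given $y,z$ near $x$, locate $y'\in A(u,\lambda)$ within distance $\delta s$ of $y$ with $|MD(f,y')[u]-MD(f,x)[u]|\le\varepsilon$; the chain
\begin{equation*}
\lVert f(z)-f(y)\rVert \approx \lVert f(y'+su)-f(y')\rVert\approx s\,MD(f,y')[u]\approx s\,MD(f,x)[u]\approx MD(f,x)[z-y]
\end{equation*}
then gives the bound. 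One must also split off the regime $s\le \varepsilon r$, where the crude bound $\bigl|\lVert f(z)-f(y)\rVert - MD(f,x)[z-y]\bigr|\le 2\Lip(f)\,s\le 2\Lip(f)\,\varepsilon r$ suffices; without this split, positive density of $A(u,\lambda)$ near $x$ cannot guarantee a good $y'$ within $\delta s$ of $y$ when $s\ll r$. Your proposal names the right ingredients (dense directions, Lebesgue points, density/approximate continuity), but the actual mechanism --- the sets $A(u,\lambda)$, the selection of $y'$, and the case split in $s/r$ --- is left unexecuted, and that is where the theorem is actually proved.
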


\begin{theorem}\label{areaformula}
Let $f:\R^m\to (X,\lVert \cdot\rVert)$ be a Lipschtiz map, and $A\subseteq \R^m$ be a Lebesgue measurable set. Then
\begin{equation}
    \int \mathfrak{J}f(x)\Leb^m(x)=\int_X N(f\rvert_A,x)d\Haus^m_{\lVert \cdot\rVert}(x)
    \nonumber
\end{equation}
where $N(f\rvert_A,x)$ denotes the cardinality of the set $A\cap f^{-1}(x)$ and
$$\mathfrak{J}f(x):=m\Leb^m(B_1(0))\bigg(\int_{\mathbb{S}^{m-1}} (MD(f,x)[u])^{-m}d\Haus^{m-1}(u)\bigg)^{-1}.$$
\end{theorem}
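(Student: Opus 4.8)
The plan is to invoke the metric Rademacher theorem (Theorem~\ref{metricrademacher}) to cut $A$, up to negligible sets, into countably many pieces on which $f$ is bi-Lipschitz and almost isometric relative to a \emph{fixed} norm on $\R^m$; to run a bi-Lipschitz change of variables on each piece; and to recognise $\mathfrak{J}f(x)$ as the density, with respect to $\Leb^m$, of the Hausdorff measure associated with the seminorm $MD(f,x)$.

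First I would dispose of the degenerate locus $Z\subseteq A$, the set of $x$ at which $MD(f,x)$ either fails to exist or exists with a nontrivial kernel. By Theorem~\ref{metricrademacher} the first case occurs on a $\Leb^m$-null set, which $f$ maps to a $\Haus^m_{\lVert\cdot\rVert}$-null set since $f$ is Lipschitz. On the rest of $Z$ the integrand vanishes, $\mathfrak{J}f\equiv0$, because $\int_{\mathbb{S}^{m-1}}(MD(f,x)[u])^{-m}\,d\Haus^{m-1}(u)=+\infty$ whenever $MD(f,x)$ vanishes on a subspace of positive dimension; and $\Haus^m_{\lVert\cdot\rVert}(f(Z))=0$, since covering $Z$ by small cubes $Q$ and using \eqref{eq:1031} one sees that $f(Q)$ lies in a set whose extent in some direction is $o(\mathrm{diam}\,Q)$, so its $\Haus^m$-premeasure is $o((\mathrm{diam}\,Q)^m)$; summing over a fine cover gives the claim. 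Thus neither side of the identity is affected by replacing $A$ with $A\setminus Z$, and we may assume $MD(f,x)$ is a norm for every $x\in A$.

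The core consists of the decomposition together with the Jacobian of a fixed norm. Fix $\varepsilon>0$. The cone of seminorms on $\R^m$ being separable for the operator norm, pick a countable $\varepsilon$-net $\{s_k\}$ of nondegenerate norms, and for $k,i\in\N$ let $E_{k,i}$ be the set of $x\in A$ with $|MD(f,x)-s_k|\le\varepsilon$ and $\bigl|\,\lVert f(x+v)-f(x)\rVert-MD(f,x)[v]\,\bigr|\le\varepsilon|v|$ for all $|v|\le 1/i$; by Theorem~\ref{metricrademacher} and Egorov's theorem these sets cover $\Leb^m$-almost every point of $A$. Intersecting them with a grid of cubes of side $<1/i$, discarding overlaps, and refining according to $\inf_{|u|=1}MD(f,x)[u]$ so as to control Euclidean-equivalence constants, one gets a countable Borel partition $\{E_j\}$ of $A$ (mod $\Leb^m$-null) and norms $s_j$ with
\[
(1-\eta_\varepsilon)\,s_j(z-y)\ \le\ \lVert f(z)-f(y)\rVert\ \le\ (1+\eta_\varepsilon)\,s_j(z-y)\qquad\text{for all }y,z\in E_j,
\]
where $\eta_\varepsilon\to0$ as $\varepsilon\to0$; in particular $f|_{E_j}$ is injective. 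Next, for any norm $s$ on $\R^m$ one has the elementary identity $\Haus^m_s=\mathfrak{J}_s\,\Leb^m$ with $\mathfrak{J}_s:=m\,\Leb^m(B_1(0))\bigl(\int_{\mathbb{S}^{m-1}}s(u)^{-m}\,d\Haus^{m-1}(u)\bigr)^{-1}$: indeed $\Haus^m_s$ is a translation-invariant locally finite Borel measure, hence a constant multiple of $\Leb^m$, and its value on the $s$-unit ball $B_s$ is exactly $\Leb^m(B_1(0))$, the upper bound coming from the one-set cover $\{B_s\}$ (of $s$-diameter $2$) and the lower bound from the isodiametric inequality in $(\R^m,s)$, which follows from Brunn--Minkowski since $\mathrm{diam}_sE\le d$ forces $E-E\subseteq d\,B_s$ and hence $2^m\Leb^m(E)\le\Leb^m(E-E)\le d^m\Leb^m(B_s)$; finally $\Leb^m(B_s)=\tfrac1m\int_{\mathbb{S}^{m-1}}s(u)^{-m}\,d\Haus^{m-1}(u)$ by polar coordinates.

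It remains to assemble. Since $f|_{E_j}$ is $(1\pm\eta_\varepsilon)$-bi-Lipschitz from $(E_j,s_j)$ onto $(f(E_j),\lVert\cdot\rVert)$, and a bi-Lipschitz map distorts $\Haus^m$ by at most its bi-Lipschitz constant to the power $m$, we obtain $\Haus^m_{\lVert\cdot\rVert}(f(E_j))=(1+o_\varepsilon(1))\,\mathfrak{J}_{s_j}\Leb^m(E_j)$, and by continuity of $s\mapsto\mathfrak{J}_s$ together with $|MD(f,x)-s_j|\le\varepsilon$ on $E_j$ also $\mathfrak{J}_{s_j}=(1+o_\varepsilon(1))\,\mathfrak{J}f(x)$ there; summing over $j$ gives $\sum_j\Haus^m_{\lVert\cdot\rVert}(f(E_j))=(1+o_\varepsilon(1))\int_A\mathfrak{J}f\,d\Leb^m$. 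On the other hand the $E_j$ partition $A$ up to a $\Leb^m$-null set with $\Haus^m_{\lVert\cdot\rVert}$-null image, and $f$ is injective on each, so the number of $j$ with $x\in f(E_j)$ equals $\#(A\cap f^{-1}(x))=N(f|_A,x)$; hence $\sum_j\Haus^m_{\lVert\cdot\rVert}(f(E_j))=\int_X N(f|_A,x)\,d\Haus^m_{\lVert\cdot\rVert}(x)$, a quantity independent of $\varepsilon$, and letting $\varepsilon\to0$ yields the theorem. I expect the decomposition to be the main obstacle: promoting the purely pointwise estimate \eqref{eq:1031} to an honest bi-Lipschitz estimate that holds for all pairs of points on sets of positive measure, with bi-Lipschitz constants tending to $1$ uniformly, while keeping the approximating norms nondegenerate with controlled equivalence constants — this is where Egorov's theorem, the separability of the cone of seminorms, and a careful exhaustion of the non-degenerate locus have to be used together. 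By comparison the identity $\Haus^m_s=\mathfrak{J}_s\Leb^m$ through the isodiametric inequality, and the passage $\varepsilon\to0$ (with $\mathfrak{J}f$ dominated by a constant times $(\Lip f)^m$), are routine.
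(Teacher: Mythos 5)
The paper does not prove this theorem: it is stated as a recalled result and attributed to Kirchheim, with the citation \cite{Kirchheimarea}. There is therefore no internal proof to compare against.

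That said, your argument is sound and essentially reproduces the original Kirchheim strategy. The three ingredients are exactly the ones he uses: disposing of the set where $MD(f,x)$ fails to exist or is degenerate (and checking that both sides of the identity vanish there), decomposing the remaining part of $A$ via the metric Rademacher theorem, Egorov, and a countable net of norms into pieces $E_j$ on which $f$ is $(1\pm\eta_\varepsilon)$-bi-Lipschitz relative to a fixed norm $s_j$ on $\R^m$, and the computation $\Haus^m_s=\mathfrak{J}_s\,\Leb^m$ for a fixed norm $s$ via translation invariance, the isodiametric inequality from Brunn--Minkowski, and polar coordinates. Two places in your write-up that deserve the care you already flag: in the decomposition step, the Rademacher estimate \eqref{eq:1031} is pointwise in the base point $x$, so turning it into an honest two-sided comparison $\lVert f(z)-f(y)\rVert\approx s_j(z-y)$ for \emph{all} pairs $y,z\in E_j$ requires choosing the pieces $E_j$ small enough (intersecting with a fine grid after Egorov) and anchoring the estimate at a single reference point in each piece; and the $\varepsilon$-net of norms must be combined with a stratification by the ellipticity constant $\inf_{|u|=1}MD(f,x)[u]$, precisely so that the relative error $\eta_\varepsilon$ does tend to zero uniformly on each stratum. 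You mention both; with those details executed, the proof is correct, and it takes the same route as the cited source.
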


Theorem \ref{areaformula} implies the following corollary.

\begin{corollary}\label{cor:rk}
Let $K$ be a compact subset of $\R^m$ and let $f:K\to (X,\lVert\cdot\rVert)$ be a Lipschitz map. Let $\mathcal{N}$ be the subset of those $x\in K$ at which $MD(f,x)$ exists, is a seminorm and there is
$u\in\mathbb{S}^{m-1}$ such that $MD(f,x)[u]=0$.
Then $\Haus^m_{\lVert\cdot\rVert}(f(\mathcal{N}))=0$.
\end{corollary}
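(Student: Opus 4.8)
The plan is to read this off from Kirchheim's area formula (Theorem~\ref{areaformula}) applied with the Lebesgue measurable set $A=\mathcal N$: what one has to show is that the metric Jacobian $\mathfrak J f$ vanishes identically on $\mathcal N$. A preliminary remark: since $K$ is compact, $f(K)$ is separable, hence embeds isometrically into $\ell^\infty$, and the McShane extension applied coordinatewise produces a Lipschitz map $\R^m\to\ell^\infty$ extending $f$ that leaves unchanged both the metric differential along $K$ and the Hausdorff measure of subsets of $f(K)$; so there is no loss of generality in assuming that $f$ is defined on all of $\R^m$, which I will do, so that Theorem~\ref{areaformula} applies literally. (The set $\mathcal N$ is measurable, being cut out by pointwise conditions on the a.e.-defined, measurable metric differential.)

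The heart of the matter is the claim that $\mathfrak J f(x)=0$ for every $x\in\mathcal N$. Fix such an $x$ and put $p:=MD(f,x)$; by definition of $\mathcal N$ this is a seminorm on $\R^m$, it satisfies $p\le \mathrm{Lip}(f)\,|\cdot|$, and it vanishes at some $u_0\in\mathbb S^{m-1}$. For $u$ in a small patch of $\mathbb S^{m-1}$ around $u_0$ one has $p(u)=p(u)-p(u_0)\le \mathrm{Lip}(f)\,|u-u_0|$, hence $(p(u))^{-m}\ge \mathrm{Lip}(f)^{-m}\,|u-u_0|^{-m}$; parametrising the patch by a neighbourhood of the origin in $\R^{m-1}$, the function $v\mapsto |v|^{-m}$ is non-integrable there (its exponent $-m$ lies strictly below the critical threshold $-(m-1)$), so $\int_{\mathbb S^{m-1}}(p(u))^{-m}\,d\Haus^{m-1}(u)=+\infty$ and therefore $\mathfrak J f(x)=0$ by the defining formula for $\mathfrak J$ in Theorem~\ref{areaformula}. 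I expect this non-integrability estimate to be the only genuinely non-formal point; the rest is bookkeeping.

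Finally, applying Theorem~\ref{areaformula} with $A=\mathcal N$ yields
\[
\int_{X} N(f|_{\mathcal N},x)\,d\Haus^m_{\|\cdot\|}(x)=\int_{\mathcal N}\mathfrak J f(x)\,d\Leb^m(x)=0,
\]
and since $N(f|_{\mathcal N},x)\ge 1$ for every $x\in f(\mathcal N)$, this forces $\Haus^m_{\|\cdot\|}(f(\mathcal N))=0$, as claimed.
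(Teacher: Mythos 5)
Your proof is correct and follows essentially the same route as the paper's: extend $f$ to a Lipschitz map on all of $\R^m$ (the paper cites a Banach-valued Lipschitz extension theorem directly, whereas you embed into $\ell^\infty$ and apply McShane coordinatewise, but the two are equivalent for this purpose), then show $\mathfrak J f=0$ on $\mathcal N$ by observing that the seminorm $p=MD(f,x)$ satisfies $p(v)\le p(v-u_0)\le C\,|v-u_0|$ near the zero direction $u_0$, which forces $\int_{\mathbb S^{m-1}} p^{-m}\,d\Haus^{m-1}=\infty$, and finally invoke Kirchheim's area formula with $A=\mathcal N$. The only cosmetic difference is your passing remark on the non-integrability of $|v|^{-m}$ near the origin in $\R^{m-1}$, which the paper leaves implicit.
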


\begin{proof}
Thanks to Theorem 2 in \cite{linbanach}, we can find a Lipschitz map $g:\R^m\to (X,\lVert\cdot\rVert)$ such that $g\lvert_K=f$. Moreover,  Theorem \ref{metricrademacher} implies that at density points $x$ of $K$ where $MD(f,x)$ is well defined, we also have $$MD(g,x)=MD(f,x).$$
In order to prove the proposition, thanks to Theorem \ref{areaformula}, it is sufficient to show that $\mathfrak{J}g=0$ $\Leb^m$-almost everywhere on $\mathcal{N}$. To show this, fix $x\in\mathcal{N}$  and assume that $u\in\mathbb{S}^{m-1}$ is such that $MD(f,x)[u]=0$. Since $MD(f,x)$ is a seminorm, we deduce that:
$$MD(f,x)[v]\leq MD(f,x)[u]+MD(f,x)[v-u]=MD(f,x)[v-u]\leq C\lvert v-u\rvert,$$
where $C:=\max_{e\in\mathbb{S}^{m-1}}MD(f,x)[e]$. This implies that $\mathfrak{J}(x)=0$ since:
$$\int_{\mathbb{S}^{m-1}} (MD(f,x)[v])^{-m}d\Haus^{m-1}(v)\geq C^{-m}\int_{\mathbb{S}^{m-1}} \lvert v-u\rvert^{-m}d\Haus^{m-1}(v)=\infty.$$
This concludes the proof.
\end{proof}

\subsection{Derived sets and the Mean Value Theorem}

Given a Lipschitz curve $\gamma:[0,1]\to \R^n$ for every $t\in[0,1]$ we define the \emph{derived set} of $f$ at $t$ as
\begin{equation}
    \mathcal{D} \gamma(t):=\bigcap_{r>0}\overline{\Big\{\frac{\gamma(a)-\gamma(b)}{a-b}:a<b\text{ and }\lvert a-t\rvert+\lvert b-t\rvert\leq r\Big\}}.
    \label{def:derived}
\end{equation}
If $\gamma$ is differentiable at $t$ the derived set clearly coincides with the singleton $\{\gamma'(t)\}$.

It is easy to see that for every $t\in[0,1]$ the set $\mathcal{D}\gamma(t)$ is compact and $\text{diam}_{eu}(\mathcal{D}\gamma(t))\leq \text{Lip}(\gamma)$.

\begin{proposition}[{[Mean Value Theorem]}]\label{prop:mean}
Let $\gamma\colon [0,1] \to \R^n$ be a Lipschitz curve. Then, for any $s,t \in [0,1]$, we have
\[
\lvert \gamma(s) - \gamma(t) \rvert \le \left( \sup_{t\in [0,1]} \max_{v\in \mathcal{D}\gamma(t)} \lvert v \rvert \right) \lvert s-t \rvert.
\]
Moreover, if $\gamma(0)=\gamma(1)$ then for every $w\in \R^n$ there exists some $t\in [0,1]$ such that
\[
0 \in \mathcal{D}\bigl(\langle \gamma, w \rangle\bigr)(t),
\qquad\text{where $\langle \gamma, w \rangle(t) := \langle \gamma(t), w \rangle$.}\]
\end{proposition}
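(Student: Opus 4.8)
The plan is to treat the two assertions separately. For the first inequality, set $L:=\sup_{t\in[0,1]}\max_{v\in\mathcal D\gamma(t)}|v|$, which is finite because every difference quotient of $\gamma$ has Euclidean norm at most $\Lip(\gamma)$; in particular $L\le\Lip(\gamma)$, so the inequality is a sharpening of the plain Lipschitz bound. The quickest route is via Rademacher's theorem: being Lipschitz, $\gamma$ is differentiable at $\mathscr L^1$-a.e.\ $t$, and at such $t$ one has $\mathcal D\gamma(t)=\{\gamma'(t)\}$, whence $|\gamma'(t)|\le L$; since $\gamma$ is absolutely continuous, $\gamma(s)-\gamma(t)=\int_t^s\gamma'(\tau)\,\de\tau$ and therefore $|\gamma(s)-\gamma(t)|\le L|s-t|$.

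If one prefers to avoid Rademacher, the same estimate follows from a Cousin-type (finite-subcover) argument, which is the step I would expect to require the most care. Fix $\varepsilon>0$. For $\tau\in[0,1]$ and $r>0$ let $\overline{S_r(\tau)}$ denote the closure of $\{(\gamma(a)-\gamma(b))/(a-b):a<b,\ |a-\tau|+|b-\tau|\le r,\ a,b\in[0,1]\}$; these sets are nonempty, compact, and decreasing in $r$, with intersection $\mathcal D\gamma(\tau)$. Since $\max_{v\in\mathcal D\gamma(\tau)}|v|\le L$, the finite intersection property yields $r_\tau>0$ with $|v|<L+\varepsilon$ for all $v\in\overline{S_{r_\tau}(\tau)}$; in particular $|\gamma(a)-\gamma(b)|\le(L+\varepsilon)|a-b|$ whenever $a,b\in(\tau-r_\tau/2,\tau+r_\tau/2)\cap[0,1]$. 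Covering $[s,t]$ (say $s<t$) by the intervals $(\tau-r_\tau/2,\tau+r_\tau/2)$, extracting a finite subcover, and choosing a partition $s=x_0<\dots<x_N=t$ refining it so that each $[x_{i-1},x_i]$ lies in one covering interval, we sum along the partition and use the triangle inequality to get $|\gamma(s)-\gamma(t)|\le(L+\varepsilon)(t-s)$; letting $\varepsilon\to0$ gives the claim.

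For the second assertion, fix $w\in\R^n$ and set $g:=\langle\gamma,w\rangle\colon[0,1]\to\R$, a Lipschitz function with $g(0)=g(1)$. If $g$ is constant, all its difference quotients vanish and $0\in\mathcal D g(t)$ for every $t$, so assume $g$ nonconstant; then, since the common endpoint value of $g$ cannot simultaneously be its maximum and its minimum, $g$ attains one of its two extrema at an interior point $t^\ast\in(0,1)$. I claim $0\in\mathcal D g(t^\ast)$. For every small $r>0$ the parameter set $\Delta_r:=\{(a,b):a<b\}\cap\{|a-t^\ast|+|b-t^\ast|\le r\}\cap[0,1]^2$ is an intersection of convex sets, hence convex and connected, so its image under the continuous map $(a,b)\mapsto(g(a)-g(b))/(a-b)$ is an interval of $\R$, and so is its closure $\overline{S_r(t^\ast)}$. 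Evaluating at $a<b=t^\ast$ and at $a=t^\ast<b$ and using that $t^\ast$ is an extremum of $g$, one of these difference quotients is $\ge0$ and the other is $\le0$; hence the interval $\overline{S_r(t^\ast)}$ meets both $(-\infty,0]$ and $[0,+\infty)$, so $0\in\overline{S_r(t^\ast)}$. As this holds for all $r>0$, $0\in\bigcap_{r>0}\overline{S_r(t^\ast)}=\mathcal D g(t^\ast)$, completing the proof.

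The only genuinely delicate point is the passage from the localized bound to the global one in the Cousin-type proof of the first inequality — and this can be sidestepped entirely by the Rademacher argument. Everything in the second part is elementary once one records that the difference-quotient map has connected, hence interval, image on the convex parameter region $\Delta_r$, which is precisely what upgrades the two one-sided sign computations at an interior extremum into the inclusion $0\in\mathcal D g(t^\ast)$.
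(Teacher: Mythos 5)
Your first inequality is proved exactly as in the paper: Lipschitz implies absolute continuity, $\gamma(s)-\gamma(t)=\int_t^s\dot\gamma(\tau)\,d\tau$, and $|\dot\gamma(\tau)|\le \sup_t\max_{v\in\mathcal D\gamma(t)}|v|$ because $\dot\gamma(\tau)\in\mathcal D\gamma(\tau)$ at points of differentiability. The Cousin-type alternative is correct too, but it duplicates work rather than adding anything.

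For the second assertion your argument is correct but genuinely different from the paper's. The paper fixes an interior maximum $t$ of $f=\langle\gamma,w\rangle$, chooses levels $\ell_i=f(0)+(1-1/i)(f(t)-f(0))\nearrow f(t)$, picks $a_i<t<b_i$ with $f(a_i)=f(b_i)=\ell_i$ by the intermediate value theorem so that each difference quotient $(f(b_i)-f(a_i))/(b_i-a_i)$ vanishes, and then argues by cases: either $a_i,b_i\to t$ and $0\in\mathcal Df(t)$ by definition of the derived set, or one of the sequences stays away, in which case a level line is attained on a subinterval of positive length, $f$ is locally constant there, and $0$ is in the derived set at any point of that subinterval. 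You instead observe that the parameter region $\Delta_r=\{a<b\}\cap\{|a-t^\ast|+|b-t^\ast|\le r\}\cap[0,1]^2$ is convex, hence connected, so the continuous difference-quotient map sends it to an interval; the one-sided quotients $a<b=t^\ast$ and $a=t^\ast<b$ at an interior extremum have opposite signs, so $0$ lies in that interval for every $r>0$, giving $0\in\mathcal Dg(t^\ast)$ directly. Your approach buys a cleaner, purely topological proof that avoids both the explicit level-sequence construction and the case split on whether the sequences converge, and it produces the witness point $t^\ast$ explicitly as the extremum rather than possibly at a nearby locally-constant stretch. The paper's proof is more elementary in flavor but slightly more delicate to write up because of the dichotomy. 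Both are valid.

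One small writing point: when you say the one-sided quotients are $\ge0$ and $\le0$ at the interior extremum, you should make explicit that such pairs $(a,t^\ast)$ and $(t^\ast,b)$ do lie in $\Delta_r$ for every small $r>0$ precisely because $t^\ast\in(0,1)$; you implicitly use this but never state it. This is a cosmetic omission, not a gap.
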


\begin{proof}Since $\gamma$ is Lipschitz we have
$$\lvert \gamma(s)-\gamma(t)\rvert\leq \int_t^s \lvert \dot{\gamma}(\tau)\rvert d\tau\leq \sup_{t\in[0,1]}\max_{v\in\mathcal{D}\gamma(t)}\lvert v\rvert \lvert s-t\rvert.$$
Let us move to the second part of the proof. Assume $\gamma(0)=\gamma(1)$ and fix any $w\in \R^n$. Define the scalar function 
\[
f(t) := \langle \gamma(t), w\rangle.
\]
If $f$ is constant, there is nothing to prove. On the other hand, if is non-constant, continuous and $f(0)=f(1)$, it attains a maximum (or a minimum) at some interior point $t\in (0,1)$. Without loss of generality, suppose that $t\in (0,1)$ is a point where $f$ attains its maximum.
    For each $i\in \mathbb{N}$, define the level
    \[
    \ell_i = f(0) + \left(1-\frac{1}{i}\right)\bigl(f(t)-f(0)\bigr),
    \]
    which satisfies $\ell_i < f(t)$ but $\ell_i \to f(t)$ as $i\to\infty$. By continuity of $f$, for every $i$ there exist points $a_i,b_i\in (0,1)$ with $a_i < t < b_i$ such that
    $f(a_i) = f(b_i) = \ell_i$. For such pairs we clearly have 
    \[
    \frac{f(b_i)-f(a_i)}{b_i-a_i}  = 0.
    \]
    Now, either both sequences $\{a_i\}$ and $\{b_i\}$ converge to $t$ and thus by definition we have that $0\in \mathcal{D}f(t)$ or one of the sequences does not converge to $t$. In this case, the level $\ell_i$ is attained on an interval of nonzero length. This means that $f$ is locally constant on some open subinterval of $[0,1]$, and on such an interval every difference quotient is $0$. Consequently, for any point in that subinterval, $0\in \mathcal{D}f$. This concludes the proof.
\end{proof}

\section{An Euclidean unrectifiability result}
\label{sec:not}

This section is devoted to prove some results on the structure of tangency sets of smooth non-involutive distributions with $C^{1,1}$ and $C^2$ surfaces. 

\subsection{Statement of the results}

\begin{parag}[Tangency sets of a surface to a distribution]
\label{def:tan}
Let $k^\prime\leq k$ and suppose $V$ is a $k$-dimensional distribution of class $C^1$. 
Given a $k'$-dimensional manifold of 
class $C^1$ in $\R^n$ we say that $p\in S$
is a \emph{contact point} of $S$ to $V$ 
if and only if $\Tan(S,p)\subseteq V(x)$. 
The family of all such points is denoted by
$$\mathscr{C}(S,V):=\{q\in S:\Tan(S,q)\subseteq V(q)\}.$$
In the following we will say that a subset $E$ of $S$ is a \emph{tangency set} if $E\subseteq \mathscr{C}(S,V)$.
\end{parag}

The following theorem is an immediate consequence of Proposition \ref{th:unrectC2}. The rigidity given by the $C^2$ regularity yields the following surprisingly strong result. 

\begin{parag}[Theorem. ({Structure of tangency sets to $C^{2}$ surfaces})]\label{th:main1}
    Let $2\leq h\leq k'\leq k<n$ and suppose $V$ is an $h$-non-involutive $k$-dimensional distribution of class $C^1$ in $\R^n$. Let $S$ be a submanifold of class $C^{2}$ of $\R^n$ of dimension $k'$. Then $\mathscr{C}(S,V)$ is $(h-1)$-rectifiable.
\end{parag}

\begin{proof}
    For any $p\in \mathscr{C}(S,V)$ since $S$ is a submanifold and $V$ is of class $C^1$, there exists a neighbourhood $U$ of $p$ such that up to isometries, we can assume that $\mathrm{Tan}(S,p)=\mathrm{span}(\{e_1,\ldots,e_{k'}\})\cong \R^{k'}$ and that $S$ coincides in $U$ with a graph of a $C^{1,1}$ function $\varphi:\R^{k'}\to \R^{n-k'}$. This observation, together with Propositions \ref{th:unrectC2} concludes the proof. 
\end{proof}

If the regularity of the surface drops just to $C^{1,1}$, the result becomes the following natural one. Its proof is identical to that of Theorem \ref{th:main1} where instead of Proposition \ref{th:unrectC2} we employ Proposition \ref{prop:unrectC1,1}.

\begin{parag}[Theorem. ({Structure of tangency sets to $C^{1,1}$ surfaces})]\label{th:main2}
    Let $2\leq h\leq k'\leq k<n$ and suppose $V$ is an $h$-non-involutive $k$-dimensional distribution of class $C^1$ in $\R^n$ of dimension $k'$. Let $S$ be a submanifold of class $C^{1,1}$ of $\R^n$. Then $\mathscr{C}(S,V)$ is $h$-purely-unrectifiable.
\end{parag}

\subsection{Proofs}

\begin{parag}[Notation]\label{notationpureunrect} Let $2\leq h\leq k'\leq k< n$, $V$ be an $h$-non-involutive $k$-dimensional distribution of class $C^1$. Let $\Omega\subseteq \R^{k'}$ be an open set and suppose that $\varphi:\Omega\to \R^{n-k'}$ is a $C^{1,1}(\Omega,\R^{n-k'})$ map. Identified $\R^{k'}$ with $\mathrm{span}(e_1,\ldots,e_{k'})$ in $\R^n$, we denote by $\Phi:\Omega\subseteq \R^{k'}\to \R^{n}$ the map $\Phi(x)=(x,\varphi(x))$ and let 
$$E(\varphi,V):=\Phi^{-1}(\mathscr{C}(\mathrm{im}\Phi,V)).$$
Note that $\Phi$ is bi-Lipschitz on its image.
\end{parag}

\begin{parag}[Tangent of a Borel set]
    Let $E$ be a subset of $\R^n$. For every $z\in E$ we denote by $S(E,z)$ the set of those $v\in \mathbb{S}^{n-1}$ for which there exists a sequence of $z_i\in E$ such that $\lim_{i\to \infty}z_i=z$ and
    $$\lim_{i\to \infty}\frac{z_i-z}{\lvert z_i-z\rvert}=v.$$
    Finally, we let $\Tan(E,x):=\mathrm{span}(S(E,x))$.
\end{parag}

\begin{proposition}\label{th:unrectC2}
    Suppose $\varphi$ is of class $C^2$. Then $E(\varphi,V)$ is $(h-1)$-rectifiable.
\end{proposition}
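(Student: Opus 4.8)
The plan is to reduce the statement to a tangency/unrectifiability argument on the parameter domain $\Omega$, using the pullback $\Phi^{\#}\omega$ of the defining $1$-forms of $V$. First I would recall that, since $V$ is $h$-non-involutive and of class $C^1$, locally around any point we can write $V(z)=\bigcap_{\ell}\ker\omega_\ell(z)$ with $C^1$ $1$-forms, and for every $h$-dimensional sub-distribution $W$ of $V$ there is one of these (or a combination) with $\mathrm{d}\omega|_W\neq 0$. A point $y\in E(\varphi,V)$ means $\Phi$ maps $\Tan(\mathrm{im}\,\Phi,\Phi(y))=\mathrm{d}\Phi(y)(\R^{k'})$ into $V(\Phi(y))$, so $(\Phi^{\#}\omega)(y)=0$ at every such $y$; since $\varphi$ is $C^2$, $\Phi^{\#}\omega$ is a $C^1$ form and $E(\varphi,V)$ is contained in the zero set of a $C^1$ section. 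The key is that on $E(\varphi,V)$ we moreover get information on the differential: by Lemma \ref{prop:tangentialdifferential} (applied here with the smoother $C^2$ hypothesis so everything is pointwise), $\mathrm{d}_\Sigma(\Phi^{\#}\omega)=\Phi^{\#}(\mathrm{d}\omega)|_{\Tan(\Sigma,\cdot)}$, and since $\Phi^{\#}\omega$ vanishes on $E(\varphi,V)$, its tangential differential along any $C^1$ surface $\Sigma$ contained in $E(\varphi,V)$ must vanish $\Haus$-a.e. on $\Sigma$; hence $\Phi^{\#}(\mathrm{d}\omega)$ restricted to $\Tan(\Sigma,y)$ is zero there.

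Next I would run the following dichotomy/stratification. Suppose, for contradiction, that $E(\varphi,V)$ is not $(h-1)$-rectifiable. By a standard measure-theoretic fact (e.g. the structure theory of rectifiable/unrectifiable sets, or simply: a Borel set with $\sigma$-finite $\Haus^{h-1}$ that is not $(h-1)$-rectifiable still has the property that through a positive-measure portion of it one can pass a $C^1$ $h$-dimensional surface — more precisely, if $E$ is not $(h-1)$-rectifiable then $\Haus^{h-1}(E)$ "sees" more than an $(h-1)$-rectifiable set, and one shows $E$ cannot be covered by countably many $C^1$ graphs of dimension $h-1$), we would find a $C^1$ $h$-dimensional submanifold $\Sigma\subseteq\Omega$ and a Borel subset $F\subseteq\Sigma\cap E(\varphi,V)$ with $\Haus^{h}(F)>0$. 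Restricting to $F$, the tangent planes $\Tan(\Sigma,y)$ for $y\in F$ are $h$-dimensional, and $\mathrm{d}\Phi(y)(\Tan(\Sigma,y))$ is an $h$-dimensional subspace of $V(\Phi(y))$ — call this family of $h$-planes the sub-distribution $W$. By the first paragraph, $\Phi^{\#}(\mathrm{d}\omega)|_{\Tan(\Sigma,y)}=0$ for $\Haus^h$-a.e. $y\in F$, equivalently $(\mathrm{d}\omega)_{\Phi(y)}|_{W(\Phi(y))}=0$ along $F$. But $h$-non-involutivity, via Proposition \ref{prop:implicinv} and the characterization in the "An example" paragraph, says precisely that any $h$-dimensional sub-distribution of $V$ is non-involutive, i.e. for the relevant defining forms $\mathrm{d}\omega|_{W}\neq 0$ — contradiction. (One has to be a little careful that $W$ as built from tangents of $\Sigma$ extends to a genuine $C^1$ distribution, or argue pointwise using the algebraic criterion (iii) of Proposition \ref{prop:implicinv} together with the commutator expansions of \S\ref{commexpr}; this is the place where $C^2$ regularity of $\varphi$ is comfortably enough to make $\Phi^{\#}(\mathrm{d}\omega)$ continuous and the pullback identities hold everywhere rather than a.e.)

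Finally I would assemble these pieces: the contradiction shows there is no $C^1$ $h$-surface meeting $E(\varphi,V)$ in positive $\Haus^h$-measure, so $E(\varphi,V)$ is $h$-purely unrectifiable; but a priori we want the sharper conclusion that it is $(h-1)$-rectifiable. For that I would invoke the implicit-function/Whitney stratification structure of the zero set of the $C^1$ section $z\mapsto (\Phi^{\#}\omega)(z)$ together with the differential vanishing: on the subset of $E(\varphi,V)$ where the restricted differential $\mathrm{d}_\Sigma(\Phi^{\#}\omega)$ (now an honest $C^0$ — in fact better — object since $\varphi\in C^2$) has rank $\ge 1$ as a linear map on the tangent, the zero set is locally contained in a $(k'-1)$-dimensional $C^1$ manifold; iterating the rank stratification $k'-1,k'-2,\dots$ down, and using at each level that the vanishing of one more independent component of $\mathrm{d}\omega|_W$ is forced by $h$-non-involutivity until the tangent drops below dimension $h$, one concludes $E(\varphi,V)$ is covered by countably many $C^1$ manifolds of dimension $\le h-1$, hence $(h-1)$-rectifiable. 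The main obstacle I anticipate is exactly this last bookkeeping step: carefully organizing the stratification so that the algebraic non-involutivity constraint cuts the dimension by one at each stage, and checking that the relevant sub-distributions obtained from tangent planes of strata are $C^1$-regular (so that Proposition \ref{prop:implicinv} and the $h$-non-involutivity hypothesis apply), rather than merely measurable — this is where the $C^2$ hypothesis on $\varphi$ is genuinely used.
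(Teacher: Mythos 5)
Your algebraic mechanism---pull back a $1$-form $\omega$ with $\omega|_W=0$ and $\mathrm{d}\omega|_W\neq 0$ and derive a contradiction from the vanishing of $\Phi^\#\omega$ along the tangency set---is the right germ, but the measure-theoretic scaffolding around it contains a genuine gap. Non-$(h-1)$-rectifiability does \emph{not} produce an $h$-dimensional $C^1$ surface $\Sigma$ with $\Haus^h(\Sigma\cap E(\varphi,V))>0$: a set can fail to be $(h-1)$-rectifiable while being $\Haus^h$-null (for instance any purely $(h-1)$-unrectifiable set of positive finite $\Haus^{h-1}$-measure), in which case no such $\Sigma$ exists and your dichotomy never starts. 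The argument you actually run, even granting its premises, gives only $h$-pure unrectifiability, which is strictly weaker than $(h-1)$-rectifiability precisely because of such examples; and the rank-stratification you propose to bridge the difference is, as you yourself note, left as an acknowledged obstacle---it would require at each stage fresh input forcing the rank of $\mathrm{d}_\Sigma(\Phi^\#\omega)$ up by one, and nothing in the setup supplies it.

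The paper's proof avoids all of this by bounding the dimension of the linear tangent cone at \emph{every} point and invoking a single covering lemma. Fix $z\in E(\varphi,V)$, let $T:=\Tan(E(\varphi,V),z)$ denote the span of limits of normalized secants, and pick $z_i\to z$ in $E(\varphi,V)$ realizing $T$; set $B:=\{\Phi(z_i):i\in\N\}\cup\{\Phi(z)\}$. Because $\varphi\in C^2$, the vector fields $\bar Y_j(p):=D\Phi(\Phi^{-1}(p))[v_j]$ along $\mathrm{im}\,\Phi$ (for $v_j$ an orthonormal basis of $T$) are $C^1$; projecting onto $V$ via $W_j:=\sum_\ell\langle\bar Y_j,Z_\ell\rangle Z_\ell$ gives $C^1$ vector fields tangent to $V$, equal to the $\bar Y_j$ on $B$, which near $\Phi(z)$ span a genuine $C^1$ sub-distribution $W$ of $V$ of dimension $t:=\dim T$. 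If $t\geq h$, the $h$-non-involutivity yields a $C^1$ form $\omega$ with $\omega|_W=0$, $\mathrm{d}\omega|_W\neq 0$; the $C^1$ map $z'\mapsto(\Phi^\#\omega)_{z'}|_T$ then vanishes at $z$ and at every $z_i$, and since the normalized secants $(z_i-z)/|z_i-z|$ span $T$, every directional derivative of that map along $T$ vanishes at $z$, whence $\mathrm{d}(\Phi^\#\omega)|_T(z)=0$, contradicting $\Phi^\#(\mathrm{d}\omega)|_T(z)\neq 0$. Thus $\dim\Tan(E(\varphi,V),z)\leq h-1$ everywhere, and \cite[Lemma~15.13]{Mattila1995GeometrySpaces} (a set whose linear tangent cone has dimension at most $m$ at every point is $m$-rectifiable) gives the conclusion directly. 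This tangent-cone criterion is the ingredient missing from your plan; once you have it, no dichotomy, no stratification, and no tangential-differential lemma are needed in the $C^2$ case.
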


\begin{proof}
    Throughout the proof of the proposition, let us fix $z\in E(\varphi,V)$ and let $\{z_i\}$ be a sequence in $E(\varphi,V)$ such that $T:=\mathrm{Tan}(E(\varphi,V),z)=\mathrm{Tan}(\{z_i:i\in\N\}\cup\{z\},z)$ and $\lim_{i\to\infty} z_i=z$ and let $v_j$ be an orthonormal basis of the $T$. 
Let us suppose that $t:=\mathrm{dim}(T)\geq h$.
For every $j=1,\ldots,t$ we denote by $\bar Y_j:\mathrm{im}\Phi\to \R^n$ the $C^1$ vector fields
$$\bar Y_j(p):=D\Phi(\Phi^{-1}(p))[v_j],\qquad \text{for any $p\in \mathrm{gr}_\Omega(\varphi)$.}$$
Denote $q_i:=\Phi(z_i)$ and $q:=\Phi(z)$ and let $B$ be the closed set $B:=\{q_i:i\in\N\}\cup \{q\}$.
It is immediate to see that there exists a neighbourhood $U$ of $q$ in $\R^n$ where $\bar Y_1,\ldots,\bar Y_t$ are independent in $U$.
Without loss of generality we can assume that there are $C^1$-vector fields $Z_1,\ldots,Z_k$ that span $V$ on $U$ and defined
$$W_j:=\sum_{\ell=1}^k\langle \bar Y_j,Z_\ell\rangle Z_\ell,\qquad \text{for any }j=1,\ldots,t,$$
we obtain a family of vector fields tangent to $V$ and such that $W_j(q)=\bar Y_j(q)$ for any $q\in B\cap U$. Without loss of generality, we can further assume that the vector fields $W_j$s are also independent on $U$, since they are independent at $q$. The vector fields $W_1,\ldots, W_t$ span a $t$-dimensional distribution $W$ of class $C^1$ tangent to $V$ in $U$. Therefore, since $V$ is supposed to be $h$-non-involutive, there exists a $1$-form $\omega$ of class $C^1$ such that $\omega\rvert_W=0$ but $\mathrm{d}\omega\rvert_W\neq 0$ in a neighbourhood of $q$, that we can suppose to coincide with $U$. It is easily seen that  $\Phi^\#\omega\vert_T=0$ and that $\mathrm{d}(\Phi^\#\omega\vert)\rvert_T\neq 0$ in a neighbourhood of $z$. However, since $\Phi^\#\omega\vert_T=0$ on $B$, we infer that $\mathrm D(\Phi^\#\omega)\rvert_T=0$ at $z$ and since $\mathrm d\Phi^\#\omega[X\wedge Y]=\mathrm D(\Phi^\#\omega)[X,Y]-\mathrm D(\Phi^\#\omega)[Y,X]$, we conclude that there must hold that $\mathrm d(\Phi^\#\omega)\rvert_T(z)=0$, yielding a contradiction. This implies in particular that $\mathrm{dim}(T)\leq h-1$. Thanks to \cite[Lemma 15.13]{Mattila1995GeometrySpaces}, we conclude that $E(\varphi,V)$ is $(h-1)$-rectifiable.  
\end{proof}

\begin{proposition}\label{prop:unrectC1,1}
If $\varphi$ is of class $C^{1,1}$, the set $E(\varphi,V)$ is $h$-purely unrectifiable. 
\end{proposition}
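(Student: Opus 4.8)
The plan is to argue by contradiction: suppose $E(\varphi,V)$ is not $h$-purely unrectifiable, so there is a compact $K\subseteq\R^h$ and a Lipschitz map $g\colon K\to\R^{k'}$ with $\Haus^h\bigl(g(K)\cap E(\varphi,V)\bigr)>0$. After restricting and using the area formula (Corollary \ref{cor:rk}), I would pass to a positive-measure subset of $K$ on which $g$ is metrically differentiable with injective (non-degenerate) metric differential, so that the image $\Sigma:=g(K')$ genuinely looks $h$-dimensional and carries a tangent plane $\Haus^h$-a.e.; composing with $\Phi$ we get an $h$-rectifiable piece of $\mathscr C(\mathrm{im}\,\Phi,V)$ inside $\R^n$, on which $\Tan$ exists a.e.\ and is contained in $V$. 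Concretely, I would work with $\Psi:=\Phi\circ g$, which is Lipschitz from $K'$ into $\R^n$, whose image lies in $\mathrm{im}\,\Phi$ and whose a.e.-defined tangent plane $\Tan(\Sigma,y)$ is an $h$-plane contained in $V(y)$. The goal is to contradict $h$-non-involutivity by producing, on a positive-measure set, an $h$-dimensional distribution tangent to $V$ which fails to be non-involutive.

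The mechanism is the same ``$\Phi^\#\omega$ vanishes to first order on a tangency set'' idea used in Proposition \ref{th:unrectC2}, but now with $C^{1,1}$ in place of $C^2$, so the second-order Taylor expansion of $\Exp$/of $\Phi$ is replaced by almost-everywhere statements about tangential differentials. First I would fix a point $y_0$ in the image that is simultaneously: a density point of $\Sigma$; a point where $\Tan(\Sigma,y_0)=:\pi$ is an $h$-plane inside $V(y_0)$; and, after choosing $C^1$ vector fields $W_1,\dots,W_h$ tangent to $V$ near $y_0$ with $W_j(y_0)$ spanning $\pi$ (hence spanning a $C^1$ $h$-distribution $W$ tangent to $V$), a point for which Lemma \ref{prop:tangentialdifferential} applies to each of the $n-h$ defining $1$-forms $\omega$ of $W$ (so $\omega|_W=0$ near $y_0$). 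Since $V$ is $h$-non-involutive, $W$ is non-involutive, so by Proposition \ref{prop:implicinv} some $\mathrm d\omega|_W(y_0)\neq0$. On the other hand, $\Psi^\#\omega$ vanishes identically on $K'$ (because $\Tan(\Sigma,\cdot)\subseteq W$ wherever the tangent exists, and, more robustly, because $\omega$ annihilates the direction of every secant of $\Sigma$ — here I would use the derived-set / mean-value machinery of Proposition \ref{prop:mean} applied to $t\mapsto\langle\Psi(t),\text{(a fixed vector)}\rangle$ along segments, to conclude the tangential differential $\mathrm d_{K'}(\Psi^\#\omega)$ is $\Leb^h$-a.e.\ zero). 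Then Lemma \ref{prop:tangentialdifferential} — or rather its pullback-composition version via Proposition \ref{id:distromega} — forces $\Psi^\#(\mathrm d\omega)|_{\Tan(K',y)}=0$ a.e., which translates to $\mathrm d\omega|_{\pi}(y_0)=0$, contradicting non-involutivity of $W$ at $y_0$.

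The main obstacle, and the reason this is much more delicate than the $C^2$ case, is the step ``$\Psi^\#\omega$ vanishes to first order a.e.'': when the surface is only $C^{1,1}$ (equivalently, $g$ is only Lipschitz and $\Sigma$ only $\Haus^h$-rectifiable with a.e.\ tangent), one cannot differentiate pointwise and must instead show that the tangential differential of the pulled-back form vanishes in the a.e./distributional sense coming from Lemma \ref{prop:tangentialdifferential}. The subtlety is that $\Sigma$ being tangent to $V$ only gives $\omega_y\bigl(\Tan(\Sigma,y)\bigr)=0$ at tangency points, not on a neighbourhood, so one has to be careful that ``$\Phi^\#\omega$ restricted to the graph is zero on a set'' genuinely implies its tangential derivative vanishes a.e.\ on that set — this is where the Lebesgue-density and derived-set arguments do the real work, and where one must chase the measure-theoretic hypotheses of Lemma \ref{prop:tangentialdifferential} carefully. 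A secondary technical point is the reduction, via Corollary \ref{cor:rk} and Kirchheim's area formula, to a genuinely $h$-dimensional non-degenerate piece, so that ``$\Tan$ is an $h$-plane a.e.'' is available; and then transferring the conclusion from the parameter domain $K'$ through $g$ and $\Phi$ back to a statement about the distribution $W$.
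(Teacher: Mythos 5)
Your proposal identifies the right endgame---pull back a well-chosen $1$-form by $\Phi$, exploit Lemma \ref{prop:tangentialdifferential} at a density point, and contradict non-involutivity---and this is indeed the skeleton of the paper's argument. But there is a genuine gap in the middle step, and it is precisely the place where the $C^{1,1}$ hypothesis does its work.

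The gap is the claim that ``$\Psi^\#\omega$ vanishes identically on $K'$, because $\Tan(\Sigma,\cdot)\subseteq W$ wherever the tangent exists, and more robustly because $\omega$ annihilates the direction of every secant of $\Sigma$.'' Neither of these holds. The $1$-form $\omega$ delivered by $h$-non-involutivity satisfies only $\omega|_W=0$ on a neighbourhood, where $W$ is the particular $h$-dimensional sub-distribution of $V$ you chose to span $\pi=\Tan(\Sigma,y_0)$ at the base point. At a nearby tangency point $y$, the tangent plane $\Tan(\Sigma,y)$ is still an $h$-plane inside $V(y)$, but it need not lie inside $W(y)$---both are $h$-subspaces of the $k$-plane $V(y)$, and they generally differ. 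So $\omega_y\bigl(\Tan(\Sigma,y)\bigr)$ has no reason to vanish, and a fortiori $\omega$ does not annihilate secants. (You cannot fall back on $\omega|_V=0$ either: the paper's own example after Definition \ref{def:hnoninv} shows that $h$-non-involutivity does \emph{not} guarantee a form with $\omega|_V=0$ and $d\omega|_W\neq 0$; you must work with $\omega|_W=0$ only.) The appeal to Proposition \ref{prop:mean} does not rescue this: the mean-value/derived-set device is used elsewhere in the paper (for the $\eta$-box metric), not here, and it does not produce the vanishing you need.

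What is missing is the Whitney-extension construction. The paper picks the density point $z$, a sequence $z_i\to z$ in $\Gamma\cap E(\varphi,V)$ realising $T=\Tan(\Gamma,z)$, and sets $q_i=\Phi(z_i)$, $q=\Phi(z)$, $B=\{q_i\}\cup\{q\}$. The Lipschitz vector fields $Y_j(p):=D\Phi(\Phi^{-1}(p))[v_j]$ are then extended to $C^1$ vector fields $\bar Y_j$ agreeing with $Y_j$ \emph{on all of $B$}: the Whitney jet condition at the accumulation point $q$ is exactly verifiable because $\varphi$ is $C^{1,1}$, hence $D\Phi$ is Lipschitz and differentiable along $T$ at $z$. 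Projecting onto $V$ then yields $C^1$ fields $W_j$ tangent to $V$ that \emph{still agree with $Y_j$ on $B$}. This is the step that forces $\omega_{q_i}\bigl(Y_j(q_i)\bigr)=0$, i.e.\ $(\Phi^\#\omega)_{z_i}|_T=0$, at a sequence with $z$ a density point---and only then does $\mathrm d_\Gamma(\Phi^\#\omega)(z)=0$ follow. Without this construction, you only know $\Phi^\#\omega|_T$ vanishes at the single point $z$, which gives no information about its tangential derivative. This Whitney step is the content that your ``I would use the derived-set machinery'' sentence needs to be replaced by, and it is the heart of the $C^{1,1}$ vs.\ $C^{2}$ distinction: $C^{1,1}$ is the minimal regularity making $D\Phi$ Lipschitz, hence Whitney-extendable.

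Two minor points. First, the reduction via Corollary \ref{cor:rk} and Kirchheim's area formula is not what the paper uses and is not needed here: the standard $C^1$-Lusin covering of Lipschitz images lets one reduce immediately to a $C^1$ surface $\Gamma$ in $\R^{k'}$ with $\Haus^h(\Gamma\cap E(\varphi,V))>0$. Second, your sentence ``$\Sigma$ being tangent to $V$ only gives $\omega_y(\Tan(\Sigma,y))=0$ at tangency points'' is already incorrect in the direction just discussed; the correct statement, and the correct worry, is that $\omega$ vanishes on $W$, not on $V$, and the whole difficulty is making $W$ line up with $\Tan(\Sigma,\cdot)$ along a density sequence.
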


\begin{proof}
Let $\Gamma$ be an $h$-dimensional $C^1$ surface in $\R^k$ and suppose by contradiction that $$\mathscr{H}^h\llcorner \Gamma(E(\varphi,V))>0.$$
Let us recall that for $\mathscr{H}^h\llcorner \Gamma$ almost every $z\in E(\varphi,V)$ we have
\begin{itemize}
    \item[(i)] $\mathrm{Tan}(\Gamma,z)\subseteq \mathrm{Tan}(E(\varphi,V),z)$;
    \item[(ii)] $D\Phi$ is differentiable along $\mathrm{Tan}(\Gamma,z)$ at $z$.
\item[(iii)]$z$ is a Lebesgue continuity point for $\mathrm{d}_\Gamma(\varphi^\#\omega)$ with respect to the measure $\mathscr{H}^h\llcorner \Gamma$.
\end{itemize}
Let us fix such a $z$ and let $T:=\mathrm{Tan}(\Gamma,z)$. 
Let $\{z_i\}$ be a sequence in $\Gamma\cap E(\varphi,V)$ such that $T=\mathrm{Tan}(\{z_i:i\in\N\}\cup\{z\},z)$ and $\lim_{i\to\infty} z_i=z$. Let $v_j$ be an orthonormal basis of $T$ notice that since $\Gamma$ is an $h$-Lipschitz graph, we have that $\mathrm{dim}(T)\leq h$. Finally, we assume that at $z$ the function $D\Phi$ is differentiable along $T$. 
For any $j=1,\ldots,h$ we denote by $Y_j:\mathrm{im}\Phi\to \R^n$ the Lipschitz vector fields
$$Y_j(p):=D\Phi(\Phi^{-1}(p))[v_j],\qquad \text{for any $p\in \mathrm{im}\Phi$.}$$
Denote $q_i:=\Phi(z_i)$ and $q:=\Phi(z)$ and let $B$ be the closed set $B:=\{q_i:i\in\N\}\cup \{q\}$, we now check that there are $C^1$ vector fields $\bar{Y}_j$ defined on $\R^n$ that coincide with $Y_j$ on $B$. Since the only accumulation point of $B$ is by construction $q$, we infer that the existence of the vector fields $\bar{Y}_j$ is ensured by checking that the hypothesis of Whitney's theorem, see \cite[3.1.14]{Federer1996GeometricTheory}, at $q$. However, this is immediately seen since we are assuming that $D\varphi$ is differentiable along $v_j$ at $z$ for any $j$. Since $\bar Y_j$ coincides with $Y_j$ on $B$, we infer that in a neighbourhood of $q$ they are linearly independent and 
$$\mathrm{span}(\{\bar Y_1(p),\ldots,\bar Y_h(p)\})=\mathrm{Tan}(\mathrm{im}\Phi,p)\subseteq V(p)\qquad\text{for any }p\in B.$$
It is immediate to see that there exists a neighbourhood $U$ of $q$ in $\R^n$ where $\bar Y_1,\ldots,\bar Y_h$ are independent in $U$. With the same argument employed in the proof of Theorem \ref{th:unrectC2}, we can construct vector fields $W_1,\ldots,W_h$ of class $C^1$ that are independent and tangent to $V$ on $U$. In addition, $W_j=Y_j$ on $B\cap U$.

Let $W$ be the $h$-dimensional distribution of class $C^1$ spanned by $W_1,\ldots,W_h$ and note that since $V$ is $h$-non-involutive, there exists a $1$-form of class $C^1$ such that such that $\omega\rvert_W=0$ and $\mathrm{d}\omega\rvert_W\neq 0$. 
It is immediate to see that under the above the hypothesis, we have $\Phi^\#\omega\rvert_T=0$ on $\Phi^{-1}(U)$. In addition, it is easy to check that $\Phi_\#(\mathrm{d}\omega)\lvert_T\neq 0$ on $\Phi^{-1}(U)$. However, thanks to our assumptions on $z$, we infer that Lemma \ref{prop:tangentialdifferential} can be applied at $z$ to yield  
$$\mathrm{d}_\Gamma(\Phi^\#\omega)(z)=\Phi^\#(\mathrm{d}\omega)(z)\vert_{T}\neq 0.$$
However, it is immediately seen that since $\Phi^\#\omega\rvert_T(z)=0$, thanks to its very definition we also have that $\mathrm{d}_\Gamma(\Phi^\#\omega)(z)=0$. This results in a contradiction and therefore $\mathscr{H}^h(\Gamma\cap E(\varphi,V))=0$, concluding the proof of the proposition.  
\end{proof}

\section{Pure unrectifiability of Carnot-Carath\'eodory spaces}

\subsection{Main result and strategy of the proof}

Suppose $V$ is a smooth distribution of $k$-planes with the H\"ormander condition. Let $1<h\leq k$ be the smallest positive integer for which $V$ is $h$-non-involutive. The following theorem is the main result of this section. 

\begin{theorem}\label{TH:UN}
Suppose $K$ is a compact subset of $\R^m$ for some $m\geq h$ and $f$ is a Lipschitz map from $K$, endowed with the Euclidean distance, to the \cc space $(\R^n,d_V)$. Then
 $$\Haus^{m}_{d_V}(f(K))=0.$$
\end{theorem}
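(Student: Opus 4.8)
The strategy is to reduce the intrinsic statement $\Haus^m_{d_V}(f(K))=0$ to the Euclidean unrectifiability result of Theorem \ref{th:main2}, via the Ball-Box theorem. For the sake of clarity I first describe the argument in the step-$2$ case and then indicate how to iterate. The first step is to show that $f$, viewed as a map into $(\R^n,d_V)$, can be covered by countably many Lipschitz pieces each of which has an \emph{osculating paraboloid}: that is, for $\Haus^m$-a.e.\ $x\in K$ the image $f$ near $x$ is squeezed, in the non-horizontal directions, at quadratic rate. Concretely, fix a patch $U_x$ and the exponential coordinates $\Exp_\zeta$ of \S\ref{par:BBT}; using Theorem \ref{BallBox} the inclusion $B_{d_V}(\zeta,\rho)\subseteq \Exp_\zeta(Q(\oldC{C:1}\rho))$ forces, for two nearby points $p,q$ in the image of $f$ with $d_V(p,q)\le r$, that the $\Exp_p$-coordinates of $q$ with degree-$2$ index are $O(r^2)$, while those of degree $1$ are $O(r)$. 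Since $f$ is Euclidean-Lipschitz into $d_V$, $d_V(f(x),f(x'))\le L|x-x'|$, and hence in the (smooth) coordinates given by $\Exp_{f(x)}$ the map $x'\mapsto f(x')$ has degree-$2$ components that are $o(|x-x'|)$-close to their second-order Taylor polynomial — i.e.\ $f$ has a second-order Taylor expansion in the sense needed to apply Whitney's extension theorem. This produces, on a set of positive measure, a $C^2$ (indeed just $C^{1,1}$ after the Whitney construction, which is what we need) $k$-dimensional surface $S$ through $f(x)$ with $\Tan(S,f(x))=V(f(x))$ and with $f$ mapping a positive-measure piece of $K$ into $S$.

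The second step invokes the Euclidean result. Since $h$ is the smallest integer for which $V$ is $h$-non-involutive, $V$ is in particular $h$-non-involutive, and the surfaces $S$ produced above are of class $C^{1,1}$ and of dimension $k\ge h$. By Theorem \ref{th:main2} (equivalently Proposition \ref{prop:unrectC1,1}) the contact set $\mathscr{C}(S,V)$ is $h$-purely unrectifiable, hence $\Haus^m_{\mathrm{eu}}(\mathscr{C}(S,V))=0$ for every $m\ge h$ because an $\Haus^m$-positive $h$-purely unrectifiable set cannot exist when $m\ge h$ (if $m>h$ it is $\Haus^m$-null trivially, and if $m=h$ pure unrectifiability and the definition of rectifiable applied to the identity-type Lipschitz parametrizations force nullity once we note that $f(K)$ itself would be an $h$-rectifiable subset of $S$). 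Covering $K$ by countably many such pieces — where on each the image lies in one surface $S_i$ and $f(K_i)\subseteq \mathscr{C}(S_i,V)$ since $\Tan(f(K_i),\cdot)$ is forced into $V$ wherever $MD(f,\cdot)$ is nondegenerate, while the degenerate part is handled by Corollary \ref{cor:rk} — we get $\Haus^m_{\mathrm{eu}}(f(K))=0$.

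The third and most delicate step is promoting $\Haus^m_{\mathrm{eu}}(f(K))=0$ to $\Haus^m_{d_V}(f(K))=0$. The two Hausdorff measures are \emph{not} comparable: small $d_V$-balls are much smaller in Euclidean diameter in the non-horizontal directions, so a Euclidean-efficient covering is a $d_V$-inefficient one. Here I would argue as follows. Fix $\varepsilon>0$. Cover $f(K)$ by Euclidean balls $U(p_i,\tau_i)$ with $\sum_i \tau_i^m<\varepsilon$ and all $\tau_i$ small. The point is that $f(K)$ is not an arbitrary null set: near each $p_i=f(x_i)$ it lies within an osculating paraboloid, so $f(K)\cap U(p_i,\tau_i)$ is contained in a Euclidean box of the form "horizontal side $\lesssim \tau_i$, degree-$2$ side $\lesssim \tau_i^{?}$" — but this is still too coarse. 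Instead, inside $U(p_i,\tau_i)$ re-cover $f(K)$ using the \emph{intrinsic} structure: the part of $f(K)$ in $U(p_i,\tau_i)$ has Euclidean diameter $\le 2\tau_i$ but, crucially, one wants its $d_V$-diameter to be $\lesssim \tau_i$ too, which fails in general. The resolution, following the "delicate cut of the coverings" alluded to in the introduction, is to subdivide each $U(p_i,\tau_i)$ along the non-horizontal directions into $\sim \tau_i^{-(\text{something})}$ slabs on which $d_V$ and $\sqrt{\cdot}$ of the Euclidean distance are comparable, then re-estimate; the book-keeping must be arranged so that the extra multiplicative factor incurred is absorbed because the original Euclidean covering had premeasure $<\varepsilon$ and $\varepsilon$ was arbitrary, combined with the fact that $f$ being Euclidean-Lipschitz into $d_V$ already gives $\Haus^m_{d_V}(f(K))\le L^m \Haus^m_{\mathrm{eu}}(K)<\infty$, so $\Haus^m_{d_V}\llcorner f(K)$ is a finite Radon measure that is absolutely continuous in an appropriate sense with respect to the covering estimates. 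I expect this covering surgery — matching the two metrics' incompatible scales while keeping the sum of the $m$-th powers under control — to be the main obstacle; everything before it is a combination of Ball-Box, Whitney extension, and the already-proved Theorem \ref{th:main2}. For the general step-$s$ case one replaces the osculating paraboloid by an osculating polynomial surface of degree $s-1$ (still $C^{1,1}$ after Whitney), the Ball-Box anisotropic box $Q(\rho)$ providing exactly the matching higher-order decay, and the same covering argument applies mutatis mutandis.
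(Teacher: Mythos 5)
Your first two steps are essentially the paper's: Ball--Box gives the anisotropic-cone / osculating-paraboloid property (Proposition \ref{prop:coni}), Whitney extension produces the local $C^{1,1}$ surfaces (Proposition \ref{whitney}), and Theorem \ref{th:main2} supplies Euclidean pure unrectifiability. Two points need tightening. First, to obtain a paraboloid estimate in terms of $|f(x)-f(y)|$ rather than $|x-y|$ you need a lower bound $|f(x)-f(y)|\ge\lambda|x-y|$; the paper first reduces to bi-Lipschitz pieces (Lemma \ref{lemma:AK}) and disposes of the degenerate set, where $\mathrm{rk}\,Df<m$, separately via \cite[Theorem~5.3]{HAL}. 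Second, pure $h$-unrectifiability of $\mathscr{C}(S,V)$ does not by itself imply $\Haus^m$-nullity for $m>h$: the paper fills this gap by observing that $f(K)$ is itself Euclidean $m$-rectifiable (a Lipschitz image of $K\subseteq\R^m$) and slicing by Fubini into $h$-rectifiable pieces, each of which is annihilated by pure unrectifiability.

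The genuine gap is your Step 3. You correctly single out the incompatibility of $d_V$-balls and Euclidean balls, but then discard your own first idea as ``too coarse'' and propose a slab subdivision whose book-keeping would not close: the multiplicative factor of order $\tau_i^{-\alpha}$ incurred by subdividing is not absorbed by shrinking $\varepsilon$, since it blows up as $\tau_i\to0$. In fact your first idea was the right one, and is precisely Lemma \ref{lem:palle}: because $f(K)\cap U(p_i,\tau_i)$ lies inside the anisotropic cone $\Exp_{p_i}(X(\Lambda))$ of Proposition \ref{prop:coni}, and because a Euclidean ball of radius $\tau_i$ intersected with that cone sits inside an anisotropic box $Q(\sim\tau_i)$, which by Ball--Box is contained in a $d_V$-ball of radius $\sim\tau_i$, one gets $U(p_i,\tau_i)\cap f(K)\subseteq B_{d_V}(p_i,C\tau_i)$. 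Thus from a Euclidean cover $\{B_l\}$ of $f(K)$ with $\sum(\mathrm{diam}\,B_l)^m<\varepsilon$ you pick centers $z_l\in f(K)\cap B_l$ and obtain a $d_V$-cover $\{B_{d_V}(z_l,C\,\mathrm{diam}\,B_l)\}$ with $\sum(\mathrm{diam}_{d_V})^m\lesssim\varepsilon$; no slab surgery is required. The single point your sketch elides, and which is essential, is that each covering ball must be recentered at a point of $f(K)$ so that the cone property of Proposition \ref{prop:coni} actually applies at that center.
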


Theorem \ref{TH:UN} implies that the metric space $(\R^n,d_V)$ is $(\Haus^m_{d_V},m)$-purely unrectifiable for any $m\geq h$, see  \cite[\S 3.2.14]{Federer1996GeometricTheory} for a definition. This fact is well known in the case of Carnot groups, see for instance \cite{AK00} and \cite{HAL}, but to our knowledge it was not still proved for general \cc spaces.   

The strategy of the proof of Theorem \ref{TH:UN} is to first show that one can reduce to the case in which $f$ is bi-Lipschitz, see Lemma \ref{lemma:AK}, and then prove that, for any point $x\in K$ at which $f$ is differentiable, we have 
% (Proposition \ref{prop:C2}):
\begin{equation}
    \text{im}(\mathrm{d}f(x))\subseteq V(f(x)).
    \label{eq:25}
\end{equation}
The above inclusion, thanks to \cite[Theorem 5.3]{HAL} directly implies that if $m>k$, the set $f(K)$ is $\Haus^m_V$-null.

In order to prove the claim in the case $h\leq m \leq k$, we first establish that $f(K)$ exhibits a quadratic tangency structure at almost every point, described precisely by the inclusion \eqref{incl:cono2}. This geometric regularity, combined with the inclusion \eqref{eq:25} and Proposition \ref{prop:unrectC1,1}, ensures that $f(K)$ is $\Haus^m$-null.

Now, consider a countable covering $\{U(x_l, \rho_l)\}_{l \in \N}$ of $f(K)$ with Euclidean balls centered at points in $f(K)$, satisfying $\sum_{l \in \N} \rho_l^m \leq \epsilon$. From this covering, we construct a refined covering by intersecting each $U(x_l, \rho_l)$ with the cone $\Exp_{x_l}(X(\lambda))$, as specified in inclusion \eqref{incl:cono}. This refined family of Borel sets remains a covering of $f(K)$, and each element's diameter, measured with respect to the distance $d_V$, is still comparable to $\rho_l$.
 
\subsection{Regularity of the images of Lipschitz functions}

Throughout this subsection we let $C$ be a compact subset of  $\R^m$ and we let $g:C\to (\R^n,d_V)$ be a fixed Lipschitz map. The following proposition is a special case of  \cite[Lemma 4.1]{metriccurrents}. It allows us to reduce to study the case in which $g$ is a bi-Lipschitz map.

\begin{lemma}\label{lemma:AK}
There are countably many compact sets $K_i\subseteq\R^m$  and bi-Lipschitz maps $f_i:K_i\to(\R^n,d_V)$ such that
$$\Haus^m_{d_V}\bigg(g(C)\setminus \bigcup_{i\in\N} f_i(K_i)\bigg)=0.$$
\end{lemma}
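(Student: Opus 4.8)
The statement is that any Lipschitz map $g\colon C\to(\R^n,d_V)$ from a compact subset of $\R^m$ can be decomposed, up to an $\Haus^m_{d_V}$-null set, into countably many bi-Lipschitz pieces. The plan is to invoke the general decomposition principle for Lipschitz maps into metric spaces that underlies \cite[Lemma 4.1]{metriccurrents} (this is the metric-space analogue of the classical fact that a Lipschitz map is, off a small set, piecewise bi-Lipschitz), and to check that its hypotheses hold here. The key structural input is Kirchheim's metric differential: by Theorem \ref{metricrademacher}, after extending $g$ to a Lipschitz map on $\R^m$ (using \cite{linbanach}, exactly as in the proof of Corollary \ref{cor:rk}), at $\Leb^m$-a.e.\ point $x\in C$ the metric differential $MD(g,x)[\cdot]$ exists and is a seminorm on $\R^m$.

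The first step is to split $C$ according to the rank of this seminorm. On the set $\mathcal{N}$ where $MD(g,x)$ is degenerate (i.e.\ kills some unit vector), Corollary \ref{cor:rk} gives $\Haus^m_{d_V}(g(\mathcal{N}))=0$, so this part is absorbed into the exceptional null set and can be discarded. On the complement, $MD(g,x)$ is a genuine norm on $\R^m$; the second step is to partition this set of ``good'' points into countably many Borel pieces on each of which the norm $MD(g,x)$ is, up to a fixed multiplicative factor $(1+\delta)$, comparable to one fixed reference norm — this is possible because the space of norms on $\R^m$ is separable in the appropriate (e.g.\ Banach–Mazur) sense and $x\mapsto MD(g,x)$ is Borel. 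The third step is to further subdivide each such piece into small balls so that the linearization estimate \eqref{eq:1031} of Theorem \ref{metricrademacher}, namely $\|g(z)-g(y)\|_{d_V}-MD(g,x)[z-y]=o(|z-x|+|y-x|)$, upgrades to a genuine two-sided bi-Lipschitz bound on the piece: on a small enough ball around a density point, $\|g(z)-g(y)\|_{d_V}$ is squeezed between constant multiples of $|z-y|$. Finally, intersecting with a countable dense family of such balls and taking the compact pieces (restricting, shrinking, and using that $g$ is defined on the compact $C$) produces the countably many compact $K_i$ and bi-Lipschitz $f_i=g|_{K_i}$, with the leftover — the union of $g(\mathcal{N})$ and the $\Leb^m$-null set where no good linearization point exists — being $\Haus^m_{d_V}$-null.

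The main obstacle, and really the only nontrivial point, is the passage from the infinitesimal estimate \eqref{eq:1031} to a uniform bi-Lipschitz bound on a piece of positive measure: the $o(\cdot)$ error in Rademacher's metric theorem is only pointwise, so one must use a Lusin/Egorov-type argument to find, for each $\delta$, a Borel set of almost full measure on which the convergence in \eqref{eq:1031} is uniform, and only then can one extract balls of definite radius on which the two-sided estimate holds. Since this is precisely the content of \cite[Lemma 4.1]{metriccurrents}, the cleanest route is to quote that result directly once we have verified that $g$ extends to a Lipschitz map on all of $\R^m$ and that the degenerate part is handled by Corollary \ref{cor:rk}; the rest is bookkeeping with countable coverings.
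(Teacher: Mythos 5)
Your proposal ultimately reduces to the same move the paper makes: Lemma \ref{lemma:AK} carries no proof in the paper and is attributed directly to \cite[Lemma 4.1]{metriccurrents}, which is exactly what you identify as the cleanest route, and your reconstruction of the underlying Kirchheim-differential argument (degenerate points handled by Corollary \ref{cor:rk}, Banach--Mazur partition of the nondegenerate metric differentials, Egorov to upgrade the pointwise $o(\cdot)$ in \eqref{eq:1031} to a uniform two-sided bound on compact pieces) is a sound account of why that lemma holds. The one detail worth making explicit is that Theorem \ref{metricrademacher} and Corollary \ref{cor:rk} are phrased for Banach-space targets, so before extending $g$ via \cite{linbanach} and invoking the metric Rademacher theorem one should isometrically embed the separable metric space $(\R^n,d_V)$ into $\ell^\infty$, exactly as the paper does in the proof of Proposition \ref{UIAM}.
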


The following proposition is a direct consequence of the shape of \cc balls, i.e. of Theorem \ref{BallBox}, and it will be used to prove that Lipschitz images of Euclidean spaces inside $(\R^n,d_V)$ must be very regular.

\begin{proposition}\label{prop:coni}
Suppose $K$ is a compact subset of $\R^m$,  
$f:K\to (\R^n,d_V)$ is an $L$-bi-Lipschitz map with constant $L>1$ for which there are
$\lambda,\overline{r}>0$ such that
$$\lambda\lvert z-y\rvert\leq \lvert f(z)-f(y)\rvert,\qquad\text{for every $z,y\in K$ provided $\lvert z-y\rvert\leq \overline{r}$.}$$
Then, for every $w\in K$ there are constants $\newr\label{r:8}=\oldr{r:8}(w,\overline{r})>0$ and $\Lambda=\Lambda(w,\lambda)>0$ such that
\begin{equation}
    f(K)\cap U(f(w),\oldr{r:8})\subseteq \Exp_{f(w)}\big(X(\Lambda)\big),
    \label{incl:cono}
\end{equation}
and where $X(\Lambda)$ is the \emph{anisotropic cone} of amplitude $\Lambda$, i.e.
$$X(\Lambda):=\{t\in U(0,\oldr{r:4}):\lvert t_l\rvert\leq (\Lambda \lvert t\rvert)^{\text{deg}X_l} \text{ for every }l=1,\ldots,n\}.$$
A useful consequence of \eqref{incl:cono} is the fact that 
\begin{equation}
\begin{split}
     &f(K)\cap U(f(w),\oldr{r:8})\\
     &\qquad\qquad\subseteq \Big\{x\in U(f(w),\oldr{r:8}) :\lvert \Pi_{V(f(w))^\perp}(x-f(w))\rvert\leq 2\Lambda^2\lvert x-f(w)\rvert^2\Big\},
    \label{incl:cono2}
\end{split}
\end{equation}
for every $w\in K$, 
where $\Pi_{V(f(w))^\perp}$ denotes the orthogonal projection onto $V(f(w))^\perp$.
\end{proposition}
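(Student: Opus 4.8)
The plan is to derive the cone inclusion \eqref{incl:cono} directly from the Ball-Box theorem and the bi-Lipschitz hypothesis, and then to obtain \eqref{incl:cono2} by feeding the cone description through the Taylor expansion \eqref{eq:10} of the exponential map. First I would fix $w\in K$ and work inside the patch $U_x$ containing $f(w)$, so that the map $\Exp_{f(w)}$ and the constants $\oldC{C:1}=\oldC{C:1}(x)$, $\oldr{r:7}=\oldr{r:7}(x)$, $\oldr{r:4}=\oldr{r:4}(x)$ of Theorem \ref{BallBox} and \S\ref{par:BBT} are available. Set $\zeta:=f(w)$. Given $z\in K$ with $|z-w|\le\overline r$ small, the Lipschitz bound gives $d_V(f(z),\zeta)=|f(z)-f(w)|_{d_V}\le L\,|z-w|$, so for $|z-w|$ small enough that $L|z-w|<\oldr{r:7}$ we may apply the right-hand inclusion of the Ball-Box theorem: $f(z)\in B_{d_V}(\zeta,L|z-w|)\subseteq\Exp_\zeta(Q(\oldC{C:1}L|z-w|))$. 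Writing $f(z)=\Exp_\zeta(t)$ with $t\in Q(\oldC{C:1}L|z-w|)$, this means $|t_l|\le(\oldC{C:1}L|z-w|)^{\deg X_l}$ for all $l$. On the other hand, the lower bi-Lipschitz bound $\lambda|z-w|\le|f(z)-f(w)|\le d_V(f(z),\zeta)$ combined with the left-hand Ball-Box inclusion (or, more directly, the quantitative invertibility of $\Exp_\zeta$ recorded around \eqref{eq:10}, whose differential at $0$ has operator norm bounded away from $0$ uniformly in $\zeta\in U_x$) yields a two-sided comparison $c\,|t|\le|z-w|\le C\,|t|$ for absolute constants depending only on $x$. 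Substituting $|z-w|\le C|t|$ into $|t_l|\le(\oldC{C:1}L|z-w|)^{\deg X_l}$ gives $|t_l|\le(\Lambda|t|)^{\deg X_l}$ with $\Lambda:=\oldC{C:1}LC$, i.e. $t\in X(\Lambda)$; shrinking the radius further ensures $t\in U(0,\oldr{r:4})$. Pushing forward by $\Exp_\zeta$ and taking the union over $z\in K$ with $|z-w|$ small produces the neighbourhood radius $\oldr{r:8}=\oldr{r:8}(w,\overline r)$ and establishes \eqref{incl:cono}.

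For \eqref{incl:cono2}, I would take any $x\in f(K)\cap U(\zeta,\oldr{r:8})$, write $x=\Exp_\zeta(t)$ with $t\in X(\Lambda)$, and invoke the second-order Taylor formula \eqref{eq:10}: $x-\zeta=\sum_{i=1}^n t_i X_i(\zeta)+\tfrac12\,\de^2\Exp_\zeta(st)[t,t]$. Now split the first-order term according to the horizontal/vertical decomposition: for $i=1,\dots,k$ the vectors $X_i(\zeta)$ lie in $V(\zeta)$, so their contribution vanishes under $\Pi_{V(\zeta)^\perp}$; for $i=k+1,\dots,n$ one has $\deg X_i\ge 2$, and the cone condition $|t_i|\le(\Lambda|t|)^{\deg X_i}\le(\Lambda|t|)^2$ (valid once $\Lambda|t|\le 1$, which we arrange by shrinking $\oldr{r:8}$) controls these components quadratically in $|t|$. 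The remainder term is $O(|t|^2)$ by smoothness of $\Exp_\zeta$ on the compact $\mathrm{cl}(U_x)$. Finally one needs to replace $|t|$ by $|x-\zeta|$ on the right-hand side: since the first-order term dominates and $\{X_i(\zeta)\}$ is a basis with uniformly bounded inverse Gram matrix on $U_x$, we get $|t|\le C'|x-\zeta|$ for small radii, and absorbing all constants into the bound yields $|\Pi_{V(\zeta)^\perp}(x-\zeta)|\le 2\Lambda^2|x-\zeta|^2$ after possibly enlarging $\Lambda$ and shrinking $\oldr{r:8}$ once more.

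The main obstacle I anticipate is bookkeeping the several successive shrinkings of the radius $\oldr{r:8}$ and keeping all the structural constants ($\oldC{C:1}$, $\oldr{r:4}$, $\oldr{r:7}$, the operator-norm bounds on $\de\Exp_\zeta$ and its second derivative, the Gram matrix of $\{X_i(\zeta)\}$) uniform in $\zeta$ ranging over the patch $U_x$ — this is exactly what \S\ref{par:BBT} and Theorem \ref{BallBox} were set up to provide, so it is a matter of careful invocation rather than a genuine difficulty. The one genuinely substantive point is the passage from the anisotropic \emph{box} estimate (which bounds each coordinate $t_l$ by a power of the side length $\oldC{C:1}L|z-w|$) to the anisotropic \emph{cone} estimate (which bounds $t_l$ by the same power of the intrinsic size $|t|$): this requires precisely the lower bi-Lipschitz inequality to compare $|z-w|$ with $|t|$, and is the reason the hypothesis $\lambda|z-w|\le|f(z)-f(y)|$ cannot be dropped. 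Everything else is a routine Taylor expansion.
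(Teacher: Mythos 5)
Your proposal is correct and takes essentially the same approach as the paper: bound the exponential coordinates $t$ of $f(z)$ via the upper Ball-Box inclusion and the Lipschitzianity of $f$, use the lower bi-Lipschitz hypothesis together with the Taylor expansion \eqref{eq:10} (equivalently, the uniform nondegeneracy of $\mathrm{d}\Exp_\zeta(0)$) to compare $|z-w|$ with $|t|$ and so pass from the anisotropic box to the anisotropic cone, and finally expand $\Exp_\zeta$ to second order and kill the horizontal first-order terms under $\Pi_{V(\zeta)^\perp}$ while controlling the vertical ones and the remainder quadratically. The only stylistic difference is that the paper routes the comparison through the Euclidean distance $|f(z)-f(w)|$ and processes dyadic annuli, whereas you compare $|z-w|$ to $|t|$ directly; you also make explicit the final substitution $|t|\lesssim|x-\zeta|$ that the paper leaves as ``few algebraic computations.''
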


\begin{proof}
Define $\newr\label{r:9}:=\min\{\oldr{r:4}/2\sqrt{n}\oldC{C:1}L,\oldr{nr1},\oldr{r:7}/L,\overline{r},1\}$, where $\oldr{r:4},\oldr{nr1},\oldr{r:7}$ and $\oldC{C:1}$ are the constants relative to the point $f(w)$ let $z\in K$ be such that $f(z)\in U_{f(w)}$. Thanks to Theorem \ref{BallBox} and the Lipschitzianity of $f$, for every $0<\rho<\oldr{r:9}$ we have that
\begin{equation}
    f(U(w,\rho)\cap K)\subseteq B_{d_V}(f(w),L\rho)\subseteq \Exp_{f(w)}\big(Q(\mathfrak{c}\rho)\big),
    \label{eq:30}
\end{equation}
where $\mathfrak{c}:=\oldC{C:1}L$. Thanks to the inclusion \eqref{eq:30}, for every $y\in U(w,\rho)\setminus U(w,\rho/2)\cap K$ there exists $\tau\in Q(\mathfrak{c}\rho)$ such that $f(y)=\Exp_{f(w)}(\tau)$. Therefore for every $l=1,\ldots,n$ we have
\begin{equation}
    \begin{split}
        \lvert \tau_l\rvert\leq& \big(\mathfrak{c}\rho\big)^{\text{deg}X_l}\leq \big(2\mathfrak{c}\lvert w-y\rvert\big)^{\text{deg}X_l}
        \leq \big(2\mathfrak{c}\lambda\lvert f(w)-f(y)\rvert\big)^{\text{deg}X_l},
        \label{eq:20}
    \end{split}
\end{equation}
where the second inequality above comes from the fact that $\rho\leq2\lvert z-y\rvert$ and the last one from the local Euclidean $\lambda$-Lipschitzianity of the inverse of $f$.
Moreover, thanks to our choice of $\oldr{r:9}$, we deduce that $\tau\in Q(\mathfrak{c}\oldr{r:9})\Subset U(0,\oldr{r:4})$ and thus identity \eqref{eq:10} implies that
\begin{equation}
    \begin{split}
    \lvert f(y)-f(w)\rvert=&\lvert\Exp_{f(y)}(\tau)-\Exp_{f(w)}(0)\rvert\\
&\leq \bigg(\sum_{l=1}^n \big\lvert X_l(f(w))\big\rvert+\sup_{t\in Q(\mathfrak{c}\oldr{r:9})}\lVert\mathrm{d}^2\Exp_{f(w)}(\tau)\rVert\bigg) \lvert \tau\rvert=:\mathfrak{m}\lvert \tau\rvert,
\label{eq:21}
\end{split}
\end{equation}
Summing up, inequalities \eqref{eq:20} and \eqref{eq:21} imply that $\lvert \tau_l\rvert\leq (2\lambda\mathfrak{c}\mathfrak{m}\lvert \tau\rvert)^{\text{deg}X_l}$ and thus
\begin{equation}
    f(U(z,\rho)\setminus U(z,\rho/2)\cap K)\subseteq \Exp_{f(z)}(X(2\lambda\mathfrak{c}\mathfrak{m})),
    \label{eq:22}
\end{equation}
for every $0<\rho<\oldr{r:9}$. The arbitrariness of $\rho$ in the inclusion \eqref{eq:22}, the $L$-bi-Lipschitzianity of $f$ and Theorem \ref{BallBox}, imply
\begin{equation}
    \begin{split}
f(K)\cap\Exp_{f(w)}\big(Q(\oldr{r:9}/\mathfrak{c})\big)\subseteq& f(K)\cap B_{d_V}(f(w),\oldr{r:9}/L)
        \\\subseteq& f(U(w,\oldr{r:9})\cap K)\subseteq \Exp_{f(w)}\big(X(2\lambda\mathfrak{c}\mathfrak{m})\big).
        \nonumber
    \end{split}
\end{equation}
Notice that in the second inclusion above we are using the fact that $f$ is supposed to be injective. 
Furthermore, since the box $Q(\oldr{r:9}/\mathfrak{c})$ contains the Euclidean ball $U(0,\newr\label{r:11})$, where $\oldr{r:11}:=(\oldr{r:9}/\mathfrak{c})^{N(f(w))}$, thanks to \eqref{exp:open} we finally conclude that:
$$f(K)\cap U\big(f(z),\delta(\oldr{r:11})\big)\subseteq \Exp_{f(z)}\big(X(2\lambda\mathfrak{c}\mathfrak{m})\big).$$
With the choice $\oldr{r:8}:=\delta(\oldr{r:11})$ and $\Lambda:=2\lambda\mathfrak{c}\mathfrak{m}$ the proof of the inclusion \eqref{incl:cono2} is complete. Note moreover that by construction, $\oldr{r:8}$ depends only on $w$ and $\overline{r}$ while
and $\Lambda$ depend only on $w$ and $\lambda$.

For any unitary vector $u\in V(f(w))^\perp$, by construction we have that $\langle u,X_l(f(w))\rangle=0$ for every $1\leq l\leq k$. Furthermore, since $\text{deg}X_l\geq 2$ whenever $l\geq k+1$, 
thanks to identities \eqref{eq:10}, \eqref{eq:20} and few algebraic computations which we omit, we deduce that:
$$\lvert\langle u, f(y)-f(w)\rangle\rvert\leq \bigg\lvert \sum_{l=k+1}^n\tau_l \langle u,X_l(f(w))\rangle\bigg\rvert+\mathfrak{m}\lvert \tau\rvert^2\leq 2\Lambda^2 \lvert f(w)-f(y)\rvert^2.$$
This proves the inclusion \eqref{incl:cono2} and thus proof of the lemma is complete.
\end{proof}

\begin{lemma}\label{lem:palle}
For every $\Lambda>0$ and every $x\in\R^n$ there are $\newC\label{C:2}=\oldC{C:2}(x,\Lambda)>0$ and $\newr\label{r:12}=\oldr{r:12}(x)>0$ such that for every $\zeta\in U_x$ and every $\rho<\oldr{r:12}(x)$, we have
$$U(\zeta,\rho)\cap \Exp_\zeta(X(\Lambda))\subseteq B_{d_V}(\zeta,\oldC{C:2}\rho).$$
\end{lemma}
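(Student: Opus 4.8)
The plan is to exploit the Ball-Box theorem (Theorem~\ref{BallBox}) together with the quantitative control on the anisotropic cone $X(\Lambda)$. The key observation is that a point in $\Exp_\zeta(X(\Lambda))\cap U(\zeta,\rho)$ has the form $\Exp_\zeta(t)$ for some $t\in U(0,\oldr{r:4})$ with $|t_l|\le (\Lambda|t|)^{\mathrm{deg}X_l}$. First I would use the expansion \eqref{eq:10} for $\Exp_\zeta$, namely $\Exp_\zeta(t)=\zeta+\sum_l t_lX_l(\zeta)+\tfrac12\mathrm{d}^2\Exp_\zeta(st)[t,t]$, to relate the Euclidean distance $|\Exp_\zeta(t)-\zeta|$ to $|t|$: since $\mathrm{d}\Exp_\zeta(0)$ is uniformly invertible on $U_x$ (as recorded in \S\ref{par:BBT}), for $\rho$ small enough one gets $|t|\le \newC\label{C:aux}|\Exp_\zeta(t)-\zeta|\le \oldC{C:aux}\rho$ for a constant depending only on $x$. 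This gives a bound $|t|\le\oldC{C:aux}\rho$, hence in particular $t$ lies in a small Euclidean ball, and one may shrink $\oldr{r:12}=\oldr{r:12}(x)$ so that $\oldC{C:aux}\rho$ is well below $\oldr{r:4}$ and below the constant $\oldr{r:7}$ from the Ball-Box theorem.

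Next I would show that the cone condition forces $t$ into an anisotropic box of the right scale. Since $|t_l|\le(\Lambda|t|)^{\mathrm{deg}X_l}\le(\Lambda\oldC{C:aux}\rho)^{\mathrm{deg}X_l}$ for every $l$, setting $\sigma:=\Lambda\oldC{C:aux}\rho$ we get $t\in Q(\sigma)$, the anisotropic box of side $\sigma$ in the sense of Theorem~\ref{BallBox}. Provided $\sigma<\oldr{r:7}$, which is arranged by the choice of $\oldr{r:12}(x)$ depending on $x$ (through $\oldr{r:7},\oldr{r:4},\oldC{C:1}$ and the invertibility constant) and on $\Lambda$, the inclusion in Theorem~\ref{BallBox} yields
\[
\Exp_\zeta(t)\in\Exp_\zeta\big(Q(\oldC{C:1}\cdot\oldC{C:1}^{-1}\sigma)\big)\subseteq\Exp_\zeta\big(Q(\oldC{C:1}\,(\oldC{C:1}\sigma))\big),
\]
so more directly $\Exp_\zeta(Q(\oldC{C:1}^{-1}r))\subseteq B_{d_V}(\zeta,r)\subseteq\Exp_\zeta(Q(\oldC{C:1}r))$ with $r:=\oldC{C:1}\sigma$ gives $\Exp_\zeta(t)\in B_{d_V}(\zeta,\oldC{C:1}\sigma)=B_{d_V}(\zeta,\oldC{C:1}\Lambda\oldC{C:aux}\rho)$. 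Taking $\oldC{C:2}:=\oldC{C:1}(x)\Lambda\,\oldC{C:aux}(x)$ and $\oldr{r:12}(x)$ small enough that all the intermediate smallness constraints hold for every $\zeta\in U_x$ (the relevant constants $\oldr{r:4},\oldr{r:7},\oldC{C:1}$ are uniform over $\zeta\in U_x$), this proves the claimed inclusion.

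The only mildly delicate point is the first step: converting the cone membership and the Euclidean ball constraint into the bound $|t|\lesssim\rho$ uniformly in $\zeta\in U_x$. This is where one uses that $\mathrm{Exp}_\zeta$ is smooth with second derivatives bounded on the compact patch $\mathrm{cl}(U_x)$ and that $\mathrm{d}\Exp_\zeta(0)$ has operator norm of its inverse bounded uniformly on $U_x$; then a routine application of the quantitative inverse function theorem (or simply \eqref{eq:10} together with absorbing the quadratic remainder for $|t|$ small) gives the Euclidean comparison $|t|\le\oldC{C:aux}|\Exp_\zeta(t)-\zeta|$. Once this is in hand the rest is a direct substitution into the Ball-Box inclusion, and I expect no further obstacle.
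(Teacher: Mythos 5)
Your argument is correct and follows essentially the same route as the paper: you invert the first-order expansion of $\Exp_\zeta$ (using the uniform lower bound on $\mathrm{d}\Exp_\zeta(0)$ and the uniform bound on the Hessian over $U_x$) to get $|t|\lesssim\rho$, feed this into the cone condition to place $t$ in an anisotropic box of side comparable to $\Lambda\rho$, and then invoke the Ball-Box inclusion. Your observation that the radius threshold must also be taken small in terms of $\Lambda$ (so that $\oldC{C:2}\rho$ stays below the Ball-Box threshold $\oldr{r:7}$) is a correct refinement; the paper's stated dependence $\oldr{r:12}=\oldr{r:12}(x)$ glosses over this, though it is harmless in the application since $\Lambda$ is fixed in terms of the base point there.
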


\begin{proof}
For every $\rho\leq\min\{\oldr{r:4},\oldr{nr1},\oldr{r:7}\}=:\newr\label{r:5}$ and every $\omega\in U(\zeta,\rho)\cap \Exp_\zeta(X(\Lambda))$ there exists $t\in X(\Lambda)$ such that $\omega=\Exp_\zeta(t)$. Therefore, identity \eqref{eq:10} implies that:
\begin{equation}
   \begin{split}
          \rho\geq&\lvert w-\zeta\rvert=\big\lvert \Exp_\zeta(t)-\Exp_\zeta(0)\big\rvert=\big\lvert \mathrm{d}\Exp_\zeta(0)[t]+\mathrm{d}^2 \Exp_\zeta (st)[t,t]\big\rvert\\
          \geq& \min_{\substack{v\in \mathbb{S}^{n-1}\\\zeta\in U_x}} \big\lvert \mathrm{d}\Exp_\zeta(0)[v]\big\rvert \: \lvert t\rvert-\max_{\substack{\tau\in U(0,\oldr{r:4})\\\zeta\in U_x}} \big\lVert \mathrm{d}^2\Exp_\zeta(\tau)\big\rVert \:\lvert t\rvert^2=:\mathfrak{n}_1\lvert t\rvert-\mathfrak{n}_2\lvert t\rvert^2.
          \label{eq:24}
   \end{split}
\end{equation}
Recall that since $\mathrm{Exp}$ is a diffeomorphism in $U_x$ we have that $\mathfrak{n}_1>0$. 
If $\rho\leq \min\{\mathfrak{n}_1^2/8(\mathfrak{n}_2+1), \oldr{r:5}\}=:\oldr{r:12}$, inequality \eqref{eq:24} and some algebraic computations that we omit, imply that $\lvert t\rvert\leq 2\rho/\mathfrak{n}_1$.

This means that
$$\lvert t_i\rvert\leq (\Lambda \lvert t\rvert)^{\mathrm{deg}(X_i)}\leq \Big(\frac{2\Lambda}{\mathfrak{n}_1}\rho\Big)^{\mathrm{deg}(X_i)}\qquad\text{for every }i=1,\ldots,n,$$
and hence $t\in Q(2\Lambda \rho/\mathrm{n}_1)$.

In particular, thanks to Theorem \ref{BallBox} and the choice $\oldC{C:2}:=2\Lambda/\mathfrak{n}_1\oldC{C:1}$, we have
$$U(\zeta,\rho)\cap\Exp_{\zeta}(X(\Lambda))\subseteq \Exp_\zeta(Q(X(\oldC{C:2}/\oldC{C:1}\rho)))\subseteq B_{d_V}(\zeta,\oldC{C:2}\rho),$$
for every $0<\rho<\oldr{r:12}$. Finally, we remark that the constants $\oldr{r:12}$ and $\oldC{C:2}$ depend only on $x$ and $\Lambda$ since $\mathfrak{n}_1,\mathfrak{n}_2,\oldr{r:5}$ and $\oldC{C:2}$ depend only on $x$.
\end{proof}

\begin{proposition}\label{whitney}
    Suppose $K$ is a compact subset of $\R^m$,  
$f:K\to (\R^n,d_V)$ is an $L$-bi-Lipschitz map with constant $L>1$ for which there are
$\lambda,\overline{r}>0$ such that
$$\lambda\lvert z-y\rvert\leq \lvert f(z)-f(y)\rvert\qquad\text{for every $z,y\in K$ provided $\lvert z-y\rvert\leq \overline{r}$.}$$
For every $z\in f(K)$ there exists an $r_z>0$ such that the following holds. There exists a map $\varphi:V(z)\to \R^n$ of class $C^{1,1}$ such that 
$$\varphi(\Pi_{V(z)}(f(K)\cap B(z,r_z)))=f(K)\cap B(z,r_z),$$
where $\Pi_{V(z)}$ denotes the orthogonal projection onto $V(z)$ and
$$\mathrm{im}(d\varphi(z))\subseteq V(\varphi(z)) \qquad\text{for every }z\in \Pi_{V(z)}(f(K)\cap B(z,r_z)).$$
\end{proposition}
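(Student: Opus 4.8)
The plan is to combine the cone inclusion \eqref{incl:cono2} from Proposition \ref{prop:coni} (which is available since $f$ is bi-Lipschitz with the local lower bound hypothesis) with a Whitney-type extension argument to produce the $C^{1,1}$ graph. Fix $z_0\in f(K)$. By \eqref{incl:cono2}, after translating $z_0$ to the origin and rotating so that $V(z_0)=\mathrm{span}(e_1,\dots,e_k)$, there are $r>0$ and $\Lambda>0$ with
\[
f(K)\cap U(0,r)\subseteq\bigl\{x:\ |\Pi_{V(z_0)^\perp}(x)|\leq 2\Lambda^2|x|^2\bigr\}.
\]
This quadratic pinching shows that, on a possibly smaller ball, the projection $\Pi_{V(z_0)}$ restricted to $f(K)\cap B(z_0,r_{z_0})$ is injective: if $x,x'$ lie in the set and $\Pi_{V(z_0)}(x)=\Pi_{V(z_0)}(x')$, then $|x-x'|=|\Pi_{V(z_0)^\perp}(x-x')|\leq |\Pi_{V(z_0)^\perp}x|+|\Pi_{V(z_0)^\perp}x'|\leq 2\Lambda^2(|x|^2+|x'|^2)$, and since $|x-x'|\geq$ the difference of the vertical components is controlled quadratically while the horizontal components agree, a standard argument forces $x=x'$ on a small enough ball (here one uses that $|x-x'|^2=|\Pi_{V(z_0)^\perp}(x-x')|^2$, so $|x-x'|\leq 2\Lambda^2\cdot 2\max(|x|,|x'|)\cdot|x-x'|$ would be needed; more carefully, write $x=(\xi,\eta)$, $x'=(\xi,\eta')$ and use $|\eta-\eta'|\le 2\Lambda^2(|\xi|^2+|\eta|^2+|\xi|^2+|\eta'|^2)$ together with the smallness of the vertical parts to close the estimate). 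Hence there is a well-defined map $\psi=\Pi_{V(z_0)^\perp}\circ(\Pi_{V(z_0)})^{-1}$ from $\Pi_{V(z_0)}(f(K)\cap B(z_0,r_{z_0}))$ into $V(z_0)^\perp$ whose graph is $f(K)\cap B(z_0,r_{z_0})$, and \eqref{incl:cono2} translates into the quadratic bound $|\psi(\xi)|\leq C|\xi|^2$ relative to $z_0$, and, applying the same reasoning at every nearby base point $z\in f(K)\cap B(z_0,r_{z_0})$, into the uniform estimate $|\psi(\xi)-\psi(\xi')-\nabla\psi(z)\cdot(\xi-\xi')|\leq C|\xi-\xi'|^2$ — i.e. $\psi$ is of class $C^{1,1}$ on the closed set $\Pi_{V(z_0)}(f(K)\cap B(z_0,r_{z_0}))$ in the sense of Whitney.

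Next I would invoke the Whitney extension theorem in the $C^{1,1}$ form (the jet $(\psi,\nabla\psi)$ satisfying the first-order Whitney conditions with a Lipschitz remainder; see \cite[3.1.14]{Federer1996GeometricTheory}) to obtain a genuine map $\varphi:V(z_0)\to\R^n$ of class $C^{1,1}$ with $\varphi|_{\Pi_{V(z_0)}(f(K)\cap B(z_0,r_{z_0}))}=\mathrm{id}\oplus\psi$, so that $\varphi\bigl(\Pi_{V(z_0)}(f(K)\cap B(z_0,r_{z_0}))\bigr)=f(K)\cap B(z_0,r_{z_0})$ after possibly shrinking $r_{z_0}$ so that $\varphi$ stays in the ball. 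The crucial point is the identification of $d\varphi$ at the graph points: at a point $z\in f(K)$ (i.e. at $\xi=\Pi_{V(z_0)}z$ in the domain), the tangent plane to the graph of $\varphi$ is the tangent plane $\Tan(f(K),z)$, so it suffices to show $\Tan(f(K),z)\subseteq V(z)$. This is exactly the content of the linearized cone inclusion: differentiating \eqref{incl:cono2} at $z$, every vector $v\in S(f(K),z)$ is a limit of $(x_i-z)/|x_i-z|$ with $|\Pi_{V(z)^\perp}(x_i-z)|\leq 2\Lambda^2|x_i-z|^2$, hence $\Pi_{V(z)^\perp}v=0$, i.e. $v\in V(z)$; taking spans gives $\Tan(f(K),z)=\mathrm{im}(d\varphi(\xi))\subseteq V(z)=V(\varphi(\xi))$.

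The main obstacle I expect is verifying carefully that the Whitney $C^{1,1}$ hypotheses genuinely hold on the closed set $\Pi_{V(z_0)}(f(K)\cap B(z_0,r_{z_0}))$ with uniform constants — i.e. upgrading the pointwise quadratic pinching \eqref{incl:cono2} (which a priori has a base-point-dependent radius $\oldr{r:8}(w)$ and amplitude $\Lambda(w)$) to a uniform second-order Taylor estimate for $\psi$ valid between \emph{any} two points of the set, not just between a point and $z_0$. This requires a compactness argument to make $\oldr{r:8}$ and $\Lambda$ locally uniform in $w$ (the constants in Proposition \ref{prop:coni} depend only on $w$, $\overline r$, $\lambda$, and the geometric data, which vary continuously/are bounded on compacta), together with the elementary but slightly delicate linear algebra needed to pass from "vertical displacement is $O(|x-z|^2)$ measured in $V(z)^\perp$" (with $z$ ranging over the set) to "$\psi\in C^{1,1}$ with controlled Lipschitz constant for $\nabla\psi$". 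Once that uniformity is in place, the extension and the tangent-plane computation are routine.
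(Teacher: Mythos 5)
Your overall plan coincides with the paper's: use the cone inclusion \eqref{incl:cono2} (valid because $f$ is bi-Lipschitz with the local lower bound), deduce that $f(K)\cap B$ is a Lipschitz graph over $V(z_0)$, verify first-order Whitney $C^{1,1}$ conditions for that graph function, and apply Whitney's extension theorem. You also correctly flag the uniformity of $\Lambda$ and $\oldr{r:8}$ over base points as the technical point that needs a shrinking/compactness argument.

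However, there is a genuine gap in your identification of $d\varphi$. You write ``$\nabla\psi(z)$'' as though it were determined by $\psi$, but $\psi$ is only defined on a closed, possibly nowhere-dense set $F$, so a candidate first-order jet has to be \emph{nominated}, not differentiated. You then argue that $\mathrm{im}(d\varphi(\xi))$ equals the tangent plane to the graph of $\varphi$ at $z=\varphi(\xi)$, which in turn equals $\Tan(f(K),z)$, and that the latter is contained in $V(z)$ by linearising \eqref{incl:cono2}. The middle step is false in general: the graph of $\varphi$ is a $k$-dimensional $C^{1,1}$ surface whose tangent is the full $k$-plane $\mathrm{graph}(D\varphi(\xi))$, whereas $\Tan(f(K),z)$ can have dimension strictly less than $k$ (for instance if $f(K)$ is locally a curve, $m<k$). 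In that case the graph's tangent $k$-plane contains $\Tan(f(K),z)$ but may well poke outside $V(z)$, and your argument proves nothing about $\mathrm{im}(d\varphi)$. The fix is exactly what the paper does: nominate the jet at $w\in F$ to be the linear map $M_w:V(z_0)\to V(z_0)^\perp$ with $\mathrm{graph}(M_w)=V(\varphi(w))$ (which exists and is small in norm by the continuity of $V$ and the $1/10$ Grassmannian closeness \eqref{vicipiani}). With this choice, (a) the Whitney first-order remainder estimate $|\psi(y)-\psi(x)-M_x(y-x)|\le C|y-x|^2$ is precisely what the cone inclusion at the moving base point $x$ gives, via the right-triangle computation in the paper's proof; (b) the Lipschitz condition $\|M_x-M_y\|\le C|x-y|$ follows from the smoothness of $V$; and (c) $\mathrm{im}(d\varphi(w))\subseteq V(\varphi(w))$ becomes automatic, since Whitney's theorem gives $D\varphi(w)=M_w$. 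Your tangent-cone step is then unnecessary; the quadratic pinching already produces the correct jet, and the derivative identity is built into the extension rather than recovered afterwards. (As a smaller point, your injectivity sketch also applies \eqref{incl:cono2} based at $z_0$ to two general points $x,x'$ of $f(K)$; the correct argument requires the inclusion based at the moving point $x$, combined with the angle bound between $V(x)$ and $V(z_0)$, to close the estimate $c|x-x'|\le 2\Lambda^2|x-x'|^2$.)
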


\begin{proof}
    Since $V(z)$ is continuous, there exists a neighbourhood $U$ of $z$ such that for every $w\in U$ we have that $V(w)$ is a graph of a linear function $M_w:V(z)\to V(z)^\perp$ over $V(z)$ and we can also assume without loss of generality 
    \begin{equation}
        \mathrm{dist}(V(w_1)\cap \mathbb{S}^{n-1},V(w_2)\cap \mathbb{S}^{n-1})\leq \frac{1}{10}\qquad\text{for every }w_1,w_2\in U.
        \label{vicipiani}
    \end{equation}
    Thanks to Proposition \ref{prop:coni}, there exists a possibly smaller neighbourhood $U'$ of $z$ such that 
    \begin{equation}
        \lvert \Pi_{V(f(w))^\perp}(f(a)-f(w))\rvert\leq 2\Lambda^2\lvert f(a)-f(w)\rvert^2
        \leq 2\Lambda^2L^2\lvert a-w\rvert^2,
        \label{stimeflungoV}
    \end{equation}
    whenever $a\in f^{-1}(U')$, where $\Lambda$ is the constant yielded by Proposition \ref{prop:coni} and where the last inequality comes from the fact that by construction the Euclidean distance is smaller than $d_V$. Proposition \ref{prop:coni} also implies that if $U'$ is chosen sufficiently small, 
    $\overline{U'}\cap f(K)$ is the graph of a Lipschitz function $\phi:F\to V(z)^\perp$, where $F:=\Pi_{V(z)}(\overline{U'}\cap f(K))$.

    Let us construct the map $\varphi$. As a first step, let us check that for every $x,y\in F$ we have 
    $$\phi(y)=\phi(x)+M_x(y-x)+R(y,x),$$
    with $\lvert R(y,x)\rvert\leq C\lvert y-x\rvert^2$ for some constant $C>0$. 
   So, let $x,y\in F$ and assume up to translations that $x=0$. Note that 
   \begin{equation}
       \begin{split}
           &\qquad\lvert\phi(y)-M_0(y)\rvert^2=\lvert(y+\phi(y))-y-M_0(y)\rvert^2\\
           =&\lvert \Pi_{V(0)^\perp}(y+\phi(y))\rvert^2+\lvert \Pi_{V(0)}((y+\phi(y))-y-M_0(y))\rvert^2\\
        \overset{\eqref{stimeflungoV}}{\leq}& 4\Lambda^4(1+\mathrm{Lip}(\phi))^2\lvert y\rvert^4+ \lvert \Pi_{V(0)}((y+\phi(y))-y-M_0(y))\rvert^2,
           \nonumber
       \end{split}
   \end{equation}
In addition, since by construction $y+M_0(y)\in V(0)$, we conclude that we are left to estimate $\lvert \Pi_{V(0)}(y+\phi(y))-y-M_0(y)\rvert$. Consider the triangle $T$ with vertices $y+M_0(y)$, $y+\phi(y)$ and $\Pi_{V(0)}(y+\phi(y))$. Let us call $\alpha$ the angle insisting at $y+\phi(y)$. Then, since the $T$ is right angled at $\Pi_{V(0)}(y+\phi(y))$, one can prove with some omitted algebraic computations that if $\Lambda \lvert y\rvert\leq 1$ and since $\tan(\alpha)\leq 1/10$, then
\begin{equation}
    \lvert \Pi_{V(0)}(y+\phi(y))-y-M_0(y)\rvert\leq 2\Lambda \lvert y\rvert^2\qquad\text{ whenever }\lvert y\rvert\leq \Lambda^{-1}.
\end{equation}
This finally implies that 
$$\lvert\phi(y)-M_0(y)\rvert^2\leq 4\Lambda^4(2+\mathrm{Lip}(\phi))^2\lvert y\rvert^4.$$
This shows in particular that the hypothesis of Whitney's extension theorem, see \cite[\S 2.3, Theorem 4]{zbMATH03441026}, are satisfied. Hence we can find a $C^{1,1}$ map $\varphi:V(z)\to V(z)^\perp$ such that
$$\varphi(w)=\phi(w)\qquad\text{and}\qquad D\varphi(w)=M_w\qquad\text{for every }w\in F.$$
This concludes the proof.
\end{proof}

\begin{proof}[Proof of Theorem \ref{TH:UN}]
 Thanks to Lemma \ref{lemma:AK} there are countably many $L_i$-bi-Lipschitz maps $f_i:K_i\Subset \R^m\to(\R^n,d)$ such that
$$\Haus^m_{d_V}\bigg(f(K)\setminus\bigcup_{i\in\N} f_i(K_i)\bigg)=0.$$
For any $\lambda,\overline{r}\in\Q^+$ and $i\in\N$, we let:
\begin{equation}
K_i(\lambda,\overline{r}):=\bigg\{z\in K_i:\lambda\leq \frac{\lvert f_i(y)-f_i(z)\rvert}{\lvert y-z\rvert}\text{ for any }y\in U(z,\overline{r})\cap K_i\bigg\}.
\label{eq:1027}
\end{equation}
Recall that the functions $f_i$ are locally Euclidean Lipschitz and thus their differential exists outside a null set $\mathcal{N}$. This implies that
\begin{equation}
K_i\setminus\bigcup_{\lambda,\overline{r}\in\Q^+} K_i(\lambda,\overline{r})\subseteq\{z\in K_i:\text{rk}(\mathrm{d}f_i(z))<m\}\cup \mathcal{N},
\label{eq:1023}
\end{equation}
and in particular
$$\Haus_V^m\bigg(f_i(K_i)\setminus \bigcup_{\lambda,\overline{r}\in\Q^+}f_i(K_i(\lambda,\overline{r}))\bigg)\leq \Haus_V^m\big(f_i(\{z\in K_i:\text{rk}(\mathrm{d}f(z))<m\})\big)=0,$$
where the last equality above is an immediate consequence of \cite[Theorem 5.3]{HAL}.

The above discussion shows that without loss of generality we can reduce to show that if $f$ is bi-Lipschitz and such that $K=K_i(\lambda,\overline{r})$ for some $\lambda,\overline{r}>0$, then $f(K)$ is $(\mathscr{H}^m_V,m)$-purely unrectifiable.  

Let us note that thanks to \eqref{incl:cono2}, it is immediate to see that 
$$\mathrm{im}(\mathrm{d}f(z))\subseteq V(f(z))\qquad\text{for every }z\in U(x,\overline{r})\cap K.$$
Thanks to Proposition \ref{whitney}, we know that there are countably many $C^{1,1}$ surfaces $S_i$ of dimension $k$ such that 
$$\{z\in U(x,\overline{r})\cap K:\mathrm{im}(\mathrm{d}f(z))\subseteq V(f(z))\}\subseteq \bigcup_{i\in\N} \mathscr{C}(S_i,V).$$
However, by Proposition \ref{prop:unrectC1,1} we know that $\mathscr{C}(S_i,V)$ is (Euclidean) $h$-purely unrectifiable for every $i\in \N$, i.e.
$$\Haus^h(\Gamma\cap f(K))=0,\qquad\text{for every $h$-rectifiable }\Gamma.$$

Now, assume for contradiction that there exists an \( m \)-dimensional Lipschitz surface \( S \) with
\[
\Haus^m(S \cap f(K)) > 0.
\]
Without loss of generality, we may suppose that \( S \) is compact and can be expressed as the graph of a Lipschitz function $\Psi: K \Subset W \to W^\perp$,
where \( W \) is an \( m \)-dimensional plane in \( \mathbb{R}^n \).

Since \( S \) has a positive \( m \)-dimensional measure intersecting \( f(K) \), we can apply a slicing argument. Let \( A \) be an \( (m-h) \)-dimensional subspace of \( W \). By Fubini’s theorem, the measure \( \Haus^m \) restricted to \( S \cap f(K) \) can be decomposed as an integral of \( h \)-dimensional measures:
\[
\Haus^m\llcorner (S \cap f(K)) = \int \Haus^h\llcorner (S_z \cap f(K)) \, d\Haus^{m-h}\llcorner A(z),
\]
where the slices \( S_z \) are defined by
\[
S_z := \Psi\bigl(\pi_W(S) \cap (z+B)\bigr),
\]
with \( B \) being the orthogonal complement of \( A \) in \( W \) and \( \pi_W \) the orthogonal projection onto \( W \).
This decomposition implies that there must exist a set of \( z \in A \) of positive \( \Haus^{m-h} \)-measure for which
\[
\Haus^h(S_z \cap f(K)) > 0.
\]
However, each slice \( S_z \) is an \( h \)-rectifiable set, and by the earlier conclusion its intersection with \( f(K) \) must have zero \( \Haus^h \)-measure. This contradiction shows that no such \( m \)-dimensional Lipschitz surface \( S \) can exist.

The final step is to prove that we can improve the above identity to 
$$\Haus^m_V(\Gamma\cap f(K))=0,\qquad\text{for every $m$-rectifiable }\Gamma.$$
Fix a compact $m$-rectifiable set $\Gamma$.
By compactness there are finitely many patches $U_{x_p}$, see \S\ref{par:BBT}, centred at points $x_p\in f(K)\cap \Gamma$ such that
$f(K)\cap  \Gamma\subseteq \bigcup_{p=1}^M U_{x_p}$. 
Since for every $p\in\{1,\ldots,M\}$ the set $f(K)\cap \Gamma\cap  U_{x_p}$ is $\Haus^m$-null, for every $\delta>0$ and $\epsilon>0$ there is a countable Borel cover $\{B_l\}_{l\in\N}$ of $f(K)\cap U_{x_p}$ such that defined $\rho_l:=\text{diam}(B_l)\leq \min\{\oldr{r:12}(x_p)/4,\delta\}$ for every $l\in\N$ we have
$$\sum_{l\in\N}\rho_l^m\leq \epsilon.$$

Let $z_l$ be an element of $f(K)\cap \Gamma\cap U_{x_p}\cap B_l$ and note that $\{U(z_l,2\rho_l)\}_{l\in\N}$ also covers $f(K)\cap U_{x_p}$.
Furthermore, since $2\rho_l<\oldr{r:12}(x_p)$, Corollary \ref{prop:coni} and Lemma \ref{lem:palle} imply that for every $l\in\N$ we have
\begin{equation}
U(z_l,2\rho_l)\cap f(K)\subseteq U(z_l,2\rho_l)\cap\Exp_{z_l}\big(X(\Lambda(x_p))\big)\subseteq B_{d_V}(z_l,4 \oldC{C:2}(x_p)\rho_l).
\nonumber
\end{equation}
Therefore, $\{B_{d_V}(z_l,4 \oldC{C:2}\rho_l)\}_{l\in\N}$ is a countable cover for $U_{x_p}\cap f(K)$ and
$$\sum_{l\in\N}\text{diam}_V\big(B_{d_V}(z_l,4 \oldC{C:2}(x_p)\rho_l)\big)^m\leq 4^m\oldC{C:2}(x_p)^m\sum_{l\in\N}\rho_l^m\leq 4^m\oldC{C:2}(x_p)^m\epsilon.$$
Thanks to the arbitrariness of $\delta>0$, we deduce that
$$\Haus^m_V(f(K))\leq\sum_{i=1}^M\Haus^m_V(f(K)\cap U_{x_p}) \leq 4^m\max_{p=1,\ldots,M}\oldC{C:2}(x_p)^mM\epsilon.$$
Thanks to the arbitrariness of $\epsilon$, we conclude that $\Haus_V^m(f(K))=0$.
\end{proof}

%%%%%%%%%%%%%%%%%%%%%%%%%%%%%%%%%%%%%%%%%%%%%%%%%%%

\section{Extremality of \cc spaces}

\subsection{Squeezed metrics}

Suppose $V$ and $W$ are distributions of $k$ and $(n-k)$-planes in $\R^n$, spanned by the systems vector fields $\mathcal{X}:=\{X_1,\ldots,X_k\}$ and $\mathcal{Y}:=\{X_{k+1},\ldots,X_n\}$ respectively, and assume that for every $x\in\R^n$ we have
$$\R^n=V(x)\oplus W(x).$$
Throughout this section, for every $x\in\R^n$, we denote with $\Pi_x^V$ the projection on $V(x)$ along the vector subspace $W(x)$ and with $\Pi_x^W$ the map $\text{id}-\Pi_x^V$, which is the projection on $W(x)$ along $V(x)$.

\begin{parag}[Squeezed metrics on $V$]
The idea behind the definitions and the computations of this section is that the rectifiability properties of \cc spaces is completely determined by the Ball-Box theorem. With this in mind, we give the following definition.

\begin{definition}\label{def:squmet}
For every $\eta\in[1,2]$ we say that a metric $d$ on $\R^n$ is $\eta$-squeezed on $V$ if every compact set $K\subseteq \R^n$, there are constants $\newC\label{C:10}=\oldC{C:10}(d,K)>1$ and $\newr\label{r:30}=\oldr{r:30}(d,K)>0$ such that
\begin{equation}
    \oldC{C:10}^{-1}\big(\lvert\Pi_x^V[y-x]\rvert+\lvert\Pi_x^W[y-x]\rvert^{\frac{1}{\eta}}\big)\underset{(A)}{\leq}d(x,y)\underset{(B)}{\leq} \oldC{C:10}\big(\lvert\Pi_x^V[y-x]\rvert+\lvert\Pi_x^W[y-x]\rvert^{\frac{1}{\eta}}\big)
    \label{eq:50},
\end{equation}
for every $x\in K$ and every $y\in B_d(x,\oldr{r:30})$.
\end{definition}

An immediate consequence of the above definition, is that for every fixed $\eta$, up to local equivalence, there exists a unique $\eta$-squeezed metric on $V$.

\begin{proposition}\label{prop:equiv}
If $d_1$ and $d_2$ are two $\eta$-squeezed metrics on $V$, then they are locally equivalent, i.e., for any $x\in\R^n$ there are an open neighbourhood $U$ of $x$ and a constant $C>1$ such that:
$$C^{-1}d_1(y,z)\leq d_2(y,z)\leq C d_1(y,z),$$
whenever $y,z\in U$.
\end{proposition}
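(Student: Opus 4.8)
The plan is to show that, near any given point, each of $d_1,d_2$ is two-sidedly comparable to the \emph{same} metric-independent quantity, and then to chain the two comparisons. Fix $x_0\in\R^n$ and let $K$ be a closed Euclidean ball centred at $x_0$, so that $K$ is compact. Applying Definition~\ref{def:squmet} with this $K$ to each metric, we obtain, for $j=1,2$, constants $C_j>1$ and $r_j>0$ for which \eqref{eq:50} holds with $d=d_j$ (these are the constants $\oldC{C:10},\oldr{r:30}$ of Definition~\ref{def:squmet} relative to $K$), for every base point $x\in K$ and every $y\in B_{d_j}(x,r_j)$. The crucial observation is that the two outer expressions in \eqref{eq:50} carry no reference to the metric: setting
\[
Q(x,y):=\bigl|\Pi_x^V[y-x]\bigr|+\bigl|\Pi_x^W[y-x]\bigr|^{1/\eta},
\]
the estimate \eqref{eq:50} reads $C_j^{-1}Q(x,y)\le d_j(x,y)\le C_jQ(x,y)$, with one and the same $Q$ for $j=1,2$, since both metrics are squeezed with respect to the same distributions $V,W$.

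Assume for the moment that we have found a Euclidean neighbourhood $U$ of $x_0$ with $\overline{U}\subseteq\mathrm{int}(K)$ and $\mathrm{diam}_{d_j}(U)<r_j$ for $j=1,2$. Then, for any $y,z\in U$, we have $y\in K$ and $d_j(y,z)<r_j$, hence $z\in B_{d_j}(y,r_j)$, so \eqref{eq:50} applies with base point $y$ and gives $C_j^{-1}Q(y,z)\le d_j(y,z)\le C_jQ(y,z)$ for $j=1,2$. Eliminating $Q(y,z)$ between these inequalities yields
\[
(C_1C_2)^{-1}\,d_2(y,z)\ \le\ d_1(y,z)\ \le\ C_1C_2\,d_2(y,z)\qquad\text{for all }y,z\in U,
\]
which is the assertion, with $C:=C_1C_2$.

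Thus everything reduces to the construction of such a $U$, and this is the step I expect to be the main obstacle: it amounts to the statement that Euclidean-nearby points are $d_j$-close, equivalently that $d_j$ induces (locally) the Euclidean topology near $x_0$. One inclusion of topologies is immediate, for the lower bound in \eqref{eq:50} together with the locally uniform boundedness of the projections $\Pi_x^V,\Pi_x^W$ on $K$ forces every sufficiently small $d_j$-ball to have small Euclidean diameter. The reverse inclusion is the delicate one, since the upper bound in \eqref{eq:50} is available only \emph{after} one knows $d_j(y,z)\le r_j$. I would dispose of it by first proving that any $\eta$-squeezed metric on $V$ induces the Euclidean topology on $\R^n$ --- for the explicit $\eta$-squeezed metrics constructed later (Definition~\ref{dVmtr}) this is automatic, as those are length metrics comparable to the Euclidean distance --- and then taking $U$ to be a Euclidean ball about $x_0$ small enough that $\overline{U}\subseteq\mathrm{int}(K)$ and $\mathrm{diam}_{d_j}(U)<r_j$ for both $j$; this completes the proof.
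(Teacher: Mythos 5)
The paper gives no proof of this proposition, asserting it to be ``an immediate consequence of the above definition,'' so your chaining argument is in spirit the intended one: rewrite both sides of \eqref{eq:50} in terms of the metric-independent quantity $Q(x,y)=\lvert\Pi_x^V[y-x]\rvert+\lvert\Pi_x^W[y-x]\rvert^{1/\eta}$ and eliminate $Q$. That step is fine, and you are right to flag the remaining point as ``the main obstacle.'' However, the step you defer cannot actually be carried out from Definition~\ref{def:squmet} alone, and this exposes a gap in the proposition as stated, not only in your write-up.

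Concretely, \eqref{eq:50} is only asserted for $y$ in a ball $B_d(x,r)$ taken \emph{in the metric $d$ itself}, with $r=r(d,K)$. So, as you observe, to apply both estimates to a common pair $(y,z)$ one must first know that $y,z$ are $d_j$-close for both $j=1,2$, and you propose to obtain this by ``proving that any $\eta$-squeezed metric on $V$ induces the Euclidean topology.'' This is not provable from the definition: partition $\R^n=A\sqcup B$ with both parts Euclidean-dense, and set $\tilde d(x,y):=d_{V,\eta}(x,y)$ when $x,y$ lie in the same part and $\tilde d(x,y):=d_{V,\eta}(x,y)+1$ otherwise. One checks directly that $\tilde d$ is a metric; it is $\eta$-squeezed because, choosing the radius in Definition~\ref{def:squmet} smaller than $1$, any $\tilde d$-ball of radius $<1$ lies in a single part, where $\tilde d=d_{V,\eta}$; yet $\tilde d$ is not locally equivalent to $d_{V,\eta}$, since for $x\in A$ and $y\in B$ arbitrarily Euclidean-close to $x$ one has $\tilde d(x,y)\ge 1$ while $d_{V,\eta}(x,y)\to 0$. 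Thus the proposition requires a tacit additional hypothesis --- either that the $\eta$-squeezed metric is continuous with respect to the Euclidean topology (true for $d_V$, $d_{V,\eta}$, and all metrics used in the paper), or a reformulation of Definition~\ref{def:squmet} asking that \eqref{eq:50} hold for $y$ in a \emph{Euclidean} ball about $x$. Under either fix, your chaining argument completes the proof exactly as you wrote it.
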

\end{parag}

% \begin{parag}[Example]
% An example of a squeezed metric is given by $\R^3$ endowed with the Carnot--Carathéodory distance induced by the Heisenberg distribution of planes spanned by the vector fields 
% \[
% X = e_1 - 2y\,e_3 \qquad \text{and} \qquad Y = e_2 - 2x\,e_3,
% \]
% which is a $2$--squeezed distance. We define 
% \[
% V(z) = \mathrm{span}\{X(z), Y(z)\} \quad \text{and} \quad W(z) = \mathrm{span}\{e_3\}.
% \]
% Using \eqref{eq:10}, we know that for every $t = (t_1,t_2,t_3)$ with $\lvert t\rvert \leq \oldr{r:4}$ the exponential map satisfies
% \[
% \mathrm{Exp}_\zeta(t) = \zeta + t_1 X(\zeta) + t_2 Y(\zeta) + t_3 e_3 + \frac{\mathrm{d}^2\Exp_\zeta(s\,t)[t,t]}{2},
% \]
% for some $s \in [0,1]$, and, up to constants, we have $[X,Y] \equiv e_3$. It is immediate to see that 
% \[
% \left\lvert \Pi_\zeta^W\Bigl[\mathrm{Exp}_\zeta(t)-\zeta\Bigr]\right\rvert 
% =\lvert t_3\rvert + \Bigl\lvert \Pi_\zeta^V\Bigl[\frac{\mathrm{d}^2\Exp_\zeta(s\,t)[t,t]}{2}\Bigr]\Bigr\rvert
% \leq \lvert t_3\rvert + \mathfrak{m}\lvert t\rvert^2.
% \]
% Then, by Theorem~\ref{BallBox}, we deduce that $d_V$ is indeed $2$--squeezed.

% The same argument can be employed to prove that every Carnot--Carathéodory distance relative to a step~$2$ distribution is $2$--squeezed.
% \end{parag}

\subsection{Construction of squeezed metrics}

\begin{parag}[The $d_{V,\eta}$ metrics]\label{dVmtr} In order to show that Definition \ref{def:squmet} makes sense, we must construct $\eta$-squeezed metrics on $V$. To do so, fix an $\eta\in[1,2]$ and for any curve $\gamma\in \Lip([0,1],\R^n)$ define:
$$\ell_{\eta}(\gamma):=\sup_{t\in[0,1]} \max_{v\in \mathcal{D} \gamma(t)}\big(\lvert\Pi_{\gamma(t)}^V[v]\rvert+\lvert\Pi_{\gamma(t)}^W[v]\rvert^{\frac{1}{\eta}}\big),$$
where $\mathcal{D}\gamma(t)$ is the derived set of $\gamma$ at $t$, see Definition \eqref{def:derived}.
The anisotropic length $\ell_\eta$ induces a distance.

\begin{lemma}
Let $d_{V,\eta}:\R^n\times\R^n\to\R$ be the function defined as: 
$$d_{V,\eta}(x,y):=\inf\{\ell_{\eta}(\gamma):\gamma\in \Lip([0,1],\R^n),~\gamma(0)=x,~\gamma(1)=y\}.$$
Then $d_{V,\eta}$ is a metric, which we will refer to as the $\eta$\emph{-box distance}.
\end{lemma}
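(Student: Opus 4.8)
The plan is to verify the three defining properties of a metric; everything is elementary except the triangle inequality, which is the real point. Throughout, write $F_\eta(p,v):=\lvert\Pi_p^V[v]\rvert+\lvert\Pi_p^W[v]\rvert^{1/\eta}$, so that $\ell_\eta(\gamma)=\sup_{t\in[0,1]}\max_{v\in\mathcal D\gamma(t)}F_\eta(\gamma(t),v)$, and note that $v\mapsto F_\eta(p,v)$ is even.

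\emph{Finiteness, vanishing on the diagonal, symmetry and positivity.} For $d_{V,\eta}(x,y)<\infty$ one tests the infimum with the affine segment $\sigma(t)=(1-t)x+ty$: it is $C^1$, so $\mathcal D\sigma(t)=\{y-x\}$, and since $p\mapsto\Pi_p^V,\Pi_p^W$ are continuous and $\sigma([0,1])$ is compact, $\ell_\eta(\sigma)<\infty$. The constant curve gives $d_{V,\eta}(x,x)=0$. For symmetry, if $\gamma$ joins $x$ to $y$ then $\check\gamma(t):=\gamma(1-t)$ joins $y$ to $x$, $\check\gamma(t)=\gamma(1-t)$, and a direct inspection of \eqref{def:derived} gives $\mathcal D\check\gamma(t)=-\mathcal D\gamma(1-t)$; since $F_\eta$ is even in $v$, $\ell_\eta(\check\gamma)=\ell_\eta(\gamma)$, whence $d_{V,\eta}$ is symmetric. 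For positivity, fix $x$ and use that $\R^n=V(p)\oplus W(p)$ varies continuously to find a closed ball $B(x,r_0)$ and $C_0\ge1$ with $\lvert v\rvert\le C_0\bigl(\lvert\Pi_p^V[v]\rvert+\lvert\Pi_p^W[v]\rvert\bigr)$ for all $p\in B(x,r_0)$; if $\ell_\eta(\gamma)=L\le\min\{1,r_0/2C_0\}$, then along the maximal initial arc of $\gamma$ contained in $B(x,r_0)$ one has $\lvert\Pi^V_{\gamma(t)}[v]\rvert\le L$ and $\lvert\Pi^W_{\gamma(t)}[v]\rvert\le L^{\eta}\le L$ for $v\in\mathcal D\gamma(t)$ (here $\eta\ge1$ and $L\le1$), hence $\lvert v\rvert\le 2C_0L$, and Proposition~\ref{prop:mean} forces that arc to be all of $[0,1]$ with $\lvert x-y\rvert\le 2C_0L$; letting $L\downarrow0$ yields $d_{V,\eta}(x,y)=0\Rightarrow x=y$.

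\emph{Triangle inequality.} Let $\gamma_1$ join $x$ to $y$ and $\gamma_2$ join $y$ to $z$. Gluing them directly on $[0,\tfrac12]\cup[\tfrac12,1]$ fails: at the junction $\mathcal D$ of the glued curve contains the ``mixed'' limits $\theta w_2+(1-\theta)w_1$ of two–sided difference quotients, with $w_i$ rescalings of elements of $\mathcal D\gamma_i$, and one can build explicit examples in which $F_\eta$ at such a point exceeds $\ell_\eta(\gamma_1)+\ell_\eta(\gamma_2)$. The remedy is to flatten the endpoints first. Fix $\epsilon>0$ and a $C^1$ nondecreasing surjection $\psi\colon[0,1]\to[0,1]$ with $\psi'(0)=\psi'(1)=0$ and $\sup\psi'\le1+\epsilon$ (such $\psi$ exist, e.g.\ as normalized integrals of a suitable bump). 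A direct computation from \eqref{def:derived} gives $\mathcal D(\gamma\circ\psi)(t)\subseteq\{\psi'(t)\,v:v\in\mathcal D\gamma(\psi(t))\}$; combined with $1/\eta\le1$ (so that $c^{1/\eta}\le c$ for $c\ge1$ and $c^{1/\eta}\le\max\{1,c\}$ in general) this yields $\ell_\eta(\gamma_i\circ\psi)\le(1+\epsilon)\,\ell_\eta(\gamma_i)$, while at the same time $\mathcal D(\gamma_i\circ\psi)=\{0\}$ at $t=0$ and $t=1$, so that $F_\eta$ vanishes there.

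Now set $L_i:=\ell_\eta(\gamma_i\circ\psi)$, which are both $>0$ when $x,y,z$ are distinct (the non-distinct case being trivial), let $s:=L_1/(L_1+L_2)\in(0,1)$, reparametrize $\gamma_1\circ\psi$ affinely onto $[0,s]$ and $\gamma_2\circ\psi$ affinely onto $[s,1]$, and concatenate into a Lipschitz curve $\gamma$ joining $x$ to $z$. Affine compression by the factor $1/s>1$ multiplies $\mathcal D$ by $1/s$ and, because $(1/s)^{1/\eta}\le1/s$, multiplies $F_\eta$ by at most $1/s$; hence along $\gamma$ one has $F_\eta\le\tfrac1s L_1=L_1+L_2$ on $(0,s)$, $F_\eta\le\tfrac1{1-s}L_2=L_1+L_2$ on $(s,1)$, and at $t\in\{0,s,1\}$ one has $\mathcal D\gamma(t)=\{0\}$ — this last point at $t=s$ is exactly what the endpoint flattening buys, since the mixed limits there are convex combinations of rescalings of $0$. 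Therefore $\ell_\eta(\gamma)\le L_1+L_2\le(1+\epsilon)\bigl(\ell_\eta(\gamma_1)+\ell_\eta(\gamma_2)\bigr)$, so $d_{V,\eta}(x,z)\le(1+\epsilon)\bigl(\ell_\eta(\gamma_1)+\ell_\eta(\gamma_2)\bigr)$; taking the infimum over $\gamma_1,\gamma_2$ and then $\epsilon\downarrow0$ gives $d_{V,\eta}(x,z)\le d_{V,\eta}(x,y)+d_{V,\eta}(y,z)$.

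\emph{Main obstacle.} The only delicate point is the triangle inequality: $\ell_\eta$ is a supremum of the \emph{non} $1$-homogeneous ``speed'' $F_\eta$, so it is not reparametrization invariant and a naive concatenation is genuinely defective at the junction. The two ingredients that save the argument are the $C^1$ time change with vanishing endpoint derivative (killing the mixed limits) and the anisotropic splitting $s=L_1/(L_1+L_2)$; the computation to watch is that $F_\eta$ scales \emph{sub}-linearly under time compression, which holds precisely because $\eta\ge1$ forces the exponent $1/\eta$ attached to the $W$-component to be $\le1$.
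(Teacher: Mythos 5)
Your proof is correct, and it in fact repairs a genuine gap in the paper's own argument. The paper concatenates $\gamma_1$ (compressed onto $[0,\lambda]$) and $\gamma_2$ (compressed onto $[\lambda,1]$) with $\lambda=\bigl(\tfrac{\ell_\eta(\gamma_1)}{\ell_\eta(\gamma_1)+\ell_\eta(\gamma_2)}\bigr)^\eta$ and asserts, without justification, that
\[
\ell_\eta(\gamma)\le\max\Bigl\{\frac{\ell_\eta(\gamma_1)}{\lambda^{1/\eta}},\ \frac{\ell_\eta(\gamma_2)}{(1-\lambda)^{1/\eta}}\Bigr\}.
\]
This inequality fails. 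Affine compression by $1/\lambda$ multiplies the derived set by $1/\lambda$, so the anisotropic speed $F_\eta(p,v)=|\Pi_p^V v|+|\Pi_p^W v|^{1/\eta}$ becomes $|\Pi_p^V v|/\lambda+|\Pi_p^W v|^{1/\eta}/\lambda^{1/\eta}$, whose supremum over $F_\eta\le\ell_\eta(\gamma_1)$ is $\ell_\eta(\gamma_1)/\lambda$ (the $V$-part picks up the \emph{larger} factor $1/\lambda\ge 1/\lambda^{1/\eta}$), not $\ell_\eta(\gamma_1)/\lambda^{1/\eta}$. Concretely, with $V,W$ constant coordinate planes, $\eta=2$, $\gamma_1(t)=te_1$, $\gamma_2(t)=e_1+te_2$, one has $\ell_\eta(\gamma_1)=\ell_\eta(\gamma_2)=1$, $\lambda=1/4$, and the compressed $\gamma_1$ has constant derived vector $4e_1$, so $\ell_\eta(\gamma)\ge 4>2$. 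Moreover, even with the ``right'' split $\lambda=\ell_\eta(\gamma_1)/(\ell_\eta(\gamma_1)+\ell_\eta(\gamma_2))$ (for which each half does satisfy the desired bound) the junction issue you describe remains: in the same example the derived set at $t=\lambda$ contains the mixed vectors $2\theta e_1+2(1-\theta)e_2$, and $F_\eta=2\theta+\sqrt{2(1-\theta)}$ attains $9/4>2$ at $\theta=7/8$.

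Your two adjustments precisely close both gaps: the split $s=L_1/(L_1+L_2)$ combined with the observation that $(1/s)^{1/\eta}\le 1/s$ makes compression multiply $F_\eta$ by at most $1/s$, giving $L_1+L_2$ on each half; and the $C^1$ endpoint-flattening time change $\psi$ with $\psi'(0)=\psi'(1)=0$ forces $\mathcal D(\gamma_i\circ\psi)$ to vanish at the endpoints, hence $\mathcal D\gamma(s)=\{0\}$, at the harmless cost of a $(1+\epsilon)$ factor that disappears in the limit. (One minor remark: the inclusion $\mathcal D(\gamma\circ\psi)(t)\subseteq\{\psi'(t)v:v\in\mathcal D\gamma(\psi(t))\}$ should be read with the caveat that when $\psi'(t)=0$ the left-hand side is $\{0\}$ because the factor $\frac{\psi(a)-\psi(b)}{a-b}\to 0$ while the other factor stays bounded by $\Lip(\gamma)$; this is exactly what your argument uses.) The lemma as stated is true and your proof establishes it; the paper's written proof, as it stands, does not.
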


\begin{remark}
The definition of $\eta$-box distance is inspired by \cite[Definition 1.1]{nagel}.
\end{remark}

\begin{proof}
Since identity and symmetry are trivially satisfied, the only condition to verify in order to prove that $d_{V,\eta}$ is a distance, is the triangular inequality. Fix $x,y,z\in\R^n$,  $\epsilon>0$ and let $\gamma_1,\gamma_2\in\Lip([0,1],\R^n)$ be curves joining $x$ to $y$ and $y$ to $z$ respectively and suppose:
$$\ell_\eta(\gamma_1)\leq d(x,y)+\epsilon\qquad \text{and}\qquad\ell_\eta(\gamma_2)\leq d(y,z)+\epsilon.$$
Having defined $\lambda:=\big(\frac{\ell_\eta(\gamma_1)}{\ell_\eta(\gamma_1)+\ell_\eta(\gamma_2)}\big)^\eta\in (0,1)$, we let $\gamma\in\Lip([0,1],\R^n)$ be the concatenation of $\gamma_1$ and $\gamma_2$ defined as
$$\gamma(t):=\begin{cases}\gamma_1\Big(\frac{t}{\lambda}\Big) &\text{ if }t\in[0,\lambda],\\
\gamma_2\Big(\frac{t-\lambda}{1-\lambda}\Big) &\text{ if }t\in[\lambda,1].
\end{cases}$$
The curve $\gamma$ joins $x$ and $z$ and
$$d_{V,\eta}(x,z)\leq\ell_\eta(\gamma)\leq \max\bigg\{\frac{\ell_\eta(\gamma_1)}{\lambda^{1/\eta}},\frac{\ell_\eta(\gamma_2)}{(1-\lambda)^{1/\eta}}\bigg\}\leq \ell_\eta(\gamma_1)+\ell_\eta(\gamma_2),$$
where the last inequality comes from the definition of $\lambda$ and few algebraic computations. Thanks to the choice of $\gamma_1$ and $\gamma_2$, we have that: $$d_{V,\eta}(x,z)\leq d_{V,\eta}(x,y)+d_{V,\eta}(y,z)+2\epsilon,$$ which by arbitrariness of $\epsilon$, concludes the proof.
\end{proof}

\begin{lemma}
Let $d_{V,\eta} \colon \R^n \times \R^n \to \R$ be defined by
\[
d_{V,\eta}(x,y) := \inf\Bigl\{\ell_\eta(\gamma) : \gamma \in \Lip([0,1],\R^n),\; \gamma(0)=x,\; \gamma(1)=y\Bigr\}.
\]
Then $d_{V,\eta}$ is a metric on $\R^n$, which we call the \emph{$\eta$-box distance}.
\end{lemma}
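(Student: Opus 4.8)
This is exactly the lemma established immediately above, so the plan is to recall that argument, organised around the three defining properties of a metric. Reflexivity $d_{V,\eta}(x,x)=0$ follows by testing with the constant curve, whose derived set at every time is $\{0\}$, so that $\ell_\eta$ vanishes on it; symmetry holds because reversing the parametrisation $t\mapsto 1-t$ of any admissible curve leaves every derived set unchanged, hence leaves $\ell_\eta$ unchanged; and $d_{V,\eta}(x,y)<\infty$ because the affine segment from $x$ to $y$ is admissible and has finite anisotropic length. For the non-degeneracy $d_{V,\eta}(x,y)=0\Rightarrow x=y$ I would argue locally: on a compact neighbourhood $K$ of $x$ the linear projections $\Pi^V_\cdot$ and $\Pi^W_\cdot$ are uniformly bounded, say by $C_0$, so for any competitor $\gamma$ with $\ell_\eta(\gamma)<\min\{1,\delta\}$ that remains in $K$ one has $|\gamma'(t)|\le C_0\big(|\Pi^V_{\gamma(t)}\gamma'(t)|+|\Pi^W_{\gamma(t)}\gamma'(t)|\big)\le C_0\,\ell_\eta(\gamma)$, using $|w|\le|w|^{1/\eta}$ for $|w|\le 1$; a short bootstrap shows the curve does stay in $K$ once $\delta$ is small, whence $|x-y|\le C_0\delta$, and $\delta\to0$ forces $x=y$.

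The substantive content is the triangle inequality, and here I would repeat the concatenation argument of the previous lemma. Given $x,y,z$ and $\varepsilon>0$, pick competitors $\gamma_1$ joining $x$ to $y$ and $\gamma_2$ joining $y$ to $z$ with $\ell_\eta(\gamma_i)$ within $\varepsilon$ of $d_{V,\eta}(x,y)$ and $d_{V,\eta}(y,z)$ respectively; set $\lambda:=\bigl(\ell_\eta(\gamma_1)/(\ell_\eta(\gamma_1)+\ell_\eta(\gamma_2))\bigr)^{\eta}\in(0,1)$ and let $\gamma$ be the curve that runs $\gamma_1$ affinely reparametrised onto $[0,\lambda]$ followed by $\gamma_2$ affinely reparametrised onto $[\lambda,1]$. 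The affine reparametrisation scales every element of every derived set of $\gamma_1$ by $1/\lambda$, and tracking the effect on the anisotropic integrand gives, as in the preceding proof, $\ell_\eta(\gamma|_{[0,\lambda]})\le \ell_\eta(\gamma_1)/\lambda^{1/\eta}$ and $\ell_\eta(\gamma|_{[\lambda,1]})\le \ell_\eta(\gamma_2)/(1-\lambda)^{1/\eta}$; by the defining choice of $\lambda$ both bounds equal $\ell_\eta(\gamma_1)+\ell_\eta(\gamma_2)$, so $\ell_\eta(\gamma)\le \ell_\eta(\gamma_1)+\ell_\eta(\gamma_2)$. Therefore $d_{V,\eta}(x,z)\le \ell_\eta(\gamma)\le d_{V,\eta}(x,y)+d_{V,\eta}(y,z)+2\varepsilon$, and letting $\varepsilon\to 0$ concludes.

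The one delicate step — the main obstacle — is the behaviour of $\ell_\eta$ under reparametrisation, which is governed by the fact that $v\mapsto |\Pi^V v|+|\Pi^W v|^{1/\eta}$ is not positively homogeneous: a naive split at $t=1/2$ would not recover the sum $\ell_\eta(\gamma_1)+\ell_\eta(\gamma_2)$. The choice of $\lambda$ as the $\eta$-th power of the proportion $\ell_\eta(\gamma_1)/(\ell_\eta(\gamma_1)+\ell_\eta(\gamma_2))$ is precisely what balances the two reparametrised pieces, and the only computation one truly has to verify is the elementary inequality $\max\bigl\{\ell_\eta(\gamma_1)/\lambda^{1/\eta},\,\ell_\eta(\gamma_2)/(1-\lambda)^{1/\eta}\bigr\}\le \ell_\eta(\gamma_1)+\ell_\eta(\gamma_2)$ for this $\lambda$, which is carried out in the proof of the preceding lemma.
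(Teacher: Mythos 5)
Your outline follows the same concatenation strategy as the paper's proof (which it essentially reproduces), and the treatment of reflexivity and symmetry is sound. The non‑degeneracy argument is correct in spirit but more convoluted than necessary: since $v=\Pi^V_p v+\Pi^W_p v$ at every point, one gets $|v|\le|\Pi^V_p v|+|\Pi^W_p v|\le|\Pi^V_p v|+|\Pi^W_p v|^{1/\eta}$ whenever $|\Pi^W_p v|\le 1$, so any curve with $\ell_\eta(\gamma)<1$ has all derived vectors of Euclidean norm at most $\ell_\eta(\gamma)$, and Proposition~\ref{prop:mean} gives $|x-y|\le\ell_\eta(\gamma)$ directly — no compact neighbourhood, no operator‑norm bound $C_0$, no bootstrap.

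The genuine gap is in the triangle inequality. You assert that ``by the defining choice of $\lambda$ both bounds equal $\ell_\eta(\gamma_1)+\ell_\eta(\gamma_2)$.'' Only the first does: from $\lambda^{1/\eta}=\ell_\eta(\gamma_1)/(\ell_\eta(\gamma_1)+\ell_\eta(\gamma_2))$ one has $\ell_\eta(\gamma_1)/\lambda^{1/\eta}=\ell_\eta(\gamma_1)+\ell_\eta(\gamma_2)$ exactly, but $(1-\lambda)^{1/\eta}$ is in general \emph{not} equal to $1-\lambda^{1/\eta}$ — for instance with $\eta=2$ and $\lambda^{1/2}=1/2$ one gets $(1-\lambda)^{1/2}=\sqrt{3}/2\neq 1/2$ — so $\ell_\eta(\gamma_2)/(1-\lambda)^{1/\eta}$ does not equal the sum. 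The paper closes this by invoking the concavity of $t\mapsto t^{1/\eta}$: since this map is concave with value $0$ at $0$, it is superadditive, so $(1-\lambda)^{1/\eta}+\lambda^{1/\eta}\ge 1$, i.e.\ $(1-\lambda)^{1/\eta}\ge 1-\lambda^{1/\eta}$, which yields $\ell_\eta(\gamma_2)/(1-\lambda)^{1/\eta}\le \ell_\eta(\gamma_2)/(1-\lambda^{1/\eta})=\ell_\eta(\gamma_1)+\ell_\eta(\gamma_2)$. Your conclusion is right and the direction of the error is harmless, but the equality you state is false, and the concavity estimate is precisely the step you claim is ``the only computation one truly has to verify'' without actually supplying it.
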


\begin{remark}
The definition of the $\eta$-box distance is inspired by \cite[Definition 1.1]{nagel}.
\end{remark}

\begin{proof}
Since the properties of non-degeneracy and symmetry are trivially satisfied, it suffices to verify the triangle inequality.

Fix $x,y,z \in \R^n$ and $\epsilon>0$. Choose curves $\gamma_1,\gamma_2 \in \Lip([0,1],\R^n)$ joining $x$ to $y$ and $y$ to $z$, respectively, such that
\[
\ell_\eta(\gamma_1) \le d_{V,\eta}(x,y) + \epsilon \quad \text{and} \quad \ell_\eta(\gamma_2) \le d_{V,\eta}(y,z) + \epsilon.
\]
Define
$\lambda := (\frac{\ell_\eta(\gamma_1)}{\ell_\eta(\gamma_1)+\ell_\eta(\gamma_2)})^\eta \in (0,1)$ and consider the concatenated curve $\gamma \in \Lip([0,1],\R^n)$ defined by
\[
\gamma(t) := \begin{cases}
\gamma_1\Bigl(\frac{t}{\lambda}\Bigr) & \text{if } t \in [0,\lambda],\\[1mm]
\gamma_2\Bigl(\frac{t-\lambda}{1-\lambda}\Bigr) & \text{if } t \in [\lambda,1].
\end{cases}
\]
Then $\gamma$ joins $x$ to $z$. Moreover, one can show that
\[
\ell_\eta(\gamma) \le \max\Biggl\{ \frac{\ell_\eta(\gamma_1)}{\lambda^{1/\eta}},\; \frac{\ell_\eta(\gamma_2)}{(1-\lambda)^{1/\eta}} \Biggr\}.
\]
By definition, of $\lambda$ we have $\ell_\eta(\gamma_1)/\lambda^{1/\eta} = \ell_\eta(\gamma_1)+\ell_\eta(\gamma_2)$. Moreover, since $t\mapsto t^{1/\eta}$ is concave, we have $(1-\lambda)^{1/\eta} \ge 1-\lambda^{1/\eta}$, which implies that $\ell_\eta(\gamma_2)/(1-\lambda)^{1/\eta} \le \ell_\eta(\gamma_1)+\ell_\eta(\gamma_2)$. Therefore, 
\[
\ell_\eta(\gamma) \le \max\Bigl\{\frac{\ell_\eta(\gamma_1)}{\lambda^{1/\eta}},\,\frac{\ell_\eta(\gamma_2)}{(1-\lambda)^{1/\eta}}\Bigr\} \le \ell_\eta(\gamma_1)+\ell_\eta(\gamma_2).
\]
It follows that
\[
d_{V,\eta}(x,z) \le \ell_\eta(\gamma) \le \ell_\eta(\gamma_1)+\ell_\eta(\gamma_2) \le \bigl(d_{V,\eta}(x,y)+d_{V,\eta}(y,z)+2\epsilon\bigr).
\]
Since $\epsilon>0$ is arbitrary, the triangle inequality holds, and the proof is complete.
\end{proof}

The following proposition relates the $\eta$-box distance to the Euclidean metric, showing that they are H\"older equivalent.

\begin{proposition}\label{prop:anis}
For every $x\in\R^n$, one has
\begin{equation}
\begin{split}
\frac{2}{3}\lvert x-y\rvert \le d_{V,\eta}(x,y) \le 2\lvert x-y\rvert^{1/\eta} \quad &\text{for all } y\in U(x,4^{-\eta}),\\[1mm]
\lvert x-y\rvert \le d_{V,\eta}(x,y) \quad &\text{for all } y\in B_{d_{V,\eta}}(x,1).
\end{split}
\end{equation}
\end{proposition}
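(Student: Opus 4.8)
The plan is to estimate $d_{V,\eta}(x,y)$ from both sides by comparing the anisotropic length $\ell_\eta$ with the Euclidean length along suitable curves. For the lower bounds, observe first that for any Lipschitz curve $\gamma$ and almost every $t$ at which $\gamma$ is differentiable, the derived set $\mathcal D\gamma(t)$ is the singleton $\{\dot\gamma(t)\}$, and the integrand in $\ell_\eta$ controls $|\dot\gamma(t)|$: since $\Pi^V_{\gamma(t)}$ and $\Pi^W_{\gamma(t)}$ are (locally) bounded linear projections adding up to the identity, one has a local bound $|v|\le C\,(|\Pi^V_{\gamma(t)}[v]|+|\Pi^W_{\gamma(t)}[v]|)$. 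When the relevant quantities are $\le 1$ one also has $|\Pi^W_{\gamma(t)}[v]|\le |\Pi^W_{\gamma(t)}[v]|^{1/\eta}$, so that $|v|\lesssim |\Pi^V_{\gamma(t)}[v]|+|\Pi^W_{\gamma(t)}[v]|^{1/\eta}$. Integrating and using $|\gamma(1)-\gamma(0)|\le\int_0^1|\dot\gamma(t)|\,dt$ together with Proposition \ref{prop:mean} gives $|x-y|\le c\,\ell_\eta(\gamma)$ for any admissible $\gamma$, hence $|x-y|\le c\,d_{V,\eta}(x,y)$; the only point requiring care is to quantify the constant $c$ so it yields exactly $2/3$ (resp.\ $1$) in the two regimes, which is done by choosing the neighbourhood $U(x,4^{-\eta})$ (resp.\ $B_{d_{V,\eta}}(x,1)$) small enough that the projections $\Pi^V,\Pi^W$ at points near $x$ differ from those at $x$ by a controlled amount, and that the a priori diameter of any near-optimal curve is small. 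For curves that wander far from $x$ one uses the second (cheaper) lower bound to confine near-optimal curves to a small ball first.

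For the upper bound $d_{V,\eta}(x,y)\le 2|x-y|^{1/\eta}$ one simply tests with the straight segment $\sigma(t):=x+t(y-x)$. Along $\sigma$ the velocity is the constant vector $y-x$, and $\ell_\eta(\sigma)=\sup_t(|\Pi^V_{\sigma(t)}[y-x]|+|\Pi^W_{\sigma(t)}[y-x]|^{1/\eta})$. Wait — that is not reparametrization-invariant in the naive way, so one must instead reparametrize $\sigma$ to balance the two terms, or more simply bound each term: $|\Pi^V_{\sigma(t)}[y-x]|\le C|x-y|$ and $|\Pi^W_{\sigma(t)}[y-x]|^{1/\eta}\le C|x-y|^{1/\eta}$, and when $|x-y|\le 4^{-\eta}$ the first of these is dominated (up to constants) by the second since $|x-y|\le|x-y|^{1/\eta}$ for $|x-y|\le 1$. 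Choosing the neighbourhood radius $4^{-\eta}$ and tracking constants — using that near $x$ the projections are close to the identity and that $4^{-1/\eta}\cdot(\text{const})\le 2$ — gives $\ell_\eta(\sigma)\le 2|x-y|^{1/\eta}$, hence the claim.

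The main obstacle is the \emph{lower bound}, specifically promoting a pointwise/infinitesimal inequality between $\ell_\eta$ and Euclidean length into the stated global inequality with the explicit constants $2/3$ and $1$, because a priori a competitor curve $\gamma$ joining $x$ to $y$ need not stay in the small neighbourhood where the projections are well controlled. The fix is a bootstrap: one first establishes a crude bound (valid on a fixed ball, using compactness and the $\oplus$ decomposition) showing any curve with small $\ell_\eta$ stays near $x$, then re-runs the estimate with sharp constants on that smaller ball; the concavity of $t\mapsto t^{1/\eta}$ and the inequality $|\Pi^W[v]|\le|\Pi^W[v]|^{1/\eta}$ on the unit scale are the elementary facts that make the constants work out. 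All remaining computations are routine and can be omitted.
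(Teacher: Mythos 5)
Your proposal follows the same basic strategy as the paper's proof: the upper bound is obtained by testing with the straight segment, the lower bound by controlling the Euclidean speed of any competitor by its anisotropic weight, and the upper bound is used to set up the lower one. So the architecture is right. However, you introduce two unnecessary complications in the lower bound, and one of them actually threatens the sharp constant $2/3$.

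You write $|v|\le C\bigl(|\Pi^V_{\gamma(t)}[v]|+|\Pi^W_{\gamma(t)}[v]|\bigr)$ with a ``local'' constant $C$, and then plan to shrink the neighbourhood so that the projections near $x$ are close to those at $x$ and $C$ can be taken close to $1$. This is a misconception: by construction $\Pi^V_z+\Pi^W_z=\mathrm{id}$ at \emph{every} point $z$, so the plain triangle inequality gives
\[
|v|=|\Pi^V_{\gamma(t)}[v]+\Pi^W_{\gamma(t)}[v]|\le|\Pi^V_{\gamma(t)}[v]|+|\Pi^W_{\gamma(t)}[v]|,
\]
with constant exactly $1$, uniformly over all of $\R^n$. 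No locality and no constant-tracking are needed, and in fact the radius $4^{-\eta}$ in the statement is not ``small enough'' in the sense your plan requires, so the detour would not obviously deliver $2/3$.

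For the same reason, your worry about competitor curves wandering far from $x$, and the proposed bootstrap (``first confine near-optimal curves to a small ball, then re-run the estimate''), are not needed. The projections are controlled everywhere; the only place a unit scale is used is the step $|\Pi^W_{\gamma(t)}[v]|\le|\Pi^W_{\gamma(t)}[v]|^{1/\eta}$, which requires $|\Pi^W_{\gamma(t)}[v]|\le 1$. This is guaranteed not by confining the curve geometrically, but by observing that the already-proved upper bound forces $d_{V,\eta}(x,y)\le 1/2$ when $|x-y|\le 4^{-\eta}$, so any competitor with $\tfrac23\ell_\eta(\gamma)\le d_{V,\eta}(x,y)$ has $\ell_\eta(\gamma)\le 3/4$, hence $|\Pi^W_{\gamma(t)}[v]|^{1/\eta}\le 1$ for all $t$ and all $v\in\mathcal D\gamma(t)$. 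The chain $|x-y|\le\int_0^1|\dot\gamma|\,dt\le\sup_t\max_{v\in\mathcal D\gamma(t)}|v|\le\ell_\eta(\gamma)\le\tfrac32\,d_{V,\eta}(x,y)$ then gives the $2/3$ directly, and the second inequality follows analogously for $y\in B_{d_{V,\eta}}(x,1)$.
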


\begin{proof}
We first prove the upper bound. Let $\sigma\colon [0,1]\to\R^n$ be the straight-line segment defined by $\sigma(t)=x+t(y-x)$.
Then, by the definition of the $\eta$-length,
\[
\ell_\eta(\sigma) \le \lvert\Pi^V_{\sigma(t)}(y-x)\rvert + \lvert\Pi^W_{\sigma(t)}(y-x)\rvert^{1/\eta}.
\]
For $y\in U(x,4^{-\eta})$, one easily verifies that
\[
\lvert\Pi^V_{\sigma(t)}(y-x)\rvert + \lvert\Pi^W_{\sigma(t)}(y-x)\rvert^{1/\eta} \le 2\lvert y-x\rvert^{1/\eta}.
\]
Since $d_{V,\eta}(x,y)$ is the infimum of $\eta$-lengths over all curves joining $x$ and $y$, this shows that
\[
d_{V,\eta}(x,y) \le 2\lvert y-x\rvert^{1/\eta}.
\]

Next, we establish the lower bound. Let $\gamma\in \Lip([0,1],\R^n)$ be a curve joining $x$ and $y$ such that
\[
\frac{2}{3}\,\ell_\eta(\gamma) \le d_{V,\eta}(x,y).
\]
The previous step implies that, when $\lvert x-y\rvert\le 4^{-\eta}$, one has $d_{V,\eta}(x,y)\le 1/2$. Hence, $\ell_\eta(\gamma)\le 3/4$. In particular, for every $t\in[0,1]$ we have that for every $v\in \mathcal{D}\gamma(t)$ we have
\begin{equation}\label{eq:2000}
\lvert\Pi^V_{\sigma(t)}[v]\rvert + \lvert\Pi^W_{\sigma(t)}[v]\rvert^{1/\eta} \le 1.
\end{equation}
Since any vector decomposes as $v=\Pi^V_{\sigma(t)}[v]+\Pi^W_{\sigma(t)}[v]$, the triangle inequality yields
\[
\lvert v\rvert \le \lvert\Pi^V_{\sigma(t)}[v]\rvert + \lvert\Pi^W_{\sigma(t)}[v]\rvert\le \lvert\Pi^V_{\sigma(t)}[v]\rvert + \lvert\Pi^W_{\sigma(t)}[v]\rvert^{1/\eta},
\]
where the last inequality follows from \eqref{eq:2000}.
Thus, by \eqref{eq:2000} we have $\lvert v\rvert\le 1$ for every $v\in\mathcal{D}\gamma(t)$ and every $t\in[0,1]$. It follows that
\[
\lvert x-y\rvert \le \int_0^1 \lvert \dot{\gamma}(t)\rvert\,dt \le \sup_{t\in[0,1]} \max_{v\in\mathcal{D}\gamma(t)} \lvert v\rvert \le \ell_\eta(\gamma) \le \frac{3}{2}\,d_{V,\eta}(x,y),
\]
or in other words
\[
\frac{2}{3}\lvert x-y\rvert \le d_{V,\eta}(x,y).
\]
The proof of the final inequality follows by an analogous argument.
\end{proof}

In the next proposition it will be important to compare balls centered at different points. Since the vector fields $X_1,\dots,X_n$ are smooth, the maps
\[
x\mapsto \Pi_x^V \quad\text{and}\quad x\mapsto \Pi_x^W
\]
are smooth. Hence, for every $R=R(x)>0$ small enough there exists a constant $C(R)>1$ such that
\begin{equation}
    \|\Pi_y^V-\Pi_z^V\|\le C(R)|y-z| \,\text{ and }\, \|\Pi_y^W-\Pi_z^W\|\le C(R)|y-z|,
    \label{eq:41}
\end{equation}
for every $y,z\in B(x,R)$.

\begin{proposition}\label{prop:loc:equiv}
For any $\eta\in[1,2)$ the metric $d_{V,\eta}$ is $\eta$-squeezed on $V$. In particular, for any $R>0$ and $x\in U(0,R)$, one has
\begin{equation}\label{eq:ineq}
\frac{1}{1+C(R)}\Bigl(|\Pi_x^V[y-x]|+|\Pi_x^W[y-x]|^{1/\eta}\Bigr)
\le d_{V,\eta}(x,y)\le 2\Bigl(|\Pi_x^V[y-x]|+|\Pi_x^W[y-x]|^{1/\eta}\Bigr),
\nonumber
\end{equation}
whenever 
$y\in U(x,\newr\label{r:20})\cap U(0,R)$, with
\[
\oldr{r:20}:=\min\{(4C(R))^{-\eta-1},(36C(R))^{-1/(2-\eta)}\}.
\]
\end{proposition}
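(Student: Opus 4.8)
The goal is to verify the two-sided estimate \eqref{eq:50} in Definition~\ref{def:squmet} for $d=d_{V,\eta}$ with the explicit radius $\oldr{r:20}$. The upper bound comes essentially for free: I would take the straight segment $\sigma(t)=x+t(y-x)$ and estimate $\ell_\eta(\sigma)$ directly. Along $\sigma$ the derived set is the singleton $\{y-x\}$, so $\ell_\eta(\sigma)=\sup_t(\lvert\Pi^V_{\sigma(t)}[y-x]\rvert+\lvert\Pi^W_{\sigma(t)}[y-x]\rvert^{1/\eta})$. Using \eqref{eq:41} to pass from $\Pi^V_{\sigma(t)},\Pi^W_{\sigma(t)}$ to $\Pi^V_x,\Pi^W_x$ at the cost of an error $C(R)\lvert y-x\rvert\cdot\lvert y-x\rvert$, and then absorbing this lower-order error into the leading terms using the constraint $\lvert y-x\rvert\le\oldr{r:20}$ (this is exactly where the first term $(4C(R))^{-\eta-1}$ in the definition of $\oldr{r:20}$ is used, together with the elementary inequality $a^{1/\eta}+b^{1/\eta}\le 2(a+b)^{1/\eta}$ type manipulations), gives $d_{V,\eta}(x,y)\le\ell_\eta(\sigma)\le 2(\lvert\Pi^V_x[y-x]\rvert+\lvert\Pi^W_x[y-x]\rvert^{1/\eta})$.

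For the lower bound I would argue as in the proof of Proposition~\ref{prop:anis}. Fix a competitor curve $\gamma\in\Lip([0,1],\R^n)$ joining $x$ to $y$ with $\ell_\eta(\gamma)\le \tfrac32 d_{V,\eta}(x,y)$; by the upper bound just proved and the smallness of $y$ we may assume $\ell_\eta(\gamma)$ is small, say $\le 1$, so that the whole curve stays inside $B(x,R)$ and $\lvert v\rvert\le 1$ for every $v\in\mathcal D\gamma(t)$ and every $t$ (the decomposition $v=\Pi^V[v]+\Pi^W[v]$ together with $\lvert\Pi^W[v]\rvert\le\lvert\Pi^W[v]\rvert^{1/\eta}$ when the latter is $\le1$ does this, exactly as in Proposition~\ref{prop:anis}). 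Now split the displacement $y-x$ as $\Pi^V_x[y-x]+\Pi^W_x[y-x]$ and estimate each piece along $\gamma$ via the Mean Value Theorem (Proposition~\ref{prop:mean}): for the $V$-component, $\lvert\Pi^V_x[y-x]\rvert\le\sup_t\max_{v\in\mathcal D\gamma(t)}\lvert\Pi^V_x[v]\rvert$, and then replace $\Pi^V_x$ by $\Pi^V_{\gamma(t)}$ using \eqref{eq:41}, picking up an error controlled by $C(R)\,\mathrm{diam}(\gamma)\le C(R)\cdot\frac32\ell_\eta(\gamma)$ times $\lvert v\rvert\le1$. For the $W$-component one has to be a bit more careful because of the $1/\eta$ exponent: estimate $\lvert\Pi^W_x[y-x]\rvert^{1/\eta}$ by first bounding $\lvert\Pi^W_x[y-x]\rvert$ (again MVT plus \eqref{eq:41}) and then taking the $1/\eta$ power, using sub-additivity of $t\mapsto t^{1/\eta}$ to separate the genuine $\ell_\eta$-term from the $C(R)$-error term; the error term here, after raising to the power $1/\eta$, is of order $(C(R)\ell_\eta(\gamma)^2)^{1/\eta}$, and the second constant $(36C(R))^{-1/(2-\eta)}$ in $\oldr{r:20}$ is precisely what is needed to dominate this by a fixed fraction of $\ell_\eta(\gamma)$. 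Summing the two contributions gives $\lvert\Pi^V_x[y-x]\rvert+\lvert\Pi^W_x[y-x]\rvert^{1/\eta}\le (1+C(R))\,\ell_\eta(\gamma)\le (1+C(R))\cdot\frac32 d_{V,\eta}(x,y)$, which after relabelling constants yields the stated inequality. Once both bounds hold on $U(x,\oldr{r:20})\cap U(0,R)$, the defining property of an $\eta$-squeezed metric is satisfied on every compact $K\subseteq\R^n$ by covering $K$ with finitely many such balls and taking the worst constant, so $d_{V,\eta}$ is $\eta$-squeezed on $V$.

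The main obstacle is the bookkeeping around the $1/\eta$ exponent in the lower bound: one cannot simply apply the triangle inequality to $\lvert\Pi^W_x[y-x]\rvert^{1/\eta}$, and the perturbation estimate \eqref{eq:41} is linear in the displacement while the $W$-part of the length weights displacements by their $1/\eta$-power, so matching the error to a controllable fraction of $\ell_\eta(\gamma)$ forces the quantitative choice of $\oldr{r:20}$ and requires the elementary but slightly delicate inequalities $(a+b)^{1/\eta}\le a^{1/\eta}+b^{1/\eta}$ and, for the error term, $(C r^2)^{1/\eta}\le\varepsilon r$ when $r\le\oldr{r:20}$. Everything else is a routine adaptation of the argument already carried out for Proposition~\ref{prop:anis}, with the constant $C(R)$ of \eqref{eq:41} inserted wherever one changes base point for the projections.
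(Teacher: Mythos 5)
Your plan is correct and follows the paper's overall strategy: the straight-line segment for the upper bound, a near-optimal competitor curve plus the projection-comparison estimate \eqref{eq:41} for the lower bound, and the explicit smallness of $\oldr{r:20}$ to absorb the quadratic errors and the $1/\eta$-power mismatch. The one place where you diverge from the paper is the extraction of the estimate $|\Pi_x^W[y-x]|\lesssim \ell_\eta(\gamma)^\eta + C(R)\ell_\eta(\gamma)^2$. You apply the Lipschitz half of Proposition~\ref{prop:mean} to the projected curve $\Pi_x^W\circ\gamma$, getting $|\Pi_x^W[y-x]|\le\sup_t\max_{v\in\mathcal D\gamma(t)}|\Pi_x^W[v]|$ (using that the derived set of $\Pi_x^W\gamma$ is the image of $\mathcal D\gamma(t)$ under the linear map $\Pi_x^W$), and then trade $\Pi_x^W$ for $\Pi_{\gamma(t)}^W$ via \eqref{eq:41}. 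The paper instead sets up the closed loop $s\mapsto\gamma(s)-x-s(y-x)$, projects, scalars against $\Pi_x^W[y-x]/|\Pi_x^W[y-x]|$, and applies the Rolle-type half of Proposition~\ref{prop:mean} to find a single $t$ and $w\in\mathcal D\gamma(t)$ with $\langle\Pi_x^W[w],\hat w\rangle=|\Pi_x^W[y-x]|$, which it then bounds the same way. Both routes hand you the same inequality; yours is marginally more elementary (a supremum bound rather than pinning down an exact stationary point), while the paper's Rolle trick is arguably cleaner bookkeeping and exhibits the derived vector responsible for the $W$-displacement. The paper also handles the $V$-part more cheaply via $|\Pi_x^V[y-x]|\le|y-x|+|\Pi_x^W[y-x]|\le d_{V,\eta}(x,y)+|\Pi_x^W[y-x]|$ (avoiding a second MVT application), but your direct MVT estimate for $\Pi_x^V[y-x]$ works just as well. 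One small caution: the absorption of the quadratic error for the $W$-part can be done slightly more simply than you suggest — since $d_{V,\eta}(x,y)<1$ and $\eta<2$, one has $\ell_\eta(\gamma)^2\le\ell_\eta(\gamma)^\eta$ directly, so there is no need to take the $1/\eta$-power of the error term before absorbing; the $(36C(R))^{-1/(2-\eta)}$ constraint is what is needed for the analogous absorption in the upper bound.
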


\begin{proof}
Throughout the proof we assume that $x,y\in U(0,R)$ and $d_{V,\eta}(x,y)\le r_{20}$.

\textbf{Upper bound.} Define the straight-line curve $\sigma(t)=x+t(y-x)$, and set $u:=y-x$. By definition of $d_{V,\eta}$ we have
\[
d_{V,\eta}(x,y)\le \sup_{t\in[0,1]}\Bigl(|\Pi_{\sigma(t)}^V[u]|+|\Pi_{\sigma(t)}^W[u]|^{1/\eta}\Bigr).
\]
Since
\[
|\Pi_{\sigma(t)}^V[u]|\le |\Pi_x^V[u]|+C(R)|u|^2,\qquad\text{and}\qquad
|\Pi_{\sigma(t)}^W[u]|\le |\Pi_x^W[u]|+C(R)|u|^2,
\]
we deduce
\[
d_{V,\eta}(x,y)\le |\Pi_x^V[u]|+C(R)|u|^2+\Bigl(|\Pi_x^W[u]|+C(R)|u|^2\Bigr)^{1/\eta}.
\]
Thanks to Lemma~\ref{prop:anis} and to the choice of $u$, one obtains
\[
d_{V,\eta}(x,y)\le 2\Bigl(|\Pi_x^V[u]|+|\Pi_x^W[u]|^{1/\eta}\Bigr),
\]
which is the upper bound in \eqref{eq:ineq}.

\textbf{Lower bound.} Let $\gamma\in\Lip([0,1],\R^n)$ be a curve joining $x$ to $y$ such that
\[
\ell_\eta(\gamma)\le \frac{2}{3}\,d_{V,\eta}(x,y).
\]
Then, by definition of $d_{V,\eta}$, for every $s\in[0,1]$ and every $v\in\mathcal{D}\gamma(s)$ we have
\begin{equation}\label{eq:1003}
|\Pi_{\gamma(s)}^V[v]|\le \frac{2}{3}\,d_{V,\eta}(x,y) \quad\text{and}\quad
|\Pi_{\gamma(s)}^W[v]|\le \Bigl(\frac{2}{3}\,d_{V,\eta}(x,y)\Bigr)^\eta.
\end{equation}
In particular, these inequalities imply that $|v|\le d_{V,\eta}(x,y)$, and  Proposition~\ref{prop:mean} yields
\[
|\gamma(s)-x|\le d_{V,\eta}(x,y)|s| \quad\text{for all } s\in[0,1].
\]
Moreover, again by Lemma~\ref{prop:anis}, we have
\begin{equation}\label{eq:1002}
|\Pi_x^V[y-x]|\le |y-x|+|\Pi_x^W[y-x]|\le d_{V,\eta}(x,y)+|\Pi_x^W[y-x]|.
\end{equation}
It now suffices to show that
\begin{equation}\label{eq:claim}
|\Pi_x^W[y-x]|\le (1+C(R))\,d_{V,\eta}(x,y)^\eta.
\end{equation}
Indeed, combining \eqref{eq:1002} and \eqref{eq:claim} yields
\[
|\Pi_x^V[y-x]|+|\Pi_x^W[y-x]|^{1/\eta}\le d_{V,\eta}(x,y)+(1+C(R))^{1/\eta}\,d_{V,\eta}(x,y)
\]
so that
\[
d_{V,\eta}(x,y)\ge \frac{1}{1+C(R)}\Bigl(|\Pi_x^V[y-x]|+|\Pi_x^W[y-x]|^{1/\eta}\Bigr).
\]

To prove \eqref{eq:claim}, define the Lipschitz function
\[
f(s)=\Bigl\langle \Pi_x^W\bigl[\gamma(s)-x-s(y-x)\bigr],\frac{\Pi_x^W[y-x]}{|\Pi_x^W[y-x]|}\Bigr\rangle, \quad s\in[0,1].
\]
Since $f(0)=f(1)=0$, Proposition~\ref{prop:mean} implies the existence of $t\in[0,1]$ such that $0\in\mathcal{D}f(t)$. Consequently, there exist sequences $a_i,b_i\to t$ with
\[
\lim_{i\to\infty}\frac{f(b_i)-f(a_i)}{b_i-a_i}=0,
\]
and, by the definition of the derived set,
\[
\lim_{i\to\infty}\frac{\gamma(b_i)-\gamma(a_i)}{b_i-a_i}=w\in\mathcal{D}\gamma(t).
\]
By continuity of the scalar product, we infer
\begin{equation}\label{eq:2001}
|\Pi_x^W[y-x]|=\Bigl\langle \Pi_x^W(w),\frac{\Pi_x^W[y-x]}{|\Pi_x^W[y-x]|}\Bigr\rangle.
\end{equation}
Using \eqref{eq:1003}, the bound $|w|\le d_{V,\eta}(x,y)$, and the Lipschitz property of $\Pi^W$, we obtain
\[
|\Pi_x^W[y-x]|\le |\Pi_{\gamma(t)}^W(w)|+C(R)|\gamma(t)-x|\,|w|\le d_{V,\eta}(x,y)^\eta+C(R)d_{V,\eta}(x,y)^2.
\]
Since $d_{V,\eta}(x,y)<1$, the claim \eqref{eq:claim} follows. This completes the proof.
\end{proof}

We further assume that we can find additional vector fields 
$X_{k+1},\ldots,X_n$,
which are commutators of degree 2 of elements of \(\mathcal{X}\) such that for every \(x\in\R^n\) the family
$\{X_1(x),\ldots,X_n(x)\}$
is linearly independent. We then set
$$
W(x)=\Span\{X_{k+1}(x),\ldots,X_n(x)\}.
$$

\begin{proposition}\label{inclusioncc}
The \cc metric \(d_V\) associated to \(V\), as introduced in \S\ref{ccdistance}, is \(2\)-squeezed on \(V\). In particular, the metrics \(d_V\) and \(d_{V,2}\) are locally equivalent.
\end{proposition}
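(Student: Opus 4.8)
The plan is to deduce the $2$-squeezedness of $d_V$ from the Ball--Box theorem (Theorem~\ref{BallBox}) together with the second-order expansion \eqref{eq:10} of the exponential map. Under the standing hypothesis the global frame $\{X_1,\dots,X_n\}$ has $\deg X_i=1$ for $i\le k$ and $\deg X_i=2$ for $i>k$, so the anisotropic box is $Q(\rho)=\{t:\ |t_i|\le\rho\ \text{for}\ i\le k,\ |t_i|\le\rho^2\ \text{for}\ i>k\}$, and the goal is to show that, locally and uniformly, the $d_V$-ball $B_{d_V}(x,\rho)$ is comparable to the anisotropic cylinder $\mathrm{Cyl}_x(\rho):=\{y:\ |\Pi_x^V[y-x]|\le\rho,\ |\Pi_x^W[y-x]|\le\rho^2\}$. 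First I would localise: fix a compact $K\subseteq\R^n$, cover it by finitely many of the patches $U_x$ of \S\ref{par:BBT} (a Lebesgue-number argument), prove the two inequalities of \eqref{eq:50} with $\eta=2$ on each patch with constants depending only on $x$, and take the maximum. Inside a fixed patch, all quantities coming from smoothness and compactness — a bound on $\|\mathrm d^2\Exp_\zeta\|$ on $U(0,r_4)$, the uniform invertibility of $\mathrm d\Exp_\zeta(0)$, and bounds on $\|\Pi_\zeta^V\|$, $\|\Pi_\zeta^W\|$ — are treated as fixed.

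The core step is to prove, for all sufficiently small $\rho$ and all $\zeta$ in the patch,
\[
\mathrm{Cyl}_\zeta(C^{-1}\rho)\ \subseteq\ \Exp_\zeta\bigl(Q(\rho)\bigr)\ \subseteq\ \mathrm{Cyl}_\zeta(C\rho).
\]
By \eqref{eq:10} one has $\Exp_\zeta(t)-\zeta=\sum_{i=1}^n t_iX_i(\zeta)+O(|t|^2)$ with a $\zeta$-uniform constant, where $\sum_{i\le k}t_iX_i(\zeta)\in V(\zeta)$ and $\sum_{i>k}t_iX_i(\zeta)\in W(\zeta)$; moreover the linear map $t\mapsto\sum_i t_iX_i(\zeta)$ is invertible with uniformly bounded inverse and carries $\R^k\times\{0\}$ onto $V(\zeta)$ and $\{0\}\times\R^{n-k}$ onto $W(\zeta)$. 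For the right-hand inclusion, $t\in Q(\rho)$ forces $|t|\lesssim\rho$, hence the error is $O(\rho^2)$, so the $V$-component of $\Exp_\zeta(t)-\zeta$ is $\lesssim\rho$ and its $W$-component is $\lesssim\rho^2$. For the left-hand inclusion, $y\in\mathrm{Cyl}_\zeta(\rho)$ gives $|y-\zeta|\lesssim\rho$, so — since $\Exp_\zeta$ is a uniform diffeomorphism near $0$ — one writes $y=\Exp_\zeta(t)$ with $|t|\lesssim\rho$, and reading off $|t_i|$ from the $V$- and $W$-parts of $y-\zeta$ (applying the bounded coordinate extraction and discarding the $O(|t|^2)=O(\rho^2)$ error) yields $|t_i|\lesssim\rho$ for $i\le k$ and $|t_i|\lesssim\rho^2$ for $i>k$, i.e.\ $t\in Q(C\rho)$. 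This is exactly where $\deg X_i=2$ for $i>k$ enters: the $W$-directions are quadratically small.

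Combining the display with Theorem~\ref{BallBox} gives $\mathrm{Cyl}_\zeta(C^{-1}\rho)\subseteq B_{d_V}(\zeta,\rho)\subseteq\mathrm{Cyl}_\zeta(C\rho)$ for small $\rho$, uniformly on the patch. To pass to \eqref{eq:50} with $\eta=2$, observe that $|\Pi_x^V[y-x]|+|\Pi_x^W[y-x]|^{1/2}$ lies between $s_0$ and $2s_0$, where $s_0:=\max\{|\Pi_x^V[y-x]|,|\Pi_x^W[y-x]|^{1/2}\}$ is precisely the least $s$ with $y\in\mathrm{Cyl}_x(s)$; applying the ball/cylinder sandwich with $\rho=d_V(x,y)$ yields inequality (A), and with $\rho$ a suitable multiple of the left-hand side yields (B), after shrinking $r_{30}$ so that the radii involved stay below the threshold of validity. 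Finally, $d_{V,2}$ is itself $2$-squeezed — this is the endpoint $\eta=2$ of Proposition~\ref{prop:loc:equiv}, whose proof carries over (equivalently, one checks directly that $B_{d_{V,2}}(x,\rho)$ is sandwiched between the same cylinders, using the straight segment $\sigma(t)=x+t(y-x)$ for the upper inclusion and the derived-set argument behind \eqref{eq:claim} for the lower one) — so the local equivalence of $d_V$ and $d_{V,2}$ follows immediately from Proposition~\ref{prop:equiv}.

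I expect the main obstacle to be the uniformity in $\zeta$ in the box/cylinder comparison: the error in \eqref{eq:10} must be bounded by $O(|t|^2)$ with a constant independent of $\zeta$ across the patch, the inversion of $\Exp_\zeta$ near $0$ must be carried out with uniform estimates, and one must keep scrupulous track of whether each term is controlled by $\rho$ or by $\rho^2$, so that the quadratic error is absorbed into the linear horizontal term rather than dominating it. Once that comparison is in place, the remaining steps are bookkeeping with the Ball--Box inclusions.
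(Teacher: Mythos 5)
Your proof is correct and rests on the same two ingredients as the paper's argument — the Ball--Box theorem and the second-order expansion \eqref{eq:10} of $\Exp_\zeta$ — but the organization is genuinely different and, in one respect, cleaner. The paper estimates $|\Pi_x^V[y-x]|$ and $|\Pi_x^W[y-x]|$ directly from $d_V(x,y)$ without introducing the cylinders: for the upper bound it uses the box inclusion of Ball--Box to write $y=\Exp_x(t)$ with $|t|\lesssim d_V(x,y)$ and pushes that through \eqref{eq:10}; for the lower bound it uses the annulus $t\in Q(2\oldC{C:1}d_V)\setminus Q(d_V/2\oldC{C:1})$ and splits into two cases depending on whether some $|t_j|$ with $j\le k$ is bounded below (Case~1, giving a lower bound on the $V$-component) or all $|t_j|$ with $j\le k$ are small, forcing some $j>k$ to saturate (Case~2, giving a lower bound on the $W$-component). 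You bypass that case analysis entirely: instead of asking which coordinate of $t$ is large, you invert $\Exp_\zeta$ directly on the cylinder $\mathrm{Cyl}_\zeta(C^{-1}\rho)$ to place $t$ in $Q(\rho)$, and the lower inequality (B) then drops out of one application of Ball--Box. The bookkeeping you flag — that the error $O(|t|^2)$ in \eqref{eq:10} must be absorbed into the $\rho^2$ bound for the $W$-component — does work out, precisely because $|t|\lesssim C^{-1}\rho$ once $y$ is in a cylinder of radius $C^{-1}\rho$, so the error scales as $C^{-2}\rho^2$ and is controllable by enlarging $C$; you should say explicitly that the absorption is into the $W$-bound, not the ``linear horizontal term.''

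One small caution on the last step: Proposition~\ref{prop:loc:equiv} is stated only for $\eta\in[1,2)$, with $\oldr{r:20}$ involving the factor $(36C(R))^{-1/(2-\eta)}$ that degenerates at $\eta=2$. You are right that the argument itself does not break — the estimate $|\Pi_x^W[y-x]|\le d_{V,\eta}(x,y)^\eta+C(R)d_{V,\eta}(x,y)^2\le(1+C(R))d_{V,\eta}(x,y)^\eta$ holds for $\eta=2$ as soon as $d_{V,\eta}(x,y)\le 1$, so the threshold $\oldr{r:20}$ can simply be replaced by a constant independent of $2-\eta$ — but since the paper does not state the endpoint case, you should either carry out the direct cylinder sandwich for $B_{d_{V,2}}$ (your parenthetical alternative) or explicitly note the modified choice of $\oldr{r:20}$ rather than asserting that the proof ``carries over'' without comment. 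With that caveat addressed, your route through the explicit box/cylinder/ball sandwich is a valid and somewhat more transparent proof of the proposition.
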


\begin{proof}
Let \(R>0\), fix \(\zeta\in U(0,R)\), and consider a compact set \(K\Subset U_\zeta\), where \(U_\zeta\) is the open neighbourhood introduced in \S\ref{par:BBT}. Suppose that 
$$
x\in K\cap U(0,R) \quad \text{and} \quad y\in B_{d_V}(x,\min\{\oldr{r:4},\oldr{r:7},1\})\cap U(0,R).
$$
For notational convenience, define
\[
\begin{aligned}
M_1(\zeta)&:= \max_{\substack{1\le i\le n\\ z\in U_\zeta}} |X_i(z)|,\quad
M_2(\zeta) := \max_{z\in U_\zeta} \|\mathrm{d}^2\Exp_\zeta\|,\\[1mm]
m_V(\zeta)&:= \min_{\substack{v\in\mathbb{S}^{k-1}\\ z\in U_\zeta}} \Bigl|\sum_{i=1}^k v_i\,X_i(z)\Bigr|,\quad
m_W(\zeta) := \min_{\substack{v\in\mathbb{S}^{n-k-1}\\ z\in U_\zeta}} \Bigl|\sum_{i=k+1}^n v_i\,X_i(z)\Bigr|.
\end{aligned}
\]
By assumption, these quantities are positive for every \(R>0\). By Theorem~\ref{BallBox}, since
$$
y\in B_{d_V}(x,2d_V(x,y))\setminus B_{d_V}(x,d_V(x,y)/2),
$$
there exists \(t\in Q(2\oldC{C:1}\,d_V(x,y))\setminus Q(d_V(x,y)/2\oldC{C:1})\) such that 
$y=\Exp_x(t)$.
By the definition of the anisotropic box \(Q(\cdot)\), for each \(i=1,\dots,n\) we have
\begin{align}
|t_i|&\le (2\oldC{C:1}\,d_V(x,y))^{\deg X_i},\label{eq:1040}\\[1mm]
\Bigl(\frac{d_V(x,y)}{2\oldC{C:1}}\Bigr)^{\deg X_j}&\le |t_j|\quad\text{for some } j\in\{1,\dots,n\}.\label{eq:1042}
\end{align}

\textbf{Upper bound.} Using the expansion of the exponential, see \eqref{eq:10}, valid when \(d_V(x,y)\le \oldr{r:4}\), we have
$$
y=\Exp_x(t)=x+\sum_{i=1}^n t_iX_i(x)+\mathrm{d}^2\Exp_x(s\,t)[t,t]
$$
for some \(s\in[0,1]\). Hence, by the triangle inequality,
\begin{equation}\label{eq:1041}
\begin{split}
|\Pi_x^V[y-x]| &\le \sum_{i=1}^k |t_i|\,|X_i(x)| + \Bigl|\Pi_x^V\Bigl(\mathrm{d}^2\Exp_x(s\,t)[t,t]\Bigr)\Bigr|\\[1mm]
&\le 2k\,\oldC{C:1}\,M_1(\zeta)\,d_V(x,y)+16n\,\oldC{C:1}^4\,M_2(\zeta)\,d_V(x,y)^2\\[1mm]
&\le 32n\,\oldC{C:1}^4\bigl(M_1(\zeta)+M_2(\zeta)\bigr)d_V(x,y).
\end{split}
\end{equation}
Similarly, one obtains
\begin{equation}\label{eq:1050}
\begin{split}
|\Pi_x^W[y-x]| &\le \sum_{i=k+1}^n |t_i|\,|X_i(x)| + \Bigl|\Pi_x^W\Bigl(\mathrm{d}^2\Exp_x(s\,t)[t,t]\Bigr)\Bigr|\\[1mm]
&\le 4(n-k)\,\oldC{C:1}^2\,M_1(\zeta)\,d_V(x,y)^2+16n\,\oldC{C:1}^4\,M_2(\zeta)\,d_V(x,y)^2\\[1mm]
&\le 32n\,\oldC{C:1}^4\bigl(M_1(\zeta)+M_2(\zeta)\bigr)d_V(x,y)^2.
\end{split}
\end{equation}
Thus, the anisotropic norm
$$
|\Pi_x^V[y-x]|+|\Pi_x^W[y-x]|^{1/2}
$$
is bounded by a constant times \(d_V(x,y)\), which gives the desired upper bound.

\medskip

\textbf{Lower bound.} Since \(d_V(x,y)<1\), one may estimate 
$$
|t|^2\le 4k\,\oldC{C:1}^2\,d_V(x,y)^2+16(n-k)\,\oldC{C:1}^4\,d_V(x,y)^4\le 16n\,\oldC{C:1}^4\,d_V(x,y)^2.
$$
Define
$$
L:=\frac{2\bigl(m_V(\zeta)+k\,M_2(\zeta)\bigr)}{m_W(\zeta)}
$$
and assume that
\begin{equation}
d_V(x,y)\le \frac{m_V(\zeta)}{128\,n\,\oldC{C:1}^6\,M_2(\zeta)\,L}.
    \label{bounddvxy}
\end{equation}
We now distinguish two cases.

\medskip

\textbf{Case 1.} Suppose there exists some \(j\in\{1,\dots,k\}\) with
$$
|t_j|\ge \frac{d_V(x,y)}{2\oldC{C:1}L}.
$$
Then, using the expansion \eqref{eq:10}, we have
\begin{equation}\label{eq:A1}
|\Pi_x^V[y-x]|\ge m_V(\zeta)|t|-M_2(\zeta)|t|^2
\ge \frac{m_V(\zeta)d_V(x,y)}{4\oldC{C:1}L},
\end{equation}
where the first inequality above comes from the definition of $m_V(\zeta)$ and $M_2(\zeta)$ and the last one follows from the choice of $t$ and on \eqref{bounddvxy}.
\medskip

\textbf{Case 2.} If 
$$
|t_i|<\frac{d_V(x,y)}{2\oldC{C:1}L}\quad\text{for all }i=1,\dots,k,
$$
then one can refine the estimate for \(|t|^2\) to obtain
$$
|t|^2< \frac{k\,d_V(x,y)^2}{4\oldC{C:1}^2L}+16(n-k)\,\oldC{C:1}^4\,d_V(x,y)^4
\le \frac{m_W(\zeta)d_V(x,y)^2}{8\oldC{C:1}^2M_2(\zeta)}.
$$
Moreover, by \eqref{eq:1042} there is some \(j\in\{k+1,\dots,n\}\) such that
$$
\Bigl(\frac{d_V(x,y)}{2\oldC{C:1}}\Bigr)^2\le |t_j|.
$$
Thus, using \eqref{eq:10} for the \(W\)-component we infer
\begin{equation}\label{eq:A2}
|\Pi_x^W[y-x]|\ge m_W(\zeta)|t|-M_2(\zeta)|t|^2
\ge \frac{m_W(\zeta)d_V(x,y)^2}{8\oldC{C:1}^2}.
\end{equation}
Taking square roots in \eqref{eq:A2} gives
$$
|\Pi_x^W[y-x]|^{1/2}\ge \Bigl(\frac{m_W(\zeta)}{8\oldC{C:1}^2}\Bigr)^{1/2}d_V(x,y).
$$
Combining the estimates from Cases 1 and 2 shows that
$$
|\Pi_x^V[y-x]|+|\Pi_x^W[y-x]|^{1/2}\ge \frac{1}{1+C}\,d_V(x,y)
$$
for a constant \(C\) (depending on \(\zeta\) and the above quantities). 
A standard covering argument then shows that these local estimates yield the local equivalence of \(d_V\) and \(d_{V,2}\) on any compact set and completes the proof.
\end{proof}

\begin{proposition}\label{limit}
Let \(\eta_0\in[1,2]\). Then, for every \(x,y\in\R^n\),
\[
\lim_{\eta\to\eta_0} d_{V,\eta}(x,y) = d_{V,\eta_0}(x,y).
\]
\end{proposition}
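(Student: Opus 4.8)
The plan is to prove the stronger statement that, for fixed $x,y\in\R^n$, the map $\eta\mapsto d_{V,\eta}(x,y)$ is Lipschitz on $[1,2]$ with a constant depending only on $x$ and $y$; continuity at any $\eta_0\in[1,2]$ follows immediately. The heart of the argument is a pointwise comparison of the anisotropic lengths $\ell_\eta$ and $\ell_{\eta_0}$ along a single curve, which becomes possible once one controls the size of $|\Pi^W_{\gamma(t)}[v]|$ along near-optimal curves.

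\textbf{A priori bound and confinement of the relevant arguments.} First I would test the definition of $d_{V,\eta}$ with the straight segment $\sigma(t):=x+t(y-x)$, whose derived set is the singleton $\{y-x\}$ at every $t$, so that $\ell_\eta(\sigma)=\sup_{t\in[0,1]}\bigl(|\Pi^V_{\sigma(t)}[y-x]|+|\Pi^W_{\sigma(t)}[y-x]|^{1/\eta}\bigr)$. Since $z\mapsto\Pi^V_z$ and $z\mapsto\Pi^W_z$ are continuous, $\sigma([0,1])$ is compact, and $s^{1/\eta}\le\max(s,1)$ for $\eta\ge1$, there is a constant $D=D(x,y)\ge1$ with $d_{V,\eta}(x,y)\le\ell_\eta(\sigma)\le D$ for \emph{every} $\eta\in[1,2]$. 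Consequently, if $\gamma\in\Lip([0,1],\R^n)$ joins $x$ to $y$ and $\ell_\eta(\gamma)\le D+1$, then directly from the definition of $\ell_\eta$ one gets $|\Pi^W_{\gamma(t)}[v]|^{1/\eta}\le\ell_\eta(\gamma)$, hence $|\Pi^W_{\gamma(t)}[v]|\le(D+1)^2=:S$, for all $t\in[0,1]$ and all $v\in\mathcal{D}\gamma(t)$ (note that $\mathcal{D}\gamma(t)$ itself does not depend on $\eta$). Finally, although $s\mapsto s^{1/\eta}$ is not Lipschitz in $s$ near $0$, it is Lipschitz in $\eta$ uniformly for $s\in[0,S]$: for $s\in(0,S]$ one has $\bigl|\tfrac{d}{d\eta}s^{1/\eta}\bigr|=s^{1/\eta}|\ln s|/\eta^2\le M$, where $M:=\max\{2e^{-1},\,S\ln S\}<\infty$ (the first term bounds $s^a|\ln s|$ on $(0,1]$ for $a\in[1/2,1]$, the second $s|\ln s|$ on $[1,S]$), and the estimate is trivial at $s=0$; hence $|s^{1/\eta}-s^{1/\eta_0}|\le M|\eta-\eta_0|$ for every $s\in[0,S]$ and every $\eta,\eta_0\in[1,2]$.

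\textbf{Conclusion.} Fix $\epsilon>0$ and let $\gamma$ join $x$ to $y$ with $\ell_{\eta'}(\gamma)\le d_{V,\eta'}(x,y)+\epsilon\le D+1$, where $\eta'$ is either $\eta$ or $\eta_0$. Combining the two facts above, for every $t\in[0,1]$ and every $v\in\mathcal{D}\gamma(t)$ the quantities $|\Pi^V_{\gamma(t)}[v]|+|\Pi^W_{\gamma(t)}[v]|^{1/\eta}$ and $|\Pi^V_{\gamma(t)}[v]|+|\Pi^W_{\gamma(t)}[v]|^{1/\eta_0}$ differ by at most $M|\eta-\eta_0|$, so taking the supremum over $t$ and the maximum over $v$ gives $|\ell_\eta(\gamma)-\ell_{\eta_0}(\gamma)|\le M|\eta-\eta_0|$. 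Using this with $\eta'=\eta$ yields $d_{V,\eta_0}(x,y)\le\ell_{\eta_0}(\gamma)\le\ell_\eta(\gamma)+M|\eta-\eta_0|\le d_{V,\eta}(x,y)+\epsilon+M|\eta-\eta_0|$, and with $\eta'=\eta_0$ the symmetric inequality; letting $\epsilon\to0$ I conclude $|d_{V,\eta}(x,y)-d_{V,\eta_0}(x,y)|\le M|\eta-\eta_0|$, which gives the asserted limit.

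The main obstacle is precisely the non-Lipschitz behaviour of $s\mapsto s^{1/\eta}$ at $s=0$, which rules out a crude estimate of $\ell_\eta(\gamma)-\ell_{\eta_0}(\gamma)$. The two points that circumvent it are: (i) the a priori bound $d_{V,\eta}(x,y)\le D$ together with the trivial inequality $|\Pi^W_{\gamma(t)}[v]|\le\ell_\eta(\gamma)^\eta$, which confines all relevant arguments to one fixed bounded interval $[0,S]$ simultaneously for all $\eta\in[1,2]$; and (ii) differentiating in $\eta$ rather than in $s$, where the logarithmic factor is harmless because $s^{1/\eta}|\ln s|\to0$ as $s\to0^+$ uniformly for $\eta\in[1,2]$. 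Everything else is routine.
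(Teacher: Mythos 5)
Your proof is correct, and it takes a genuinely different route from the paper's. Both arguments start from the same two anchoring observations: the uniform a priori bound $d_{V,\eta}(x,y)\le D(x,y)$ obtained by testing with the straight segment, and the consequent confinement of $|\Pi^W_{\gamma(t)}[v]|$ to a fixed interval $[0,S]$ along near-optimal curves. From there the two proofs diverge. The paper compares $\ell_\eta$ and $\ell_{\eta'}$ \emph{multiplicatively}, factoring $s^{1/\eta'}=s^{1/\eta}\,\bigl(s^{1/\eta}\bigr)^{(\eta-\eta')/\eta'}$ and absorbing the second factor into a constant $C(x,y)^{|\eta-\eta'|/\eta'}$, which yields a two-sided estimate of the form $C^{-\alpha}\,d_{V,\eta_0}\le d_{V,\eta}\le C^{\alpha}\,d_{V,\eta_0}$ and concludes by a squeeze. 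You instead compare them \emph{additively}, differentiating $\eta\mapsto s^{1/\eta}$ and bounding $s^{1/\eta}|\ln s|$ uniformly on $[0,S]$; this gives the pointwise estimate $|\ell_\eta(\gamma)-\ell_{\eta_0}(\gamma)|\le M|\eta-\eta_0|$ for near-optimal $\gamma$ and hence the stronger conclusion that $\eta\mapsto d_{V,\eta}(x,y)$ is Lipschitz. Your route also handles the delicate regime $|\Pi^W|\to 0$ more transparently: the multiplicative ratio $s^{1/\eta'}/s^{1/\eta}$ blows up as $s\to 0^+$ when $\eta<\eta'$ (the exponent $(\eta-\eta')/\eta'$ becomes negative), a point the paper's inequality~\eqref{eq:cont1} does not explicitly address, whereas the additive difference $s^{1/\eta}-s^{1/\eta_0}$ vanishes there, so your logarithmic bound is harmless. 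In short, your approach is both a bit more careful and a bit more informative (it produces an explicit modulus of continuity in $\eta$), at essentially no extra cost.
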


\begin{proof}
By the definition of \(d_{V,\eta}\), for every \(x,y\in\R^n\) we have
\[
d_{V,\eta}(x,y)\le |x-y| + |x-y|^{1/\eta}.
\]
Since \(1/\eta\in[1/2,1]\) for \(\eta\in[1,2]\), it follows that
\begin{equation}\label{eq:2002}
d_{V,\eta}(x,y)\le 2|x-y| + |x-y|^{1/2}.
\end{equation}
For notational convenience, set
\[
C(x,y):=2\max\Bigl\{1,\;2|x-y| + |x-y|^{1/2}\Bigr\}.
\]

Let \(\epsilon,\delta>0\). For each \(\eta\in[\eta_0-\epsilon,\eta_0+\epsilon]\), choose a curve \(\gamma_\eta\in\Lip([0,1],\R^n)\) such that
\[
\ell_\eta(\gamma_\eta)\le d_{V,\eta}(x,y) + \delta.
\]
Note that by \eqref{eq:2002} we have that \(d_{V,\eta}(x,y)\le C(x,y)\).

Now, let \(\eta,\eta'\in[\eta_0-\epsilon,\eta_0+\epsilon]\). Since \(\gamma_\eta\) is admissible for \(d_{V,\eta'}\), we have
\[
d_{V,\eta'}(x,y)\le \ell_{\eta'}(\gamma_\eta).
\]
For every \(t\in[0,1]\) and any \(v\in\mathcal{D}\gamma_\eta(t)\), we can write
\[
|\Pi_{\gamma_\eta(t)}^W[v]|^{1/\eta'} 
= |\Pi_{\gamma_\eta(t)}^W[v]|^{1/\eta} \, |\Pi_{\gamma_\eta(t)}^W[v]|^{1/\eta' - 1/\eta}=|\Pi_{\gamma_\eta(t)}^W[v]|^{1/\eta} \, (|\Pi_{\gamma_\eta(t)}^W[v]|^{1/\eta})^\frac{\eta-\eta'}{\eta'}.
\]
Notice that 
\begin{equation}
    |\Pi_{\gamma_\eta(t)}^W[v]|^{1/\eta}\leq \ell_\eta(\gamma_\eta)\leq d_{V,\eta}(x,y)+\delta \leq C(x,y)+\delta,
    \nonumber
\end{equation}
where in the last inequality we used \eqref{eq:2002}. Thanks to the above computation, we deduce that
\begin{equation}
    \label{eq:cont1}
    \begin{split}
&\qquad\qquad|\Pi_{\gamma_\eta(t)}^W[v]|^{1/\eta'} \le (C(x,y)+\delta)^{|\eta-\eta'|/\eta'} \, |\Pi_{\gamma_\eta(t)}^W[v]|^{1/\eta}\\
\leq& (C(x,y)+\delta)^{|\eta-\eta'|/\eta'} \,\ell_\eta(\gamma_\eta)\leq (C(x,y)+\delta)^{|\eta-\eta'|/\eta'}(d_{V,\eta}(x,y)+\delta).
    \end{split}
\end{equation}
Since \(\delta>0\) is arbitrary, we obtain
\[
d_{V,\eta'}(x,y) \le C(x,y)^{|\eta-\eta'|/\eta'} \, d_{V,\eta}(x,y).
\]
Exchanging the roles of \(\eta\) and \(\eta'\) yields
\[
d_{V,\eta}(x,y) \le C(x,y)^{|\eta-\eta'|/\eta} \, d_{V,\eta'}(x,y).
\]
In particular
\[
C(x,y)^{-|\eta-\eta_0|/\eta_0} \, d_{V,\eta_0}(x,y) \le d_{V,\eta}(x,y) \le C(x,y)^{|\eta-\eta_0|/\eta} \, d_{V,\eta_0}(x,y).
\]
Since the exponents \(|\eta-\eta_0|/\eta\) and \(|\eta-\eta_0|/\eta_0\) tend to zero as \(\eta\to\eta_0\), the result follows by the squeeze theorem.
\end{proof}

\end{parag}

\begin{parag}[Rectifiability properties of squeezed metrics]

Suppose $\eta\in[1,2]$ and let $d$ be an $\eta$-squeezed metric on $V$. Since the metrics $d$ and $d_{V,\eta}$ are locally equivalent as they are supposed to be both $\eta$-squeezed, we have that for any $\alpha\in[0,\infty)$ the measures $\Haus^\alpha_d$ and $\Haus^\alpha_{d_{V,\eta}}$ are mutually absolutely continuous.

\begin{proposition}\label{UIAM}
Suppose $
f:K\Subset\R^m\to (\R^n,d_{V,\eta})
$
is an \(L\)-Lipschitz map. Then \(f\) is locally Lipschitz with respect to the Euclidean metric and
\[
\mathscr{H}^m_{d_{V,\eta}}\Bigl(\bigl\{x\in K:\operatorname{rk}(Df(x))<m\bigr\}\Bigr)=0.
\]
\end{proposition}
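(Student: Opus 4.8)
The plan is to first establish the Euclidean local Lipschitz property and then use it to run an area-formula argument via Kirchheim's framework. By Proposition \ref{prop:anis}, near any point $x_0 \in \R^n$ the $\eta$-box distance $d_{V,\eta}$ satisfies $\tfrac{2}{3}|x-y| \le d_{V,\eta}(x,y)$ on a small metric ball, so an $L$-Lipschitz map $f\colon K \to (\R^n,d_{V,\eta})$ satisfies $|f(z)-f(w)| \le \tfrac{3}{2} d_{V,\eta}(f(z),f(w)) \le \tfrac{3}{2} L|z-w|$ whenever $f(z),f(w)$ lie in such a small ball; covering $f(K)$ by finitely many of these balls (using compactness) shows $f$ is locally Euclidean Lipschitz on $K$, hence differentiable $\Leb^m$-a.e.\ by Rademacher.

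Next I would identify $(\R^n,d_{V,\eta})$ with a subset of a Banach space so that Kirchheim's metric area formula (Theorem \ref{areaformula}) and, more precisely, Corollary \ref{cor:rk} apply: by the Kuratowski embedding (or the extension result cited in the proof of Corollary \ref{cor:rk}), $f$ extends to a Lipschitz map into a Banach space $(X,\|\cdot\|)$ with $\|\cdot\|$ inducing $d_{V,\eta}$ on the image. The key point is then to show that at every density point $x$ of $K$ at which $f$ is Euclidean-differentiable and the metric differential $MD(f,x)$ exists and is a seminorm, one has the implication
\[
\operatorname{rk}(Df(x)) < m \;\Longrightarrow\; \exists\, u \in \mathbb{S}^{m-1}\ \text{with}\ MD(f,x)[u]=0.
\]
Indeed, if $\operatorname{rk}(Df(x))<m$ there is a unit vector $u$ with $Df(x)[u]=0$, i.e.\ $|f(x+ru)-f(x)| = o(r)$; combining this with the upper bound $d_{V,\eta}(p,q) \le 2|p-q|^{1/\eta}$ from Proposition \ref{prop:anis} (valid for $p,q$ close) gives $d_{V,\eta}(f(x+ru),f(x)) \le 2|f(x+ru)-f(x)|^{1/\eta} = o(r^{1/\eta})$. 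Since $\eta < 2$ this is not yet $o(r)$, so a naive estimate fails and one must exploit the full squeezed structure.

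The main obstacle is precisely this last quantitative step: I expect to need the two-sided inequality of Proposition \ref{prop:loc:equiv}, namely $d_{V,\eta}(f(x+ru),f(x)) \asymp |\Pi^V_{f(x)}[f(x+ru)-f(x)]| + |\Pi^W_{f(x)}[f(x+ru)-f(x)]|^{1/\eta}$. Write $w(r) := f(x+ru)-f(x) = o(r)$ (Euclidean). Then $|\Pi^V_{f(x)} w(r)| = o(r)$ trivially, and $|\Pi^W_{f(x)} w(r)|^{1/\eta} = o(r^{1/\eta})$, which is the bad term. To kill it one uses that $f$ is $L$-Lipschitz for $d_{V,\eta}$ itself, hence by the lower bound in Proposition \ref{prop:loc:equiv}, $|\Pi^W_{f(x)}[f(x+su)-f(x)]|^{1/\eta} \le (1+C)L|s|$ for all small $s$, i.e.\ $|\Pi^W_{f(x)} w(s)| \le C' |s|^{\eta}$. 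Interpolating: $|\Pi^W_{f(x)} w(r)| \le \min\{|w(r)|,\, C'r^{\eta}\}$ and since $|w(r)| = o(r)$, for any $\epsilon$ we get $|\Pi^W_{f(x)} w(r)| \le \epsilon r$ for $r$ small, using $\eta \ge 1$; actually the cleanest route is $|\Pi^W_{f(x)} w(r)| \le |w(r)| = o(r)$, so $|\Pi^W_{f(x)} w(r)|^{1/\eta} = (o(r))^{1/\eta}$, which still only gives $o(r^{1/\eta})$. The correct interpolation is: $|\Pi^W_{f(x)} w(r)|^{1/\eta} = |\Pi^W_{f(x)} w(r)|^{1/\eta - 1}\cdot |\Pi^W_{f(x)} w(r)| \le (C' r^\eta)^{(1-\eta)/\eta} \cdot o(r) = r^{1-\eta}\cdot o(r) \cdot C'' = o(r)$ since $(1/\eta-1)<0$ pairs with the upper Lipschitz bound $|\Pi^W_{f(x)} w(r)| \le C' r^\eta \le C' r^{\eta}$ — wait, one needs a \emph{lower} exponent there; instead bound $|\Pi^W_{f(x)} w(r)|^{1/\eta-1} \le c\, r^{\eta(1/\eta-1)} = c\,r^{1-\eta}$ using $|\Pi^W_{f(x)} w(r)| \le C' r^\eta$ and $1/\eta - 1 \le 0$. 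Multiplying by $|\Pi^W_{f(x)} w(r)| = o(r)$ yields $o(r^{2-\eta}) = o(r)$ because $\eta < 2$. Thus $d_{V,\eta}(f(x+ru),f(x)) = o(r)$, i.e.\ $MD(f,x)[u]=0$, so $\mathfrak{J}f(x)=0$ on the set where $\operatorname{rk}(Df) < m$. Applying Corollary \ref{cor:rk} (with $\mathcal N \supseteq \{\operatorname{rk}(Df)<m\}$ up to a null set) gives $\mathscr H^m_{d_{V,\eta}}(f(\{\operatorname{rk}(Df)<m\})) = 0$, which is the claim. (The case $\eta=2$ is excluded here and is exactly where this argument breaks — consistent with the dichotomy in the Proposition on squeezed metrics.)
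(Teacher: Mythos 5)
Your framework matches the paper's: both establish Euclidean local Lipschitzianity via Proposition~\ref{prop:anis}, reduce via Kirchheim's metric Rademacher theorem and Corollary~\ref{cor:rk} to showing that $\operatorname{rk}(Df(x))<m$ forces $MD(f,x)[u]=0$ for some $u$, and identify the term $|\Pi^W_{f(x)}[f(x+ru)-f(x)]|^{1/\eta}$ as the obstacle. The problem is the final interpolation step, which is wrong in two places and cannot be repaired pointwise. First, from $|\Pi^W w(r)|\le C'r^\eta$ and $1/\eta-1<0$ you inferred $|\Pi^W w(r)|^{1/\eta-1}\le c\,r^{1-\eta}$; but raising both sides of $0<a\le b$ to a \emph{negative} power reverses the inequality, so what you actually get is $|\Pi^W w(r)|^{1/\eta-1}\ge c\,r^{1-\eta}$, a useless lower bound. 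Second, even granting your inequality, multiplying by $o(r)$ gives $o(r^{2-\eta})$, and since $1<\eta<2$ one has $0<2-\eta<1$, so $o(r^{2-\eta})$ is \emph{not} $o(r)$: $o(r^{2-\eta})/r$ equals $o(1)\cdot r^{1-\eta}$, an indeterminate $0\cdot\infty$. More fundamentally, no pointwise interpolation from the two bounds $|\Pi^W w(r)|\le C'r^\eta$ and $|\Pi^W w(r)|=o(r)$ can give $|\Pi^W w(r)|^{1/\eta}=o(r)$: the choice $|\Pi^W w(r)|=r^\eta$ satisfies both (it is $o(r)$ because $\eta>1$) but yields $|\Pi^W w(r)|^{1/\eta}=r$, not $o(r)$.

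What the paper does instead is genuinely different at this step and cannot be bypassed. It fixes $x$ to be not just a differentiability point but also a $\Haus^1$-density point of $K\cap(x+\ell_w)$, then partitions $[0,t]$ by a finite $\epsilon^3 t$-dense subset $\{t_1,\dots,t_M\}$ of $\mathcal K(t):=\{s\in[0,t]:x+sw\in K\}$ (the density hypothesis controls the total measure of the gaps) and telescopes $\Pi^W_x[f(x+tw)-f(x)]$ along this partition, applying the Lipschitz bound \eqref{eq:stimeschiacc} between \emph{consecutive} partition points rather than only between the endpoints, together with the modulus-of-continuity estimate \eqref{eq:41} on $x\mapsto\Pi^W_x$. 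The gain comes from the elementary superadditivity estimate $\sum_j|t_{j+1}-t_j|^\eta\le(\max_j|t_{j+1}-t_j|)^{\eta-1}\sum_j|t_{j+1}-t_j|\lesssim\epsilon^{\eta-1}t^\eta$, which produces the factor $\epsilon^{\eta-1}\to 0$. This is precisely the mechanism your pointwise argument lacks: the Lipschitz condition must be exploited along a fine chain inside $K$, not just at the two endpoints $x$ and $x+ru$.
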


\begin{proof}
Since \(f\) is \(L\)-Lipschitz with respect to \(d_{V,\eta}\), Lemma~\ref{prop:anis} shows that \(f\) is locally Lipschitz in the Euclidean sense.  By Euclidean Rademacher’s theorem we know that $f$ is differentiable $\Leb^m$-almost everywhere.
% the Euclidean differential \(Df\) exists.
% Suppose $D$ is a dense and countable subset of the unit sphere in $\R^m$ and let $K_D$ be the set of those $x\in K$ at which $Df(x)$ exists and for which $x$ is a density point of $\Haus^1\llcorner K\cap x+\Span(v)$ for every $v\in D$. Thanks to Tonelli's and Rademacher's theorems it is immediate to see that $\Leb^m(K\setminus K_D)=0$. 
Fix a point \(x\in K\) at which $f$ is differentiable and suppose there exists \(w\in\mathbb{S}^{m-1}\) with
\[
Df(x)[w]=0.
\]
We claim that for $\Leb^m$-almost every $x\in K$ we have
\begin{equation}\label{eq:1012}
\lim_{\substack{t\to 0 \\ x+tw\in K}} \frac{d_{V,\eta}\bigl(f(x+tw),f(x)\bigr)}{t}=0.
\end{equation}
Fix a point $x\in K$ and given $w\in \mathbb{S}^{m-1}$ we let $t\in \R$ be such that 
$x+tw\in K$. This implies by Proposition \ref{prop:loc:equiv} that 
\begin{equation}
    \begin{split}
    \frac{d_{V,\eta}\bigl(f(x+tw),f(x)\bigr)}{t}\leq 2\frac{|\Pi_x^V[f(x+tw)-f(x)]|}{t}+2\Big(\frac{|\Pi_x^W[f(x+tw)-f(x)]|}{t^\eta}\Big)^{1/\eta}.
        \nonumber
    \end{split}
\end{equation}
 Note that by Lebesgue's differentiation theorem and by Tonelli's theorem, we know that for $\Leb^m$-almost every $x\in K$ we have that $x$ is a $\Haus^1$-density point in $K\cap x+\mathrm{span}(w)$.
Therefore, from now on $x\in K$ will be a fixed point such that $f$ is differentiable at $x$ and 
$$\lim_{r\to 0}\frac{\Haus^1(K\cap x+\ell_w\cap B(x,r))}{2r}=1,$$
where $\ell_w$ denotes the line spanned by $w$. 
Thanks to our choice of $x$, for every $j\in \N$ there exists $\delta=\delta(x,j)>0$ such that whenever $\lvert t\rvert\leq \delta$ is such that $x+tw\in K$, we have
$$\frac{\lvert f(x+tw)-f(x)-Df(x)[tw]\rvert }{\lvert t\rvert}\leq \frac{1}{j}.$$
This implies that if $\lvert t\rvert\leq \delta$, then
\begin{equation}
    \begin{split}
\frac{d_{V,\eta}\bigl(f(x+tw),f(x)\bigr)}{t}\leq 2j^{-1}+2\Big(\frac{|\Pi_x^W[f(x+tw)-f(x)]|}{t^\eta}\Big)^{1/\eta}. 
    \end{split}
\end{equation}
For every $x\in K$ and $t>0$ such that $x+tw\in K$ we let
\begin{equation}
    \begin{split}
    \mathrm{II}_w(x,t):=\frac{|\Pi_x^W[f(x+tw)-f(x)]|}{t^\eta}.
    \nonumber
    \end{split}
\end{equation}
In order to conclude the proof of \eqref{eq:1012}, thanks to the arbitrariness of $j$ we just need to prove that 
\begin{equation}
\lim_{\substack{t\to 0 \\ x+tw\in K}} \mathrm{II}_w(x,t)=0.
    \label{IIgoto0}
\end{equation}
For clarity, we prove \eqref{IIgoto0} for \(t>0\); the case \(t<0\) is analogous. Let \(t>0\). Since \(K\) is compact, the complement of 
\[
\mathcal{K}(t):=\{s\in[0,t]:x+sw\in K\}
\]
is a disjoint union of countably many open intervals \(\{(a_j,b_j)\}_{j\in\N}\). Moreover, by our choice of \(x\), for every \(0<\epsilon<1\) there exists \(r(\epsilon)>0\) such that if \(0<t<r(\epsilon)\), then
\[
\sum_{j\in\N}(b_j-a_j)\leq \epsilon t.
\]

Suppose that \(\tau:=\{t_1,\ldots, t_M\}\) is a finite \(\epsilon^3t\)-dense subset of \(\mathcal{K}(t)\). Notice that if \(t\) is small enough, then \eqref{eq:41} together with Proposition \ref{prop:loc:equiv} imply that 
\begin{equation}
    \begin{split}
        \Bigl|\Bigl(\Pi_{x+t_jw}^W-\Pi_{x}^W\Bigr)[f(x+t_{j+1}w)-f(x+t_{j}w)]\Bigr| &\leq C(R)|t_j|\,|f(x+t_{j+1}w)-f(x+t_{j}w)|\\[1mm]
        &\leq  C(R)L|t_j|\,|t_{j+1}-t_{j}|.
        \label{eq:1029}
    \end{split}
\end{equation}
In addition, if \(t\) is chosen small enough, then by Proposition \ref{prop:loc:equiv} and \eqref{eq:41} we have 
\begin{equation}
| \Pi_x^W[f(x+t_{j+1}w)-f(x+t_{j}w)]|\leq  (1+C(R))^\eta L^\eta |t_{j+1}-t_j|^\eta,
    \label{eq:stimeschiacc}
\end{equation}
where \(R\) is the radius given by \eqref{eq:41} relative to \(x\). 

The estimates \eqref{eq:1029} and \eqref{eq:stimeschiacc} allow us to bound \(\mathrm{II}_w(x,t)\) as follows:
\begin{equation}
\begin{split}
    &\mathrm{II}_w(x,t)\leq t^{-\eta}\sum_{j=1}^{M-1}\bigl|\Pi_{x}^W[f(x+t_{j+1}w)-f(x+t_{j}w)]\bigr|\\[1mm]
&\leq t^{-\eta}\sum_{j=1}^{M-1}\Bigl|\Pi_{x+t_jw}^W[f(x+t_{j+1}w)-f(x+t_{j}w)]\Bigr|+C(R)L\,t^{-\eta}\sum_{j=1}^{M-1} t_j|t_{j+1}-t_{j}|\\[1mm]
&\leq (1+C(R))^\eta L^\eta t^{-\eta}\sum_{j=1}^{M-1} |t_{j+1}-t_{j}|^\eta+C(R)L\, \varepsilon\, t^{2-\eta}\\[1mm]
&\leq (1+C(R))^\eta L^\eta \varepsilon^{\eta-1}+C(R)L\, \varepsilon\, t^{2-\eta}.
\nonumber
\end{split}
\end{equation}
Since \(\varepsilon>0\) is arbitrary, this completes the proof of \eqref{IIgoto0} and thus of \eqref{eq:1012}.

To conclude the proof of the proposition, let \(\iota:(\R^n,d_{V,\eta})\hookrightarrow \ell^\infty\) be an isometric embedding of the separable metric space \((\R^n,d_{V,\eta})\) into \(\ell^\infty\). Define \(F:=\iota\circ f\). Then \eqref{eq:1012} shows that if \(\operatorname{rk}(Df(x))<m\) at some \(x\in K\), there exists \(u\in\mathbb{S}^{m-1}\) such that the metric differential \(MD(F,x)[u]=0\). By Theorem~\ref{metricrademacher}, at \(\Leb^m\)-almost every \(x\in K\) the metric differential \(MD(F,x)\) exists and is a seminorm; consequently, Corollary~\ref{cor:rk} implies that
\[
\Haus^m_{\|\cdot\|_\infty}\bigl(F(\mathcal{N})\bigr)=0,
\]
where \(\mathcal{N}=\{x\in K:\operatorname{rk}(Df(x))<m\}\). Since \(\iota\) is an isometry, we deduce that
\[
\Haus^m_{d_{V,\eta}}\bigl(f(\mathcal{N})\bigr)=0.
\]
This completes the proof.
\end{proof}

\begin{proposition}
For any $m>k$ and $\eta\in(1,2]$, the metric space $(\R^n,d_{V,\eta})$ is $(\Haus^m,m)$-purely unrectifiable.
\end{proposition}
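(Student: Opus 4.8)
The plan is to deduce this from the ``deficient-rank'' principle of Proposition~\ref{UIAM}, once we observe that for an $\eta$-squeezed metric with $\eta>1$ a Lipschitz image can never have differential of rank exceeding $k$. So let $f\colon K\to(\R^n,d_{V,\eta})$ be an $L$-Lipschitz map with $K\Subset\R^m$. By the lower bound $\tfrac23|x-y|\le d_{V,\eta}(x,y)$ of Proposition~\ref{prop:anis}, $f$ is locally Lipschitz with respect to the Euclidean distance, hence by Rademacher's theorem it is (Euclidean-)differentiable at $\Leb^m$-almost every point of $K$.

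The key step is to show that $\operatorname{rk}(Df(x))\le k$ at $\Leb^m$-a.e.\ $x\in K$. Fix a Lebesgue density point $x$ of $K$ at which $f$ is differentiable. For $z\in K$ near $x$, the Lipschitz bound $d_{V,\eta}(f(z),f(x))\le L|z-x|$ together with the lower squeezed estimate of Proposition~\ref{prop:loc:equiv}, applied at the base point $f(x)$, give
\[
|\Pi_{f(x)}^W[f(z)-f(x)]|\ \le\ \bigl((1+C(R))\,L\,|z-x|\bigr)^{\eta}\ =\ o(|z-x|)\qquad\text{as }z\to x,
\]
where we used $\eta>1$. Since $f$ is differentiable at $x$, this forces $\Pi_{f(x)}^W\bigl[Df(x)(z-x)\bigr]=o(|z-x|)$ for every $z\in K$ close to $x$; as $x$ is a density point of $K$, the set $K-x$ meets every open cone with vertex at the origin arbitrarily close to $0$, so the linear map $\Pi_{f(x)}^W\circ Df(x)$ vanishes identically. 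Therefore $\operatorname{im}(Df(x))\subseteq V(f(x))$ and $\operatorname{rk}(Df(x))\le\dim V(f(x))=k<m$.

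Consequently the set $\mathcal{N}:=\{x\in K:Df(x)\text{ exists and }\operatorname{rk}(Df(x))<m\}$ has full $\Leb^m$-measure in $K$, and Proposition~\ref{UIAM} yields $\Haus^m_{d_{V,\eta}}\bigl(f(\mathcal N)\bigr)=0$. It remains to treat $Z:=K\setminus\mathcal N$, which is $\Leb^m$-null; here nullity of $f(Z)$ does not follow formally, because $d_{V,\eta}$ is only H\"older-equivalent (not bi-Lipschitz-equivalent) to the Euclidean distance, so instead we fix an isometric embedding $\iota\colon(\R^n,d_{V,\eta})\hookrightarrow\ell^\infty$, extend $\iota\circ f$ to a Lipschitz map $\tilde F\colon\R^m\to\ell^\infty$ (as in the proof of Corollary~\ref{cor:rk}), and apply Kirchheim's area formula, Theorem~\ref{areaformula}, with $A=Z$. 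Since $\tilde F$ is Lipschitz the Jacobian $\mathfrak J\tilde F$ is bounded, so $\int_Z\mathfrak J\tilde F\,d\Leb^m=0$; hence $N(\tilde F\rvert_Z,\cdot)=0$ for $\Haus^m_{\|\cdot\|_\infty}$-a.e.\ point, i.e.\ $\Haus^m_{\|\cdot\|_\infty}\bigl(\tilde F(Z)\bigr)=0$, and therefore $\Haus^m_{d_{V,\eta}}\bigl(f(Z)\bigr)=0$. Summing the two contributions gives $\Haus^m_{d_{V,\eta}}\bigl(f(K)\bigr)=0$ for every compact $K\Subset\R^m$ and every Lipschitz $f$, which is exactly the asserted $(\Haus^m,m)$-pure unrectifiability.

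The conceptual heart — and the only genuinely delicate point — is the rank bound of the second paragraph: it is precisely the combination of the squeezed shape of the balls with the \emph{strict} inequality $\eta>1$ (so that $|z-x|^{\eta}=o(|z-x|)$) that traps the differential inside $V$, and this is what fails at $\eta=1$, where $d_{V,1}$ is bi-Lipschitz to the Euclidean metric and $m$-rectifiable sets abound. The remaining work is bookkeeping: one must resist concluding $\Haus^m_{d_{V,\eta}}(f(K))=0$ ``by soft reasoning'' from Proposition~\ref{UIAM}, since that proposition only controls the image of the deficient-rank set, and the $\Leb^m$-null complement has to be disposed of through the metric area formula rather than through any Lipschitz comparison of the two Hausdorff measures.
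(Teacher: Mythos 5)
Your proof is essentially the paper's argument, and its core (the rank bound, then Proposition~\ref{UIAM}) is carried out correctly. A few remarks.

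Your second paragraph is a more explicit and self-contained version of what the paper merely gestures at (``the same argument used to prove inclusion \eqref{eq:25}''): you use the lower bound of Proposition~\ref{prop:loc:equiv}, i.e.\ the $\eta$-squeezed structure of $d_{V,\eta}$, in place of the Ball--Box estimate that underlies \eqref{eq:25}, and you correctly isolate $\eta>1$ as the hypothesis that makes $|z-x|^\eta=o(|z-x|)$. The density-point argument then correctly upgrades the pointwise estimate on the $W$-component to the linear-algebra statement $\operatorname{im}(Df(x))\subseteq V(f(x))$.

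The one substantive inaccuracy is the claim that nullity of $f(Z)$ ``does not follow formally, because $d_{V,\eta}$ is only H\"older-equivalent\dots to the Euclidean distance.'' This misattributes the potential difficulty. The map $f$ is, by hypothesis, $L$-Lipschitz from the \emph{Euclidean} metric on $K$ into $(\R^n,d_{V,\eta})$; hence for any $A\subseteq K$ one has the elementary bound $\Haus^m_{d_{V,\eta}}\bigl(f(A)\bigr)\le L^m\Haus^m_{\mathrm{eu}}(A)=L^m\Leb^m(A)$, by covering $A$ with small Euclidean balls and noting that their images have $d_{V,\eta}$-diameter controlled by $L$ times the Euclidean radius. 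Since $\Leb^m(Z)=0$, one gets $\Haus^m_{d_{V,\eta}}(f(Z))=0$ directly, with no need for an $\ell^\infty$-embedding or Kirchheim's area formula. (The H\"older-equivalence of $d_{V,\eta}$ with the Euclidean metric on the \emph{target} is irrelevant here, because the comparison runs from the Euclidean domain metric to $d_{V,\eta}$, and that comparison is exactly the Lipschitz hypothesis.) Your route via Theorem~\ref{areaformula} is not wrong, just unnecessarily heavy, and the accompanying explanation of \emph{why} it is needed should be discarded. It is still a useful observation that the paper's proof does not mention the null complement at all; the fix is just much simpler than what you propose.
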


\begin{proof}
Let $f:K\Subset \R^m\to (\R^n,d_{V,\eta})$ be a Lipschitz map. With the same argument used to prove inclusion \eqref{eq:25}, one can show that $\text{im}(Df(x))\subseteq V(f(x))$ for $\Leb^m$-almost every $x\in K$. Since $m>k$, we deduce that $\text{rk}(Df)\leq k<m$ $\Leb^m$-almost everywhere on $K$ and thus Proposition \ref{UIAM} implies that $\Haus^m_{d_{V,\eta}}(f(K))=0$. The arbitrariness of $f$ proves the claim.
\end{proof}

\begin{proposition}[{\cite{AMMFrobenius3}}]\label{lusin}
Let $\vartheta,\varepsilon>0$ and suppose that 
\[
F:\Omega\times \mathbb{R}^{n-k}\to \mathbb{R}^{k\times (n-k)}
\]
is a locally Lipschitz map. Then there exist a compact set $\mathfrak{C}\subset \Omega$ and a function 
$u:\Omega\to \mathbb{R}^{n-k}$
of class $C^{1,\alpha}(\Omega)$ for every $\alpha\in (0,1)$, such that
\begin{enumerate}[label=(\roman*)]
    \item $\mathscr{L}^k\bigl(\Omega\setminus \mathfrak{C}\bigr)\le \varepsilon\, \mathscr{L}^k(\Omega)$;
    \item $\|u\|_{\infty}\le \vartheta$;
    \item $Du(x)=F\bigl(x,u(x)\bigr)$ for every $x\in \mathfrak{C}$.
\end{enumerate}
\end{proposition}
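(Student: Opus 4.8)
The plan is to run the iterative ``wiggling'' construction of the first-named author \cite{zbMATH00021945}, in the sharp $C^{1,\alpha}$ form due to Z.~Balogh \cite{Balogh2003SizeGradient}, adapted to the present implicit and parametric setting (the full argument is carried out in \cite{AMMFrobenius3}). Since the map $u$ we seek will satisfy $\|u\|_\infty\le\vartheta$, only the values of $F$ on $\Omega\times\overline{B(0,\vartheta)}$ are relevant, and there $F$ is Lipschitz; hence, after replacing $F$ by a bounded globally Lipschitz map agreeing with it on that set, and localizing $\Omega$ to a bounded cube by a standard exhaustion, we may assume that $\Omega$ is a bounded cube and $F$ is bounded and globally Lipschitz. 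Note that no involutivity assumption is imposed on $F$: the content is that the generally non-integrable relation $Du=F(\cdot,u)$ is solved, not everywhere, but on a set of almost full measure.

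We would build $u$ as the $C^1$-limit of smooth maps $w_0\equiv 0,w_1,w_2,\dots$, together with ``good sets'' $\mathfrak C_j\subseteq\Omega$ on which $\sup|Dw_j-F(\cdot,w_j)|\le\theta_j$, with $\theta_j\downarrow 0$ and $\Leb^k(\Omega\setminus\mathfrak C_j)\le\varepsilon\,\Leb^k(\Omega)$ for every $j$. The elementary building block is classical: on a cube $Q$, given a matrix $A$ and $\epsilon>0$, there is a smooth $\psi$ supported in $Q$, with $\|\psi\|_\infty$ as small as desired, such that $D\psi\equiv A$ on a subset of $Q$ of measure $\ge(1-\epsilon)\Leb^k(Q)$ while $|D\psi|\lesssim\epsilon^{-1}|A|$ throughout $Q$ — a mollified ridge function, the mean-value constraint forcing the gradient to be large precisely on the exceptional $\epsilon$-fraction. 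At the $(j{+}1)$-st step I would dyadically subdivide $\Omega$ into cubes $Q_\ell$ so fine that $x\mapsto F(x,w_j(x))$ oscillates by less than $\theta_{j+1}$ on each $Q_\ell$, and add on each $Q_\ell$ a building block steering $Dw_j$ toward the frozen target $F(x_\ell,w_j(x_\ell))$ on all but an $\epsilon_{j+1}$-fraction of $Q_\ell$; on the current good set the errors are of size $\theta_j$, so the corrections there are $C^0$-small, while on the rest they are larger; the new exceptional set has measure $\le\varepsilon\,\Leb^k(\Omega)$, and $\mathfrak C_{j+1}$ is its complement.

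The implicit dependence is handled by ordering the parameters so that each tail $\sum_{i>j}\|w_{i+1}-w_i\|_\infty$ is much smaller than $\theta_j/\Lip(F)$: then the frozen targets differ from the true values $F(\cdot,u)$ by $O\big(\theta_j+\Lip(F)\sum_{i>j}\|w_{i+1}-w_i\|_\infty\big)=o(1)$, so letting $\theta_j\downarrow 0$ forces $Du(x)=F(x,u(x))$ at every $x$ lying in $\mathfrak C_i$ for infinitely many $i$; by Fatou's lemma $\limsup_j\mathfrak C_j$ has measure $\ge(1-\varepsilon)\Leb^k(\Omega)$, so (running the scheme with $\varepsilon/2$ in place of $\varepsilon$) inner regularity yields a compact $\mathfrak C$ with $\Leb^k(\Omega\setminus\mathfrak C)\le\varepsilon\,\Leb^k(\Omega)$ and $Du=F(\cdot,u)$ on $\mathfrak C$, while $\|u\|_\infty\le\sum_j\|w_{j+1}-w_j\|_\infty\le\vartheta$ gives (ii). The main obstacle is the regularity. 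A one-shot correction of the full error produces, on cubes of side $r_j\to 0$, a correction of $C^1$-size $\sim\epsilon_j^{-1}$, compatible with $C^1$ but with no Hölder exponent near $1$; following Balogh I would instead correct only a vanishing fraction of the error at each step and spread each correction over a huge number of geometrically decaying scales, arranging that the piece living at scale $r$ has second derivatives bounded by $r^{-\sigma_j}$ with $\sigma_j\downarrow 0$. Then, for any fixed $\alpha\in(0,1)$, once $\sigma_j<1-\alpha$ the scale-$r_j$ contributions $\sum_j r_j^{\,1-\alpha-\sigma_j}$ to the $C^{1,\alpha}$-seminorm converge, so that $u\in C^{1,\alpha}(\Omega)$ simultaneously for every $\alpha<1$. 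Reconciling this multi-scale regularization with the measure bookkeeping for the good sets — each step's choice of scales, fractions, and tolerances constraining the next — is where essentially all the difficulty lies.
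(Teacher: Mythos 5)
The paper does not prove this proposition: it is stated as an imported result, cited from \cite{AMMFrobenius3}, so there is no internal argument in this paper against which to check your sketch in detail. That said, the route you outline --- reducing to bounded globally Lipschitz $F$ on the tube $\Omega\times\overline{B(0,\vartheta)}$, building $u$ as a $C^1$-limit of smooth approximants by adding mollified sawtooth blocks that realize a frozen target gradient on most of each dyadic cube, controlling the implicit $u$-dependence by keeping each tail $\sum_{i>j}\|w_{i+1}-w_i\|_\infty$ small compared with the $j$-th tolerance, and spreading each correction over geometrically decaying scales with Hölder defects $\sigma_j\downarrow 0$ so as to land simultaneously in every $C^{1,\alpha}$ with $\alpha<1$ --- is precisely the Alberti--Balogh ``wiggling'' construction of \cite{zbMATH00021945} and \cite{Balogh2003SizeGradient} adapted to the parametric relation $Du=F(\cdot,u)$, and is the expected content of the cited reference. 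You have correctly identified where the genuine difficulty sits, namely in reconciling the multi-scale regularization with the measure bookkeeping for the good sets, and your sketch is consistent with the statement; the one caveat is that this reconciliation is exactly the part you leave as a black box, so the proposal should be read as a faithful outline rather than a self-contained proof.
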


\begin{proposition}
If the metric $d$ is $\eta$-squeezed on $V$ for some $\eta\in (1,2)$, then there exists a Lipschitz function 
$f:K\to (\mathbb{R}^n,d)$
with $K\Subset \mathbb{R}^k$, such that
\[
\Haus^k_{d}\bigl(f(K)\bigr)>0.
\]
On the other hand, if $\eta=2$ and $V$ is non-involutive, then the metric space $(\mathbb{R}^n,d)$ is $k$-purely unrectifiable.
\end{proposition}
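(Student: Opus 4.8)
For the first claim ($\eta\in(1,2)$) the plan is to build, via the Lusin-type Proposition~\ref{lusin}, a $C^{1,\alpha}$ graph tangent to $V$ on a set of positive measure, and then to check that it is Lipschitz as a map from $(\R^k,|\cdot|)$ into $(\R^n,d_{V,\eta})$ with positive image measure; since all $\eta$-squeezed metrics on $V$ are locally equivalent (Proposition~\ref{prop:equiv}), it suffices to do this for $d_{V,\eta}$. Working near a point $p_0$ in coordinates with $V(p_0)=\R^k\times\{0\}$, on a small cube the plane $V(x',x'')$ is the graph of a matrix $M(x',x'')$, and $F:=M$ is locally Lipschitz since $V$ is smooth; Proposition~\ref{lusin} then yields a compact $\mathfrak C$ with $\Leb^k(\mathfrak C)>0$ and $u\in C^{1,\alpha}$ (any $\alpha\in(0,1)$), with $\|u\|_\infty$ small enough that $\Phi(x'):=(x',u(x'))$ stays in the chart, and $Du(x')=M(\Phi(x'))$ on $\mathfrak C$. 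The key point is that $Du(x')=M(\Phi(x'))$ means $(v,Du(x')[v])\in V(\Phi(x'))$ for all $v$, so $\Pi^W_{\Phi(x')}$ kills it; together with the $C^{1,\alpha}$ estimate for $u$ and the smoothness of $x\mapsto\Pi^W_x$, this gives $|\Pi^W_{\Phi(x')}[\Phi(y')-\Phi(x')]|\le C|y'-x'|^{1+\alpha}$ and $|\Pi^V_{\Phi(x')}[\Phi(y')-\Phi(x')]|\le C|y'-x'|$ for $x'\in\mathfrak C$ and $y'$ nearby. Estimating $\ell_\eta$ of the straight segment from $\Phi(x')$ to $\Phi(y')$ as in the proof of Proposition~\ref{prop:loc:equiv} gives $d_{V,\eta}(\Phi(x'),\Phi(y'))\le C(|y'-x'|+|y'-x'|^{(1+\alpha)/\eta})$, which is $\le C'|y'-x'|$ once $\alpha$ is chosen in $[\eta-1,1)$ --- possible exactly because $\eta<2$. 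Restricting to $K:=\mathfrak C\cap B(x_0,\rho)$ around a density point $x_0$ of $\mathfrak C$, with $\rho$ small, one gets a Lipschitz $f:=\Phi|_K$ with $\Leb^k(K)>0$.

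To see $\Haus^k_{d_{V,\eta}}(f(K))>0$ I would exploit the projection $\pi\colon\R^n\to\R^k$ onto the first $k$ coordinates, which satisfies $\pi\circ\Phi=\mathrm{id}$. Writing $\pi=\pi\circ\Pi^V_{\Phi(x')}+\pi\circ\Pi^W_{\Phi(x')}$ and invoking the lower bound of Proposition~\ref{prop:loc:equiv} --- which for $x',y'\in K$ close forces $|\Pi^W_{\Phi(x')}[\Phi(y')-\Phi(x')]|\le(C\,d_{V,\eta}(\Phi(x'),\Phi(y')))^\eta\le d_{V,\eta}(\Phi(x'),\Phi(y'))$ since $\eta>1$, and $|\Pi^V_{\Phi(x')}[\Phi(y')-\Phi(x')]|\le C\,d_{V,\eta}(\Phi(x'),\Phi(y'))$ --- one obtains $|y'-x'|\le C\,d_{V,\eta}(f(x'),f(y'))$. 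Hence $\pi|_{f(K)}\colon(f(K),d_{V,\eta})\to(K,|\cdot|)$ is Lipschitz, so $\Haus^k_{d_{V,\eta}}(f(K))\ge C^{-k}\Haus^k_{|\cdot|}(K)>0$, and by local equivalence $\Haus^k_d(f(K))>0$.

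For the second claim ($\eta=2$, $V$ non-involutive) the plan is simply to reduce to Theorem~\ref{TH:UN}. The standing assumption that the degree-$2$ commutators $X_{k+1},\dots,X_n$ complement $V$ is exactly the (step-$2$) Hörmander condition, and since $k<n$ this already forces $V$ to be non-involutive; because the only $k$-dimensional sub-distribution of $V$ is $V$ itself, "$V$ non-involutive" coincides with "$V$ $k$-non-involutive", so the smallest $h$ with $V$ $h$-non-involutive satisfies $1<h\le k$. By Proposition~\ref{inclusioncc} the \cc metric $d_V$ is $2$-squeezed on $V$, hence locally equivalent to $d$ by Proposition~\ref{prop:equiv}; consequently a $d$-Lipschitz map $f\colon K\Subset\R^k\to(\R^n,d)$ is, on the compact set $K$, also Lipschitz into $(\R^n,d_V)$, and $\Haus^k_d$ and $\Haus^k_{d_V}$ have the same null sets inside the compact $f(K)$. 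Theorem~\ref{TH:UN} with $m=k\ge h$ then gives $\Haus^k_{d_V}(f(K))=0$, hence $\Haus^k_d(f(K))=0$, and the arbitrariness of $f$ yields $k$-pure unrectifiability of $(\R^n,d)$.

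I expect the main obstacle to be the exponent bookkeeping in the first part: the graph produced by Proposition~\ref{lusin} is tangent to $V$ only up to an error of order $|y'-x'|^{1+\alpha}$ in the $W$-directions, and it becomes $d_{V,\eta}$-Lipschitz precisely when $1+\alpha\ge\eta$; the strict inequality $\eta<2$ is exactly what makes room for such an $\alpha\in(0,1)$. In the borderline case $\eta=2$ one would need $\alpha=1$, i.e.\ a genuine $C^{1,1}$ graph, and Proposition~\ref{prop:unrectC1,1} (equivalently, the reduction to Theorem~\ref{TH:UN} above) shows that such a graph carries no $\Haus^k_d$-mass --- which is why $\eta=2$ is the threshold.
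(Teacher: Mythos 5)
Your proposal is correct, and the two halves compare differently with the paper's proof.

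For the first claim ($\eta\in(1,2)$) you follow essentially the same route as the paper: apply Proposition~\ref{lusin} to get a $C^{1,\alpha}$ graph tangent to $V$ on a set of positive measure, then verify Lipschitzness with respect to $d_{V,\eta}$. The paper disposes of the Lipschitz check with the remark that ``it can be easily checked,'' whereas you carry out the exponent bookkeeping ($1+\alpha\ge\eta$, hence $\alpha\in[\eta-1,1)$, possible precisely when $\eta<2$) and also supply the argument for positivity of the image measure via the coordinate projection $\pi$ and Proposition~\ref{prop:anis}. These are the details the paper omits, and your treatment is the right way to fill them in.

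For the second claim ($\eta=2$) you take a genuinely different route. The paper argues ``as in Proposition~\ref{whitney}'' that $f(K)$ is $C^{1,1}$-rectifiable and then invokes an external result of Balogh on tangency sets of $C^2$ surfaces; this gives Euclidean $\Haus^k$-nullity only, and the promotion to $\Haus^k_d$-nullity (the delicate covering step carried out in the proof of Theorem~\ref{TH:UN}) is left implicit. You instead observe that the standing assumption in this section is exactly a step-$2$ H\"ormander condition, so Proposition~\ref{inclusioncc} makes $d_V$ a $2$-squeezed metric, Proposition~\ref{prop:equiv} gives local equivalence of $d$ and $d_V$ on compacta, and Theorem~\ref{TH:UN} with $m=k$ finishes the argument. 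This reduction is cleaner and more self-contained than the paper's: it avoids the external reference, stays entirely inside the machinery already developed, and in particular inherits the careful intrinsic-Hausdorff-measure promotion already proved in Theorem~\ref{TH:UN} rather than eliding it.
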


\begin{proof}
Pick any point $z\in \mathbb{R}^n$ and, without loss of generality, assume that
\[
V(z)=\mathrm{span}(e_1,\ldots,e_k)\cong \mathbb{R}^k.
\]
Choose $\rho>0$ small enough so that for every $w\in B(z,\rho)$ there exists a map
\[
F:B(z,\rho)\to \mathbb{R}^{k\times (n-k)}
\]
with the property that
$\operatorname{gr}\bigl(F(w)\bigr)=V(w)$.
Then, by Proposition~\ref{lusin}, there exists a compact set $C\subset \mathbb{R}^k$ of positive Lebesgue measure and a function
$f:C\to \mathbb{R}^{n-k}$
such that the graph of $f$, $\operatorname{gr}(f)$, is compactly contained in $B(z,\rho)$ and 
\[
\operatorname{Tan}\bigl(\operatorname{gr}(f),w\bigr)=V(w)
\]
for $\Haus^k$-almost every $w\in \operatorname{gr}(f)$. It can be easily checked that such function $u$ is Lipschitz (as here is supposed strictly smaller than $2$) for the $\eta$-squeezed metric $d$ thus yielding the first part of the proposition. 

If $\eta=2$, one can argue as in Proposition~\ref{whitney} to deduce that any Lipschitz image of a $k$-dimensional compact set is $C^{1,1}$-rectifiable. However, if $V$ is non-involutive, then by \cite[Theorem~1.3]{Balogh2011SizeOT} the set where a $C^2$ surface is tangent to a non-involutive distribution of dimension $k$ is $\Haus^k$-null. This implies that $(\mathbb{R}^n,d)$ is $k$-purely unrectifiable.
\end{proof}

\end{parag}

    %
    %	BIBLIOGRAPHY
    %
    %
\printbibliography

    %
    %
    %	AFFILIATIONS
    %
    %
\vskip .5 cm
{\parindent = 0 pt\footnotesize
G.A.
\par
\smallskip
Dipartimento di Matematica,
Universit\`a di Pisa
\par
largo Pontecorvo 5,
56127 Pisa,
Italy
\par
\smallskip
e-mail: \texttt{giovanni.alberti@unipi.it}

\bigskip
A.Ma.
\par
\smallskip

Dipartimento di Matematica ``Tullio Levi-Civita'',
Universit\`a di Padova
\par
via Trieste 63, 
35121 Padova,
Italy
\par
\smallskip
e-mail: \texttt{annalisa.massaccesi@unipd.it}

\bigskip
A.Me.
\par
\smallskip
Universidad del Pais Vasco (UPV/EHU),
\par
Barrio Sarriena S/N 48940 Leioa, Spain.
\par
\smallskip
e-mail: \texttt{andrea.merlo@ehu.eus}
\par
}

\end{document}